\definecolor{mylinkcolor}{rgb}{0.8,0,0}
\definecolor{myurlcolor}{rgb}{0,0,0.8}
\definecolor{mycitecolor}{rgb}{0,0,0.8}
\newtheorem{defn}{Definition}[section]
\newtheorem{lemma}[defn]{Lemma}
\newtheorem{thm}[defn]{Theorem}
\newtheorem{theorem}[defn]{Theorem}
\newtheorem{cor}[defn]{Corollary}
\newtheorem{prop}[defn]{Proposition}
\newtheorem{proposition}[defn]{Proposition}
\theoremstyle{definition}
\newtheorem{remark}[defn]{Remark}
\newtheorem{question}[defn]{Question}
\newtheorem{example}[defn]{Example}
\newcommand{\QQ}{\mathbb Q}
\newcommand{\Qbar}{\overline{\QQ}}
\newcommand{\ZZ}{\mathbb Z}
\newcommand{\FF}{\mathbb F}
\newcommand{\PP}{\mathbb P}
\newcommand{\Gal}{\operatorname{Gal}}
\newcommand{\Aut}{\operatorname{Aut}}
\newcommand{\GL}{\operatorname{GL}}
\renewcommand{\Im}{\operatorname{Im}}
\newcommand{\tor}{\mathrm{tors}}
\begin{document}



\bibliographystyle{plain}
\title[Torsion over $\QQ(D_4^\infty)$]{Torsion subgroups of rational elliptic curves over the compositum of all $D_4$ extensions of the rational numbers}

\author{Harris B. Daniels}
\address{Department of Mathematics and Statistics, Amherst College, Amherst, MA 01002, USA}
\email{hdaniels@amherst.edu}
\urladdr{http://hdaniels.people.amherst.edu}

\keywords{Elliptic Curve, Torsion Points, Galois Theory}

\subjclass[2010]{Primary: 11G05, Secondary: 11R21, 12F10, 14H52.}

\begin{abstract}
Let $E/\QQ$ be an elliptic curve and let $\QQ(D_4^\infty)$ be the compositum of all extensions of $\QQ$ whose Galois closure has Galois group isomorphic to a quotient of a subdirect product of a finite number of transitive subgroups of $D_4$. In this article we first show that $\QQ(D_4^\infty)$ is infact the compostium of all $D_4$ extensions of $\QQ$ and then we prove that the torsion subgroup of $E(\QQ(D_4^\infty))$ is finite and determine the 24 possibilities for its structure. We also give a complete classification of the elliptic curves that have each possible torsion structure in terms of their $j$-invariants.
\end{abstract}

\maketitle

\section{Introduction}

A fundamental theorem in arithmetic geometry known as the Mordell--Weil theorem says that the rational points on an elliptic curve defined over a number field can be given the algebraic structure of a finitely generated abelian group. More specifically, if $K$ is a number field and $E/K$ is an elliptic curve then the set of $K$-rational points $E(K)$ is isomorphic to a group of the form $\ZZ^r\oplus E(K)_\tor$ where $r$ is a nonnegative integer and $E(K)_\tor$ is a finite abelian group called the {\it{torsion subgroup of $E$ over $K$}}. In fact, as long as the base field $K$ is a number field, the torsion subgroup of an elliptic curve is always isomorphic to a group of the form $\ZZ/a\ZZ\oplus \ZZ/ab\ZZ$ for some positive integers $a$ and $b$. Merel, in \cite{merel} proved the existence of a uniform bound on the size of $E(K)_\tor$ that depends only on the degree of the extension $K/\QQ$. In light of this result it is natural to ask the following question. 

\begin{question}\label{question:torsionsubgroups}
For a fixed $d\geq 1$, what groups (up to isomorphism) arise as the torsion subgroup of an elliptic curve over a number field of degree $d$?
\end{question}

The following theorems give a complete answer to Question \ref{question:torsionsubgroups} when $d=1$ and $2$.

\begin{thm}[Mazur \cite{mazur1}]\label{thm:mazur}
Let $E/\QQ$ be an elliptic curve. Then
\[
E(\QQ)_\tor \simeq \begin{cases}
 \ZZ/M\ZZ  &   1\leq M \leq 10 \hbox{ or } M=12, \hbox{ or}\\
\ZZ/2\ZZ\oplus\ZZ/2M\ZZ  &  1\leq M\leq 4. 
 \end{cases}
\]
\end{thm}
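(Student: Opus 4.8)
The plan is to recast the classification as a problem about $\mathbb{Q}$-rational points on modular curves. The key observation is that a pair $(E,P)$, where $E/\mathbb{Q}$ is an elliptic curve and $P \in E(\mathbb{Q})$ has exact order $N$, corresponds to a non-cuspidal rational point on the modular curve $X_1(N)$; likewise, torsion of the form $\mathbb{Z}/2\mathbb{Z}\oplus\mathbb{Z}/2M\mathbb{Z}$ corresponds to a rational point on the curve parametrizing that level structure. Since $E(\mathbb{Q})_{\mathrm{tors}}\simeq \mathbb{Z}/a\mathbb{Z}\oplus\mathbb{Z}/ab\mathbb{Z}$ and a point of order $N$ produces a point of order $\ell^k$ for every prime power $\ell^k \mid N$, it suffices to bound the cyclic prime-power torsion together with the $2$-part carrying full $2$-torsion. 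The theorem is then equivalent to deciding, for each relevant $N$, whether the associated modular curve has any rational point that is not a cusp.

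First I would dispose of the positive direction, namely that each of the listed groups genuinely occurs. For $N \in \{1,\dots,10,12\}$ and for the full-$2$-torsion cases with $M \le 4$, the relevant modular curve has genus $0$ and carries a rational cusp, so it is $\mathbb{Q}$-isomorphic to $\mathbb{P}^1$. Each such curve therefore has infinitely many rational points, and specializing the universal object (for instance via Tate normal forms) produces explicit one-parameter families of elliptic curves realizing the prescribed torsion. This step is purely computational.

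The real content is the negative direction. The borderline levels $N = 11, 14, 15$ give modular curves of genus $1$: here I would identify $X_1(N)$ with an explicit elliptic curve over $\mathbb{Q}$, show by a $2$- or $3$-descent that it has Mordell--Weil rank $0$, and then verify that all of its finitely many rational points are cusps, so that no elliptic curve acquires the corresponding torsion. The levels $N = 13, 16, 18$ yield genus-$2$ curves, where the same philosophy applies but one must instead analyze the Mordell--Weil group of the Jacobian and invoke a Chabauty-type or explicit argument to pin down the rational points.

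The decisive obstacle is the remaining prime and prime-power levels --- in particular $N = p$ for $p = 17, 19$ and for all primes $p \ge 23$ --- where $X_1(p)$ has large genus and no elementary descent is available because one cannot bound the rank a priori. This is exactly where Mazur's \emph{Eisenstein ideal} method enters: one studies the Jacobian $J_0(p)$, forms its Eisenstein quotient, and proves that this quotient has finite Mordell--Weil group over $\mathbb{Q}$; combined with a careful analysis of the reduction of $X_0(p)$ modulo auxiliary primes and of the cuspidal divisor class group, this forces every rational point to be a cusp. I expect this final step to be by far the hardest part of the argument, since it relies on the deep arithmetic of modular Jacobians rather than on any finite, mechanical computation.
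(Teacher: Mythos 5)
A preliminary remark: the paper does not prove this statement at all --- it is quoted as known background, attributed to Mazur \cite{mazur1} --- so there is no internal proof to compare yours against. Judged against the proof that exists in the literature, your outline reproduces its architecture faithfully: the dictionary between torsion pairs $(E,P)$ and noncuspidal rational points on $X_1(N)$, the genus-zero existence argument via Tate normal forms, rank-zero descent on the genus-one curves $X_1(11)$, $X_1(14)$, $X_1(15)$, Jacobian arguments for the genus-two curves $X_1(13)$, $X_1(16)$, $X_1(18)$, and Mazur's Eisenstein-quotient machinery for prime levels. Naming that last step is of course not the same as carrying it out (it \emph{is} the theorem, in effect), but as a blind sketch the skeleton is the correct one.

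There is, however, a genuine logical gap in your reduction step. It is \emph{not} sufficient to ``bound the cyclic prime-power torsion together with the $2$-part carrying full $2$-torsion'': knowing which prime powers $\ell^k$ occur as orders of rational points does not control composite orders built from coprime allowed factors. The cleanest counterexample is $N=35$: points of order $5$ and of order $7$ both occur over $\QQ$, yet $\ZZ/35\ZZ$ must be excluded, and nothing in your plan excludes it. The same applies to $N=20$, $21$, $24$ (and to $15$, which you do treat even though your stated reduction says you need not --- a sign the reduction is misstated), to the prime powers $25$, $27$, $49$, which never appear in your list, and to the groups $\ZZ/2\ZZ\oplus\ZZ/10\ZZ$ and $\ZZ/2\ZZ\oplus\ZZ/12\ZZ$. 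A complete argument must show that the noncuspidal rational points are absent precisely for the minimal excluded levels $N\in\{11,13,14,15,16,18,20,21,24,25,27,35,49\}$ together with all primes $N\ge 17$, plus the two full-$2$-torsion cases; the levels beyond the six you name were handled historically by Levi, Lind, Ogg, Kubert and others, and your plan is silent on all of them. (You should also record why only $\ZZ/2\ZZ\oplus\ZZ/2M\ZZ$ occurs among non-cyclic groups: the Weil pairing forces $\zeta_a\in\QQ$, hence $a\le 2$.) Finally, one inaccuracy in your description of the hard step: the Eisenstein-ideal argument cannot force every rational point of $X_0(p)$ to be a cusp, since $X_0(p)(\QQ)$ has noncuspidal points for $p=11,17,19,37,43,67,163$; what Mazur's method yields, after further work, is that statement for $X_1(p)$.
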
 
\begin{thm}[Kenku, Momose \cite{kenmom}, Kamienny \cite{kamienny}]\label{thm:quadgroups}
Let $E/F$ be an elliptic curve over a quadratic number field $F$. Then
\[
E(F)_\tor\simeq
\begin{cases}
\ZZ/M\ZZ &\text{with}\ 1\leq M\leq 16\ \text{or}\ M=18,\ \text{or}\\
\ZZ/2\ZZ\oplus \ZZ/2M\ZZ &\text{with}\ 1\leq M\leq 6,\ \text{or}\\
\ZZ/3\ZZ \oplus \ZZ/3M\ZZ &\text{with}\ M=1\ \text{or}\ 2,\ \text{only if}\ F = \QQ(\sqrt{-3}),\ \text{or}\\
\ZZ/4\ZZ \oplus \ZZ/4\ZZ &\text{only if}\ F = \QQ(\sqrt{-1}).
\end{cases}
\]
\end{thm}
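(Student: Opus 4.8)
The plan is to recast the classification as a statement about points of degree at most two on the modular curves $X_1(M,N)$, which parametrize pairs consisting of an elliptic curve $E$ together with a torsion subgroup isomorphic to $\ZZ/M\ZZ\oplus\ZZ/N\ZZ$ (with $M\mid N$). A group $G$ arises as $E(F)_{\tor}$ for some elliptic curve over some quadratic field $F$ precisely when the associated modular curve carries a non-cuspidal point defined over a quadratic field. Since Mazur's Theorem~\ref{thm:mazur} already records the rational possibilities, the whole problem reduces to understanding the genuinely quadratic points, and I would split the work into an \emph{achievability} direction (every listed group occurs) and a \emph{sharpness} direction (nothing else occurs).

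For achievability the guiding principle is that a curve of gonality at most two over $\QQ$ automatically has infinitely many quadratic points, obtained by pulling back the rational points of $\PP^1$ under the degree-two map. I would compute the genus and gonality of $X_1(N)$ for each relevant $N$ and check that the curve is rational, of genus one with a rational point, or hyperelliptic exactly for the cyclic values $N\le 16$ and $N=18$, and similarly treat the mixed families $\ZZ/2\ZZ\oplus\ZZ/2M\ZZ$ with $M\le 6$; in each case a quadratic point not lying above a cusp provides the desired elliptic curve. The two exceptional families are controlled by the Weil pairing: if $E(F)\supseteq \ZZ/N\ZZ\oplus\ZZ/N\ZZ$ then $\QQ(\zeta_N)\subseteq F$, so for quadratic $F$ one needs $[\QQ(\zeta_N):\QQ]\le 2$. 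This forces $N=3$ with $F=\QQ(\sqrt{-3})$ and $N=4$ with $F=\QQ(\sqrt{-1})$, excludes full $N$-torsion for every larger $N$, and reduces the $\ZZ/3\ZZ\oplus\ZZ/3M\ZZ$ and $\ZZ/4\ZZ\oplus\ZZ/4\ZZ$ entries to an inspection of $X_1(3,3M)$ and $X_1(4,4)$ over these specific fields.

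The deep part is the sharpness direction for cyclic groups of large order, namely ruling out $\ZZ/N\ZZ$ for $N=17$, $N=19$, and all $N\ge 20$. Here I would follow Kamienny's extension of Mazur's formal-immersion method. A quadratic point on $X_1(N)$ produces a rational point on the symmetric square $\operatorname{Sym}^2 X_1(N)$, which maps to the Jacobian $J_1(N)$ by sending an unordered pair to the class of its divisor minus twice a rational cusp. Using that an Eisenstein or winding quotient of $J_1(N)$ has finite Mordell--Weil group over $\QQ$, one forces the resulting divisor class to be torsion; a formal-immersion argument at a prime of good reduction then shows that distinct non-cuspidal quadratic points cannot share a torsion class, so all quadratic points must be cusps and no elliptic curve acquires $N$-torsion. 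The crux --- and the main obstacle --- is verifying the formal-immersion hypothesis, that is, that a matrix built from the $q$-expansion coefficients of a basis of weight-two cusp forms for $\Gamma_1(N)$ has full rank modulo a carefully chosen small prime; establishing this rank condition, together with bounding the finitely many remaining $N$ by Merel-style arguments, is where the argument becomes genuinely delicate and must be handled case by case.
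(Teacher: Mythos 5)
This theorem is not proved in the paper at all: it is quoted as known background, with the proof residing in the cited works of Kenku--Momose and Kamienny. So there is no internal proof to compare your attempt against; the fair comparison is with those references. Measured against them, your outline reconstructs the correct architecture: translation into quadratic points on $X_1(N)$ and $X_1(M,N)$, gonality/hyperelliptic arguments for existence of the listed groups, the Weil pairing to pin down the special fields $\QQ(\sqrt{-3})$ and $\QQ(\sqrt{-1})$, and Kamienny's formal-immersion method on $\operatorname{Sym}^2 X_1(N)$, mapped to a finite Mordell--Weil quotient of the Jacobian, to kill large cyclic torsion. That is indeed how the classification is established in the literature.

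Two concrete gaps remain. First, your Weil-pairing step is stated too strongly: since $[\QQ(\zeta_6):\QQ]=2$, full $6$-torsion over $\QQ(\sqrt{-3})$ is \emph{not} excluded by the pairing, so $\ZZ/6\ZZ\oplus\ZZ/6\ZZ$ must be ruled out by a separate modular-curve argument; likewise the pairing only restricts, and does not finish, the families $\ZZ/3\ZZ\oplus\ZZ/3M\ZZ$ and $\ZZ/4\ZZ\oplus\ZZ/4M\ZZ$, so you still must exclude, for example, $\ZZ/3\ZZ\oplus\ZZ/9\ZZ$ over $\QQ(\sqrt{-3})$ and $\ZZ/4\ZZ\oplus\ZZ/8\ZZ$ over $\QQ(\sqrt{-1})$, each of which survives both your Weil-pairing and your cyclic constraints. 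Second, your sharpness direction addresses only the cyclic groups $\ZZ/N\ZZ$; the exclusions $\ZZ/2\ZZ\oplus\ZZ/2M\ZZ$ for $M\geq 7$ require their own analysis of quadratic points on $X_1(2,2M)$ (note that $\ZZ/2\ZZ\oplus\ZZ/14\ZZ$ and $\ZZ/2\ZZ\oplus\ZZ/16\ZZ$ contain only cyclic subgroups that your cyclic bound permits, so nothing you wrote rules them out). This case-by-case work on the mixed structures, together with the formal-immersion rank verification you explicitly defer, is precisely the substance of the Kenku--Momose paper: as an outline your proposal is faithful to the literature, but these omissions are genuine pieces of the proof rather than routine details.
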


Recently, Etropolski, Morrow, and Zureick-Brown, (and independently Derickx) announced that they have found a complete classification of the torsion structures that occur for elliptic curves over cubic fields, giving an answer to Question \ref{question:torsionsubgroups} in the case when $d=3$. This answer comes after the work of Jeon, Kim, Lee, and Schweizer \cite{jeon1,jeon2} classifing all of the torsion structures of elliptic curves defined over cubic fields that occur infinitely often.


One way to simplify Question \ref{question:torsionsubgroups} is to restrict to the torsion subgroups of elliptic curves defined over $\QQ$ that have been base-extended to degree $d$ number fields. There are many results related to this question and below we present a few of them.

\begin{thm}\cite[Thm.~2]{najman}
\label{thm-najman1} Let $E/\QQ$ be an elliptic curve and let $F$ be a quadratic number field. Then
\[
E(F)_\tor\simeq
\begin{cases}
\ZZ/M\ZZ &\text{with}\ 1\leq M\leq 10 \text{ or } M = 12,15,16,\ \text{or}\\
\ZZ/2\ZZ\oplus \ZZ/2M\ZZ &\text{with}\ 1\leq M\leq 6,\ \text{or}\\
\ZZ/3\ZZ \oplus \ZZ/3M\ZZ &\text{with}\ 1\leq M\leq 2 \text{ and }F = \QQ(\sqrt{-3}),\ \text{or}\\
\ZZ/4\ZZ \oplus \ZZ/4\ZZ &\text{with}\ F = \QQ(\sqrt{-1}).
\end{cases}
\]

\end{thm}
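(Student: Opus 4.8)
The plan is to take the Kenku--Momose--Kamienny classification (Theorem~\ref{thm:quadgroups}) as the ambient list of candidate groups and then cut it down using the hypothesis that $E$ is defined over $\QQ$, so that Mazur's theorem (Theorem~\ref{thm:mazur}) applies to $E$ and to its quadratic twists. Writing $F=\QQ(\sqrt d)$ and letting $E^d$ denote the quadratic twist of $E$ by $d$, the central tool is the twist decomposition: for every odd integer $m$ the map $P\mapsto(P+\sigma P,\,P-\sigma P)$, with $\sigma$ the nontrivial element of $\Gal(F/\QQ)$, induces an isomorphism $E(F)[m]\cong E(\QQ)[m]\oplus E^d(\QQ)[m]$, since $2$ is invertible modulo $m$. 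Consequently the odd part of $E(F)_\tor$ is a direct sum of the odd parts of the torsion of two \emph{rational} elliptic curves, each of which is cyclic of order in $\{1,3,5,7,9\}$ by Theorem~\ref{thm:mazur}.

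The second tool I would record is a lemma on $2$-torsion: a $2$-torsion point of $E$ is cut out by a root of the cubic $2$-division polynomial, so if it is defined over the quadratic field $F$ then that cubic has a root in $F$ and hence a rational root; thus $E(F)[2]\ne 0$ forces $E(\QQ)[2]\ne 0$. Since $E$ and $E^d$ have the same $x$-coordinates of $2$-torsion, the same statement holds for $E^d$. With these two tools the offending groups are removed directly. For $\ZZ/11\ZZ$ and $\ZZ/13\ZZ$ the odd-prime twist decomposition produces a rational $11$- or $13$-torsion point, contradicting Theorem~\ref{thm:mazur}. For $\ZZ/14\ZZ\cong\ZZ/2\ZZ\oplus\ZZ/7\ZZ$ and $\ZZ/18\ZZ\cong\ZZ/2\ZZ\oplus\ZZ/9\ZZ$ the twist decomposition places a rational $7$- or $9$-torsion point on $E$ or on $E^d$, while the presence of $2$-torsion over $F$ places a rational $2$-torsion point on that same curve; this yields $\ZZ/14\ZZ$ or $\ZZ/18\ZZ$ inside $E(\QQ)_\tor$ or $E^d(\QQ)_\tor$, again contradicting Mazur.

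To pin down the two sporadic field restrictions I would use the Weil pairing: if $E(F)$ contains $E[p]\cong\ZZ/p\ZZ\oplus\ZZ/p\ZZ$ for an odd prime $p$, nondegeneracy of $e_p$ forces $\mu_p\subseteq F$, so $\QQ(\zeta_p)\subseteq F$ and hence $p=3$ with $F=\QQ(\sqrt{-3})$; likewise $E[4]\subseteq E(F)$ forces $i\in F$, that is $F=\QQ(\sqrt{-1})$. This accounts for the $\ZZ/3\ZZ\oplus\ZZ/3M\ZZ$ and $\ZZ/4\ZZ\oplus\ZZ/4\ZZ$ rows together with their field conditions. Finally I would confirm that every remaining structure is actually attained, by exhibiting explicit base-changed rational curves (found via the relevant modular-curve parametrizations or a systematic search) realizing each group over a suitable quadratic field.

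I expect the main obstacle to be the $2$-primary part: the twist decomposition is useless at $p=2$ because $+1$ and $-1$ coincide on $E[2]$, so the clean eigenspace splitting that controls the odd torsion is unavailable. Distinguishing which of $\ZZ/16\ZZ$, $\ZZ/2\ZZ\oplus\ZZ/8\ZZ$, and $\ZZ/4\ZZ\oplus\ZZ/4\ZZ$ can occur, and over which fields, is exactly where one must lean hardest on the ambient bound from Theorem~\ref{thm:quadgroups}, and, for sharpness, on an honest construction of examples, rather than on the elementary twist-and-Mazur argument that disposes of the odd part.
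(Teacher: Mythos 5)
The paper itself gives no proof of this statement --- it is quoted verbatim from Najman's paper \cite{najman} --- so there is no internal argument to compare against; I can only assess your proposal on its merits, and it is correct. Indeed, it is essentially the argument underlying the cited result: comparing the Kenku--Momose--Kamienny list (Theorem \ref{thm:quadgroups}) with the claimed list, the only groups that must be eliminated are $\ZZ/11\ZZ$, $\ZZ/13\ZZ$, $\ZZ/14\ZZ$, and $\ZZ/18\ZZ$, and your two tools do exactly this: the odd-order twist decomposition $E(F)[m]\simeq E(\QQ)[m]\oplus E^d(\QQ)[m]$ kills $11$ and $13$ via Mazur, while the cubic $2$-division-polynomial observation (rational $2$-torsion on $E$ if and only if on $E^d$) lets you place a full $\ZZ/14\ZZ$ or $\ZZ/18\ZZ$ on a single rational curve, again contradicting Mazur. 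Three small remarks. First, the Weil-pairing field restrictions you re-derive are already part of the quoted Kenku--Momose--Kamienny statement, so that step is redundant. Second, the statement as given only asserts that $E(F)_\tor$ lies in the list, so the existence step you defer (exhibiting curves realizing each group) is not logically required, though it is what makes the list sharp. Third, your closing worry about the $2$-primary part is moot for the same reason: the $2$-primary groups in Najman's list coincide exactly with those allowed by Kenku--Momose--Kamienny, so nothing needs to be eliminated at $p=2$ and the failure of the twist decomposition there costs nothing.
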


\begin{thm}\cite[Thm.~1]{najman}
\label{thm-najman2} Let $E/\QQ$ be an elliptic curve and let $F$ be a cubic number field. Then
\[
E(F)_\tor\simeq
\begin{cases}
\ZZ/M\ZZ &\text{with}\ 1\leq M\leq 10 \text{ or } M = 12,13,14,18,21,\  \text{or}\\
\ZZ/2\ZZ\oplus \ZZ/2M\ZZ &\text{with}\ 1\leq M\leq 4 \text{ or } M=7.
\end{cases}
\]
Moreover, the elliptic curve $162B1$ over $\QQ(\zeta_9)^+$ is the unique rational elliptic curve over a cubic field with torsion subgroup isomorphic to $\ZZ/21\ZZ$. For all other groups $T$ listed above there are infinitely many $\Qbar$-isomorphism classes of elliptic curves $E/\QQ$ for which $E(F)\simeq T$ for some cubic field $F$.
\end{thm}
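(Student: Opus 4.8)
The plan is to combine the constraints forced by $E$ being defined over $\QQ$ with the geometry of modular curves. The single most useful structural observation is the following: if $E(F)_\tor\simeq\ZZ/a\ZZ\oplus\ZZ/ab\ZZ$ then $E[a]\subseteq E(F)$, so $\GQ$ acts trivially on $E[a]$ and Galois‑equivariance of the Weil pairing forces $\QQ(\zeta_a)\subseteq F$. Hence $\varphi(a)=[\QQ(\zeta_a):\QQ]$ divides $[F:\QQ]=3$, and since $\varphi$ never takes the value $3$ we must have $\varphi(a)=1$, i.e. $a\in\{1,2\}$. This already explains the shape of the answer: over a cubic field every torsion group is either cyclic or of the form $\ZZ/2\ZZ\oplus\ZZ/2M\ZZ$, with none of the $\ZZ/3\ZZ\oplus\cdots$ or $\ZZ/4\ZZ\oplus\cdots$ possibilities that appeared in the quadratic case of Theorem~\ref{thm-najman1}. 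It remains to bound the cyclic part and the value of $M$.

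First I would bound the primes dividing $|E(F)_\tor|$. A point $P$ of order $p$ satisfies $[\QQ(P):\QQ]\mid 3$, so the line spanned by $P$ has a $\GQ$‑orbit of size dividing $3$, producing a point of degree at most $3$ on $X_0(p)$. If that line is Galois stable we get a rational $p$‑isogeny whose isogeny character has order dividing $3$, which by Mazur's isogeny theorem together with the constraint $3\mid p-1$ (needed for an order‑$3$ character in $(\ZZ/p\ZZ)^\ast$) leaves only $p=13$ beyond the primes already allowed over $\QQ$; if the line is not rational one instead bounds $p$ using the gonality of $X_0(p)$, which exceeds $3$ for all but finitely many $p$. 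Either way the admissible primes are $p\in\{2,3,5,7,13\}$. The prime‑power parts are pinned down the same way, by testing for cubic points with rational $j$‑invariant on $X_1(p^k)$ and on the curves parametrizing $\ZZ/2\ZZ\oplus\ZZ/2M\ZZ$; the degree bound then shows, for instance, that the $3$‑part is at most $\ZZ/9\ZZ$ and that the $7$‑part can be enlarged only by a single factor of $\ZZ/2\ZZ$ or $\ZZ/3\ZZ$. Assembling this with the $a\le 2$ constraint yields exactly the list in the statement.

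For the second assertion I would split the realizable groups into those already occurring over $\QQ$ (Theorem~\ref{thm:mazur}), for which base change to infinitely many cubic fields is automatic, and the genuinely cubic structures $\ZZ/13\ZZ$, $\ZZ/14\ZZ$, $\ZZ/18\ZZ$, and $\ZZ/2\ZZ\oplus\ZZ/14\ZZ$. For each of the latter I would exhibit an explicit one‑parameter family of rational curves $E_t$ together with cubic fields $F_t$ realizing the prescribed torsion; concretely this amounts to identifying the relevant component of the fiber product of the appropriate modular curve with the $j$‑line and checking that it carries infinitely many cubic points over rational $j$‑invariants, for example by producing a degree‑$3$ map to $\PP^1$ or to a positive‑rank elliptic quotient. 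Distinct values of $t$ then give infinitely many $\Qbar$‑isomorphism classes with the given cubic torsion.

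The main obstacle is the sporadic group $\ZZ/21\ZZ$. Here $X_1(21)$ has genus $5$, so one expects only finitely many cubic points, and the claim is that exactly one of them has rational $j$‑invariant, namely $162\mathrm{B}1$ over the real cubic field $\QQ(\zeta_9)^+$. Establishing this requires genuinely determining all cubic points on $X_1(21)$—equivalently, controlling the rational points of the relevant quotient or symmetric power via explicit Mordell--Weil and Chabauty‑type computations—and then isolating those with $j\in\QQ$. Proving that this point is truly isolated rather than the visible tip of an undetected infinite family, and confirming that no analogous sporadic example hides at the other admissible levels, is the delicate computational heart of the theorem; everything else is bookkeeping around the degree constraint and Mazur's isogeny bounds.
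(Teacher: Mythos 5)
This theorem is not proved in the paper at all: it is Najman's result, quoted with the citation \cite[Thm.~1]{najman}, so there is no internal proof to compare against and I can only judge your outline on its own terms. Your skeleton is sensible — the Weil-pairing argument that $E[a]\subseteq E(F)$ forces $\QQ(\zeta_a)\subseteq F$ and hence $a\le 2$ is correct and is the right first step — but your prime-bounding step contains a concrete error. You claim that Mazur's isogeny theorem (Theorem \ref{thm:IsoTypes}) together with the condition $3\mid p-1$ ``leaves only $p=13$.'' It does not: among the isogeny primes, $p=19, 37, 43, 67, 163$ all satisfy $3\mid p-1$, so your argument leaves every one of them on the table. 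Eliminating them requires a lower bound on the order of the isogeny character, equivalently on $[\QQ(R):\QQ]$ for $R$ a generator of the kernel; this is exactly what Proposition \ref{prop:FieldOfDef} (Lozano-Robledo) supplies, giving $[\QQ(R):\QQ]\ge (p-1)/2>3$ for $p\ge 11$, $p\neq 13$, and $\ge 12$ in the exceptional $p=37$ case. Your fallback in the case where the line $\langle P\rangle$ is not Galois-stable is also not a proof: a gonality bound on $X_0(p)$ only yields \emph{finiteness} of cubic points, whereas you must exclude even a single sporadic cubic point with rational $j$-invariant. The clean argument is different: if $\langle P\rangle$ is not stable, the Galois orbit of $P$ spans $E[p]$, so $\QQ(E[p])$ lies in the Galois closure of $F$, which has degree at most $6$; then $\QQ(\zeta_p)\subseteq\QQ(E[p])$ forces $p-1\le 6$.

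Beyond this, the sentence ``assembling this with the $a\le 2$ constraint yields exactly the list'' conceals most of the remaining content of the theorem: one must still rule out the composite and higher prime-power orders that your constraints permit a priori (for instance $\ZZ/15\ZZ$, $\ZZ/16\ZZ$, $\ZZ/25\ZZ$, $\ZZ/27\ZZ$, $\ZZ/26\ZZ$, $\ZZ/2\ZZ\oplus\ZZ/10\ZZ$, $\ZZ/2\ZZ\oplus\ZZ/12\ZZ$), and each exclusion is a separate modular-curve or isogeny-class analysis. Finally, for $\ZZ/21\ZZ$ you propose determining all cubic points on the genus-$5$ curve $X_1(21)$; that is far more than is needed and is where a blind attempt would likely stall. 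Since a rational curve acquiring a point of order $21$ over a cubic field must admit a rational $21$-isogeny (the $3$- and $7$-parts each give stable lines by the dichotomy above), there are only four $\Qbar$-isomorphism classes to examine (cf.\ \cite[Table 4]{lozano1}), and checking those four curves directly is how the uniqueness of \texttt{162B1} over $\QQ(\zeta_9)^+$ is actually established.
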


Another variant of this question would be to consider what torsion structures occur when base-extending an elliptic curve $E/\QQ$ to some fixed \emph{infinite} extension. Of course, since infinite extensions are not number fields the Mordell--Weil theorem does not immediately give us that the torsion subgroup an elliptic curve base-extended to an infinite extension is finite. Because of this we will need to carefully choose our infinite extensions so this question still has some content. 

\begin{defn}\label{def:Q(d)}
For each fixed integer $d\ge 1$, let $\QQ(d^\infty)$ denote the compositum of all field extensions $F/\QQ$ of degree $d$.
More precisely, let $\Qbar$ be a fixed algebraic closure of $\QQ$, and define
\[
\QQ(d^\infty) := \QQ\left(\{\beta\in \Qbar: [\QQ(\beta):\QQ]=d  \}\right).
\]
\end{defn}

The fields defined in Definition \ref{def:Q(d)} have been studied on their own by Gal and Grizzard \cite{grizzard}, and the torsion structures of the rational elliptic curves base-extended to $\QQ(2^\infty)$ and $\QQ(3^\infty)$ are fully classified.

\begin{thm}[Laska, Lorenz \cite{laska}, Fujita \cite{fujita1,fujita2}]\label{thm-fujita} Let $E/\QQ$ be an elliptic curve. The torsion subgroup $E(\QQ(2^\infty))_\tor$ is finite, and
\[
E(\QQ(2^\infty))_\tor\simeq
\begin{cases}
\ZZ/M\ZZ &\text{with}\ M\in 1,3,5,7,9,15,\ \text{or}\\
\ZZ/2\ZZ\oplus \ZZ/2M\ZZ &\text{with}\ 1\leq M\leq 6\ \text{or}\ M=8,\ \text{or}\\
\ZZ/3\ZZ\oplus \ZZ/3\ZZ & \text{or}\\
\ZZ/4\ZZ\oplus \ZZ/4M\ZZ &\text{with}\ 1\leq M\leq 4,\ \text{or}\\
\ZZ/2M\ZZ\oplus \ZZ/2M\ZZ &\text{with}\ 3\leq M\leq 4.\\
\end{cases}
\]
\end{thm}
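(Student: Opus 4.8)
The plan is to convert the statement ``$P$ is defined over $\QQ(2^\infty)$'' into a purely group-theoretic condition on the Galois image and then to run through the classification of these images one prime at a time. The starting point is that $\QQ(2^\infty)$ is Galois over $\QQ$ with $\Gal(\QQ(2^\infty)/\QQ)$ the maximal elementary abelian $2$-quotient of $\GQ$; equivalently, $\alpha\in\Qbar$ lies in $\QQ(2^\infty)$ exactly when the Galois closure of $\QQ(\alpha)/\QQ$ has Galois group an elementary abelian $2$-group (a \emph{multiquadratic} field). Writing $H=\Gal(\Qbar/\QQ(2^\infty))$, so that $H$ is the closed subgroup of $\GQ$ generated by squares and commutators, for each prime $\ell$ the $\ell$-adic image $G_\ell=\im\rho_{E,\ell^\infty}\subseteq\GL_2(\ZZ_\ell)$ satisfies $\rho_{E,\ell^\infty}(H)=N_\ell$, where $N_\ell$ is the closed subgroup generated by the squares and commutators of $G_\ell$, i.e.\ the kernel of the largest elementary abelian $2$-group quotient of $G_\ell$. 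Hence
\[
E(\QQ(2^\infty))[\ell^\infty]=E[\ell^\infty]^{\,N_\ell},
\]
and the whole problem reduces to computing these fixed subgroups prime by prime.

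Next I would establish finiteness together with the reduction to $\ell\in\{2,3,5,7\}$. Since $\QQ(2^\infty)$ is not a number field, Merel's bound does not apply directly, so instead I would use that for a non-CM curve Serre's open image theorem makes $G_\ell$ open in $\GL_2(\ZZ_\ell)$; then $N_\ell$ is again open, hence fixes only finitely many torsion points and none of $\ell$-power order once $\ell$ is large, and for the finitely many CM $j$-invariants a direct analysis of the image (contained in a Cartan normalizer) again gives finiteness. This already shows $E(\QQ(2^\infty))_\tor$ is finite. To pin down the contributing primes, note that $E[\ell^\infty]^{N_\ell}\neq 0$ forces $N_\ell$ to fix a line in $E[\ell]$; as $N_\ell$ is normal of $2$-power index, the $G_\ell$-orbit of that line has $2$-power size and the line is fixed pointwise by $N_\ell$, so $E$ acquires an $\ell$-isogeny over a multiquadratic field whose isogeny character has $2$-power order. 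Combining Mazur's classification of rational isogenies (applied over $\QQ$ and its quadratic subfields), which limits $\ell$ to $\{2,3,5,7,11,13,17,19,37,43,67,163\}$, with the observation that the determinant on $E[\ell]$ is the mod-$\ell$ cyclotomic character of order $\ell-1$, a direct inspection of the isogeny characters at the sporadic primes $\ell\ge 11$ shows none has $2$-power order, so they are eliminated.

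It then remains to compute $E[\ell^\infty]^{N_\ell}$ for $\ell\in\{2,3,5,7\}$ from the known list of admissible images $G_\ell$ for rational elliptic curves: for each image I would extract its maximal elementary abelian $2$-quotient, determine the fixed $\ell$-power torsion, and read off the resulting cyclic or bicyclic group. A central organizing constraint is the Weil pairing, which forces $\QQ(\zeta_m)\subseteq\QQ(2^\infty)$ whenever $E(\QQ(2^\infty))$ contains full $m$-torsion, i.e.\ $m\mid 24$; this at once rules out $\ZZ/5\oplus\ZZ/5$ and $\ZZ/7\oplus\ZZ/7$ while permitting $\ZZ/3\oplus\ZZ/3$, $\ZZ/4\oplus\ZZ/4$, $\ZZ/6\oplus\ZZ/6$, and $\ZZ/8\oplus\ZZ/8$. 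Finally I would assemble the $\ell$-parts into the global list, check the compatibility of simultaneous growth at distinct primes (the possible entanglements, which cut down the a priori combinations to the stated $\ZZ/a\oplus\ZZ/ab$ shapes), and exhibit an explicit rational curve realizing each resulting structure to prove that every group in the list genuinely occurs.

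The main obstacle, and the most computational part, is the analysis at $\ell=2$: because $\QQ(2^\infty)$ is built precisely out of quadratic extensions, the $2$-power torsion grows the most, and computing $E[2^\infty]^{\,N_2}$ requires a careful description of the $2$-adic image $G_2$, including the level at which it is determined and its entanglement with the fields $\QQ(\sqrt d)$. This is exactly what produces the delicate top of the list, such as $\ZZ/4\oplus\ZZ/16$ and $\ZZ/8\oplus\ZZ/8$, and separating the groups that genuinely occur from the near-misses (for instance showing that $\ZZ/2\oplus\ZZ/16$ appears but $\ZZ/2\oplus\ZZ/20$ does not, and $\ZZ/8\oplus\ZZ/8$ but not $\ZZ/12\oplus\ZZ/12$) is where I expect the bulk of the effort to lie.
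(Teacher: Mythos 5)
The paper contains no proof of Theorem \ref{thm-fujita}: it is imported from Laska--Lorenz \cite{laska} and Fujita \cite{fujita1,fujita2}, so your proposal can only be compared with the standard strategy, which is also the one this paper adapts to $\QQ(D_4^\infty)$ in Sections \ref{sec:GrowthOfTorsion}--\ref{sec:TorsionStructures}. Your skeleton is that strategy, correctly set up: identifying $\Gal(\Qbar/\QQ(2^\infty))$ with the closed subgroup of $\GQ$ generated by squares and commutators, so that the $\ell$-primary part is $E[\ell^\infty]^{N_\ell}$; using the Weil pairing to force $\QQ(\zeta_m)\subseteq \QQ(2^\infty)$, hence $m\mid 24$, whenever full $m$-torsion appears; and then running the classifications of mod-$\ell$ and $2$-adic images, which is where the real content (deferred in your sketch) lies.

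There is, however, a genuine error in your elimination of the primes $\ell\geq 11$. You demand only that the isogeny character have $2$-power order, and then assert that inspection shows no sporadic prime has a character of $2$-power order. That assertion is false at $\ell=17$: the rational $j$-invariants admitting $17$-isogenies have kernel points defined over cyclic fields of degree $8$ and $16$ (see \cite[Table 4]{lozano1}), so the character can have order $8$, a $2$-power, and your argument as written fails to eliminate $\ell=17$. The condition you are entitled to is stronger: the character factors through $\Gal(\QQ(2^\infty)/\QQ)$, which has exponent $2$, and a cyclic quotient of an exponent-$2$ group has order dividing $2$; with ``order dividing $2$'' every sporadic prime dies, since the minimal character orders are $5,3,8,9,12,21,33,81$ for $\ell = 11,13,17,19,37,43,67,163$. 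A second problem in the same step: ``Mazur's classification of rational isogenies applied over $\QQ$ and its quadratic subfields'' is not an available theorem --- Mazur's isogeny theorem is a statement over $\QQ$ only, and classifying prime-degree isogenies of rational curves that appear only over quadratic fields is a genuinely different problem. The repair is the route taken by Lemma \ref{lem:isogp^j-k}: since $\QQ(2^\infty)/\QQ$ is Galois, $N_\ell$ is normal in the full image, so $E[\ell]^{N_\ell}$ is $\GQ$-stable; it is either all of $E[\ell]$ (whence $\ell\mid 24$ by the Weil pairing) or a single line, which is then the kernel of an isogeny defined over $\QQ$, and Mazur over $\QQ$ together with Lemma \ref{lem:CyclicIso} and the degree bounds of Proposition \ref{prop:FieldOfDef} finishes the reduction. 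With those two corrections your outline is sound, but the case analysis at $\ell=2,3,5,7$ and the entanglement arguments ruling out, e.g., $\ZZ/2\ZZ\oplus\ZZ/14\ZZ$ and $\ZZ/12\ZZ\oplus\ZZ/12\ZZ$ remain the bulk of the proof and are not supplied.
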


\begin{thm}[D., Lozano-Robledo, Najman, Sutherland \cite{Q(3)}]\label{thm:main_Q(3)} Let $E/\QQ$ be an elliptic curve. The torsion subgroup $E(\QQ(3^\infty))_\tor$ is finite, and
\[
E(\QQ(3^\infty))_\tor\simeq
\begin{cases}
\ZZ/2\ZZ\oplus \ZZ/2M\ZZ &\text{with}\ M=1,2,4,5,7,8,13, \text{ or}\\
\ZZ/4\ZZ\oplus \ZZ/4M\ZZ &\text{with}\ M=1,2,4,7,\ \text{or}\\
\ZZ/6\ZZ\oplus \ZZ/6M\ZZ &\text{with}\ M=1,2,3,5,7,\ \text{or}\\
\ZZ/2M\ZZ\oplus \ZZ/2M\ZZ &\text{with}\ M=4,6,7,9.\\
\end{cases}
\]
All but $4$ of the torsion subgroups $T$ listed above occur for infinitely many $\Qbar$-isomorphism classes of elliptic curves $E/\QQ$; for $T= \ZZ/4\ZZ\times\ZZ/28\ZZ,\ \ZZ/6\ZZ\times\ZZ/30\ZZ,\ \ZZ/6\ZZ\times\ZZ/42\ZZ$, and $\ZZ/14\ZZ\times\ZZ/14\ZZ$ there are only $2$, $2$, $4$, and $1$ (respectively) $\Qbar$-isomorphism classes of $E/\QQ$ for which $E(\QQ(3^\infty))_\tor\simeq T$.
\end{thm}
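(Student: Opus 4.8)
The plan is to reduce the determination of $E(\QQ(3^\infty))_\tor$ to a group-theoretic question about the image of the mod-$N$ Galois representations attached to $E$, by first describing exactly which number fields sit inside $\QQ(3^\infty)$. Since every cubic field has Galois closure with group $C_3$ or $S_3$, and every conjugate of a cubic field is again cubic, $\QQ(3^\infty)$ is Galois over $\QQ$ with $\Gal(\QQ(3^\infty)/\QQ)$ a closed subgroup of a product of copies of $S_3$. The key lemma I would prove is: a number field $L$ satisfies $L\subseteq\QQ(3^\infty)$ if and only if the Galois group of its Galois closure $\hat L/\QQ$ embeds into $S_3^n$ for some $n$, equivalently $\Gal(\hat L/\QQ)$ is a subdirect product of copies of $C_3$ and $S_3$. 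Everything downstream is a consequence of testing this embeddability condition on division fields.

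From the lemma I would extract the two arithmetic constraints that drive the classification. First, $\QQ(E[2])\subseteq\QQ(3^\infty)$ for \emph{every} $E/\QQ$, because $\Gal(\QQ(E[2])/\QQ)\hookrightarrow S_3$; in particular $E(\QQ(3^\infty))_\tor$ always contains full $2$-torsion, and more generally $\QQ(2^\infty)\subseteq\QQ(3^\infty)$. Second, the abelian subextensions of $\QQ(3^\infty)$ are exactly those whose Galois group is a product of cyclic groups of order $2$ and $3$; in particular a cyclic extension of degree $m$ lies in $\QQ(3^\infty)$ only if $m\mid 6$, so $\QQ(\zeta_\ell)\subseteq\QQ(3^\infty)$ forces $\ell-1\mid 6$, i.e. $\ell\in\{2,3,7\}$. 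This already tells us that full $E[\ell]\subseteq\QQ(3^\infty)$ (which requires $\QQ(\zeta_\ell)\subseteq\QQ(3^\infty)$ by the Weil pairing) is possible only for $\ell\in\{2,3,7\}$.

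A torsion point $P$ of order $N$ lies in $E(\QQ(3^\infty))$ if and only if $\QQ(P)\subseteq\QQ(3^\infty)$, which by the lemma becomes a condition on the image $G_N$ of the mod-$N$ representation: writing $H=\mathrm{Stab}_{G_N}(P)$, we need $G_N/\mathrm{core}(H)$ to embed into $S_3^n$, so in particular $[\QQ(P):\QQ]=[G_N:H]$ must be of the form $2^a3^b$. I would first use the Galois-stable span of the conjugates of a single $\ell$-torsion point, together with the cyclotomic constraint above, to bound the contributing primes: if the conjugates span all of $E[\ell]$ then $\ell\in\{2,3,7\}$, while otherwise they span a stable line, so $E$ admits a rational $\ell$-isogeny whose isogeny character has order dividing $6$. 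Combined with Mazur's classification of rational isogenies and the known tables of mod-$\ell$ images for $E/\QQ$, this leaves only $\ell\in\{2,3,5,7,13\}$, giving finiteness of the torsion. I would then work prime by prime to pin down the maximal $\ell$-power structure: the $2$-part is controlled by the $\QQ(2^\infty)$ classification (Theorem \ref{thm-fujita}) enlarged by the genuinely cubic directions; the $3$-part uses that $\zeta_9\in\QQ(3^\infty)$; and the $5$-, $7$-, and $13$-parts each reduce to checking which subgroups of $\GL_2(\ZZ/\ell^k\ZZ)$ realizable for rational curves also satisfy the $S_3^n$-embedding condition.

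Finally I would assemble the prime-power data by the Chinese Remainder Theorem, discarding combinations whose division fields cannot simultaneously embed into a product of $S_3$'s (for example, a large $2$-power part cannot coexist with $7$- or $13$-torsion), to arrive at the $20$ groups in the list, and then exhibit explicit one-parameter families or sporadic curves realizing each, reading off the four finite cases ($\ZZ/4\ZZ\oplus\ZZ/28\ZZ$, $\ZZ/6\ZZ\oplus\ZZ/30\ZZ$, $\ZZ/6\ZZ\oplus\ZZ/42\ZZ$, $\ZZ/14\ZZ\oplus\ZZ/14\ZZ$) from the fact that the relevant modular curves carry only finitely many rational points. I expect the main obstacle to be the prime $\ell=2$: the $2$-adic image is large and governed by the intricate classification of $2$-adic images of Rouse and Zureick-Brown, and deciding precisely which $2$-power points acquire coordinates in the $S_3$-directions of $\QQ(3^\infty)$ rather than the purely quadratic directions already present in $\QQ(2^\infty)$, together with the entanglement between the $2$-division fields and $\QQ(\zeta_3)$, requires the most delicate bookkeeping. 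In all cases, proving maximality and finiteness (that nothing larger occurs) is substantially harder than realizing the groups that do.
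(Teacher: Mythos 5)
This theorem is not proved in the paper at all: it is background, quoted with attribution from \cite{Q(3)}, so the only ``paper proof'' to compare against is the one in that reference (and the present paper's analogous argument for $\QQ(D_4^\infty)$, which imitates it). Your outline follows essentially the same route as that proof: characterize the finite subextensions of $\QQ(3^\infty)$ group-theoretically via subdirect products of transitive subgroups of $S_3$ (your key lemma is the $d=3$ case of Proposition \ref{prop:GG-subfield}, where the realization direction is easy because the relevant class of groups is quotient-closed, unlike the $D_4$ case treated in Theorem \ref{thm-D4=F}), deduce that full $2$-torsion is always present and that $\QQ(\zeta_\ell)\subseteq\QQ(3^\infty)$ forces $\ell\in\{2,3,7\}$, bound the torsion primes to $\{2,3,5,7,13\}$ using the rational isogeny classification together with field-of-definition bounds, pin down the $p$-primary parts from the Rouse--Zureick-Brown and Zywina image classifications, and finish by eliminating and realizing combinations through explicit modular-curve computations.
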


In their article \cite{grizzard}, Gal and Grizzard prove a more general version of the following proposition.

\begin{prop}\cite[Theorem 9]{grizzard}\label{prop:GG-subfield}
Let $K/\QQ$ be a finite extension. Then $K\subseteq \QQ(d^\infty)$ if and only if the following two conditions are met. 
\begin{enumerate}
\item There exists a group $H$ which is a subdirect product of transitive subgroups of degree $d$ with some normal subgroup $N$ such that 
$$1\to N\to H \to \Gal(K/\QQ) \to 1$$
is a short exact sequence.
\item We can solve the corresponding Galois embedding problem, i.e. we can find a field $L\supseteq K$ such that $\Gal(L/\QQ)\simeq H$.
\end{enumerate}
\end{prop}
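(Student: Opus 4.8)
The plan is to translate the containment $K\subseteq\QQ(d^\infty)$ into the language of the Galois correspondence, using the dictionary in which a degree-$d$ extension of $\QQ$ corresponds to an index-$d$ subgroup of an ambient Galois group, and hence to a transitive permutation representation on $d$ points, while a compositum of fields corresponds to an intersection of the associated subgroups. Two preliminary observations make this work. First, $\QQ(d^\infty)/\QQ$ is Galois: every generator $\beta$ with $[\QQ(\beta):\QQ]=d$ has all of its conjugates of degree $d$ as well, so $\QQ(d^\infty)$ contains the Galois closure of each of its generators. Second, since $K/\QQ$ is finite it is generated by finitely many elements, each lying in a compositum of finitely many degree-$d$ fields; hence $K\subseteq\QQ(d^\infty)$ if and only if $K$ lies in a compositum $F_1\cdots F_m$ of finitely many degree-$d$ extensions $F_i/\QQ$. (As $\QQ(d^\infty)/\QQ$ is Galois I may also replace $K$ by its Galois closure whenever it is convenient to assume $K/\QQ$ is Galois, which is what makes $\Gal(K/\QQ)$ meaningful in the statement.)

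For the forward direction I would start from such a compositum $F_1\cdots F_m\supseteq K$ and let $L$ be its Galois closure, equivalently $L=\widetilde{F_1}\cdots\widetilde{F_m}$, where $\widetilde{F_i}$ denotes the Galois closure of $F_i$. Set $H=\Gal(L/\QQ)$ and $T_i=\Gal(\widetilde{F_i}/\QQ)$. Since $F_i$ has degree $d$, the group $T_i$ acts faithfully and transitively on the $d$ conjugates of a primitive element of $F_i$, so $T_i$ is (isomorphic to) a transitive subgroup of $S_d$. The restriction maps $\pi_i\colon H\to T_i$ are surjective by Galois theory, and they are jointly injective because $L=\widetilde{F_1}\cdots\widetilde{F_m}$ forces $\bigcap_i\ker\pi_i=\Gal(L/L)=1$; thus $H$ embeds as a subdirect product of the $T_i$. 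Taking $N=\Gal(L/K)$ (normal since $K/\QQ$ is Galois) yields the short exact sequence $1\to N\to H\to\Gal(K/\QQ)\to1$, and the field $L\supseteq K$ with $\Gal(L/\QQ)\cong H$ solves the embedding problem. This gives conditions (1) and (2).

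For the converse I would run the same correspondence in reverse. Given a subdirect embedding $H\le T_1\times\cdots\times T_m$ into transitive degree-$d$ subgroups $T_i\le S_d$, a normal subgroup $N\trianglelefteq H$ with $H/N\cong\Gal(K/\QQ)$, and a field $L\supseteq K$ with $\Gal(L/\QQ)\cong H$, I set $H_i=\ker\pi_i$ for the projection $\pi_i\colon H\to T_i$ and let $\widetilde{F_i}=L^{H_i}$, which is Galois over $\QQ$ with group $T_i$. Choosing a point stabilizer $S_i\le T_i$ of the transitive action, the field $F_i=L^{\pi_i^{-1}(S_i)}$ has degree $[T_i:S_i]=d$ over $\QQ$, and its Galois closure is $L^{H_i}=\widetilde{F_i}$ because the normal core of $\pi_i^{-1}(S_i)$ equals $\ker\pi_i=H_i$ (here one uses that $T_i$ acts faithfully on the $d$ cosets). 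Hence each $\widetilde{F_i}$, being the Galois closure of a degree-$d$ field, is a compositum of conjugate degree-$d$ fields and so lies in $\QQ(d^\infty)$. Finally the subdirect hypothesis gives $\bigcap_i H_i=1$, so $\widetilde{F_1}\cdots\widetilde{F_m}=L^{\bigcap_i H_i}=L$, whence $K\subseteq L=\widetilde{F_1}\cdots\widetilde{F_m}\subseteq\QQ(d^\infty)$.

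The routine parts are the permutation-group bookkeeping: identifying $T_i$ with a transitive subgroup of $S_d$, checking that the normal core of a point stabilizer's preimage is exactly $\ker\pi_i$, and translating ``compositum of Galois closures'' into ``intersection of kernels.'' The step that carries the real content — and the reason condition (2) cannot be absorbed into condition (1) — is the embedding problem: the purely group-theoretic data of a subdirect product $H$ surjecting onto $\Gal(K/\QQ)$ does not by itself guarantee that $H$ is realizable as a Galois group over $\QQ$ by a field \emph{containing} $K$. Establishing or assuming the solvability of that embedding problem is what actually produces the field $L$ through which the containment $K\subseteq\QQ(d^\infty)$ is witnessed, so I expect that to be the main obstacle in any direct proof.
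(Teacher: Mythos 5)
The paper does not prove this proposition at all — it is quoted verbatim from Gal--Grizzard \cite[Theorem 9]{grizzard} (who prove a more general statement over an arbitrary number field) — so there is no internal proof to compare against. Your argument is correct and is precisely the natural Galois-correspondence proof one would expect the cited reference to contain: the forward direction realizes $H=\Gal(L/\QQ)$ as a subdirect product via the jointly injective restriction maps to the Galois closures $\widetilde{F_i}$, and the converse recovers degree-$d$ fields as fixed fields of preimages of point stabilizers, using that the normal core of $\pi_i^{-1}(S_i)$ is $\ker\pi_i$; your handling of the non-Galois case (replacing $K$ by its Galois closure, legitimate because $\QQ(d^\infty)/\QQ$ is Galois) is also the right reading of the statement.
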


This proposition gives a concrete way to check is a given finite extension is contained inside of $\QQ(d^\infty)$ explicitly. Motivated by part (1) of this proposition we give the following definition which allows us to define some related fields. 

\begin{defn}\label{def:Q(G^infty)}
Let $G$ be a transitive subgroup of $S_d$ for some $d\geq 2$. We say that a finite group $H$ is of {\bfseries generalized $G$-type} if it is isomorphic to a quotient of a subdirect product of transitive subgroups of $G$. Given a number field $K/\QQ$ and its Galois closure $\widehat{K}$, we say that $K/\QQ$ is of {\bfseries generalized $G$-type} if $\Gal(\widehat{K}/\QQ)$ is a group of generalized $G$-type. Let $\QQ(G^\infty)$ be the compositum of all fields that are of generalized $G$-type. 
\end{defn}

While in general it is not necessarily true that $\QQ(S_d^\infty) =  \QQ(d^\infty)$ since the relevant Galois embedding problems are not always solvable, it is always true that $\QQ(d^\infty) \subseteq  \QQ(S_d^\infty)$. In the case that $d=3$, \cite{Q(3)} shows that in fact $\QQ(3^\infty)= \QQ(S_3^\infty)$. Further, if $G\subseteq S_d$ is transitive and nilpotent, then by a celebrated result of Shafarevich, the relevant Galois embedding problems are alway solvable and $\QQ(G^\infty)\subseteq \QQ(d^\infty)$ since the kernel of the embedding problem would also have to be nilpotent. In fact, $G$ needn't be nilpotent itself as long as all the normal proper subgroups of $G$ are nilpotent. For more information about what is known  regarding Galois embedding problems, including a complete statement and proof of Shafarevich's theorem on the solvability of Galois embedding problems the reader is encouraged to see either \cite{EmbeddingProblem}, \cite{Inverse Galois Theory}, or \cite{serreTopics}. 

\begin{thm}\label{thm-D4=F}
If $F/\QQ$ is the compositum of all $D_4$-extensions of $\QQ$, then $F = \QQ(D_4^\infty)$.
\end{thm}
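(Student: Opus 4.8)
The plan is to prove the two inclusions separately. That $F\subseteq\QQ(D_4^\infty)$ is immediate, since a $D_4$-extension is of generalized $D_4$-type ($D_4$ is a subdirect product of the single transitive subgroup $D_4\le D_4$, and is a quotient of itself). For the reverse inclusion it suffices to show every generalized $D_4$-type field lies in $F$, and as such a field sits inside its Galois closure I may assume I am handed a Galois $\widehat{K}/\QQ$ with $\Gal(\widehat{K}/\QQ)\cong H$, where $H$ is a quotient of a subdirect product $P\le G_1\times\cdots\times G_m$ of transitive subgroups $G_i$ of $D_4$. The first step is group theory: the transitive subgroups of $D_4\le S_4$ are exactly $C_4$, $V_4=C_2\times C_2$, and $D_4$, so $P$, and hence the kernel $N$ of $P\twoheadrightarrow H$, is a nilpotent $2$-group. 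The strategy is then to \emph{enlarge} $\widehat K$: the embedding problem $1\to N\to P\to H\to 1$ has nilpotent kernel, so by the solvability of Galois embedding problems with nilpotent kernel (Shafarevich) it is solvable over $\QQ$, yielding a Galois $\widetilde K\supseteq\widehat K$ with $\Gal(\widetilde K/\QQ)\cong P$. Setting $N_i=\ker(P\to G_i)$ we have $\bigcap_i N_i=1$ since $P$ is subdirect, so $\widetilde K$ is the compositum of the fields $\widetilde K^{N_i}$, each Galois over $\QQ$ with group $G_i\in\{C_4,V_4,D_4\}$. Thus $\widehat K\subseteq\widetilde K$ will lie in $F$ once I show that every $C_4$-, $V_4$-, and $D_4$-extension of $\QQ$ lies in $F$. (This detour through $P$ is genuinely needed: $H$ itself need not be a subdirect product of $C_4$'s, $V_4$'s and $D_4$'s---for instance $Q_8$ occurs as such a quotient $H$---so one cannot simply decompose $\widehat K$ directly.)

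The $D_4$ case is the definition of $F$. For quadratics, every $\QQ(\sqrt m)$ lies in $F$: for squarefree $m>1$ it is a subfield of the $D_4$-extension $\QQ(\sqrt[4]{m},i)$, and for $m<0$ it lies in $\QQ(\sqrt[4]{|m|},i)$, which contains both $i$ and $\sqrt{|m|}$ and hence $\sqrt m$; the field $\QQ(i)$ sits inside $\QQ(\sqrt[4]{2},i)$. As a $V_4$-extension is the compositum of two quadratics, the $V_4$ case follows, and everything reduces to the cyclic quartic case.

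For the crux, let $M/\QQ$ be cyclic quartic. Its quadratic subfield $k=\QQ(\sqrt d)$ is real ($d>0$), since complex conjugation, if nontrivial, is the unique involution of $\Gal(M/\QQ)\cong C_4$ and so fixes $k$. Write $M=k(\sqrt\gamma)$ with $\gamma\in k^{*}$; cyclicity (as opposed to $V_4$) is exactly the condition $N_{k/\QQ}(\gamma)\in d\,(\QQ^{*})^2$. I would then put $\beta=\gamma\sqrt d\in k$ and note
\[
N_{k/\QQ}(\beta)=N_{k/\QQ}(\gamma)\cdot N_{k/\QQ}(\sqrt d)\in \bigl(d\,(\QQ^{*})^2\bigr)\cdot(-d)=-d^{2}(\QQ^{*})^{2},
\]
which, as $d>0$, lies in neither $(\QQ^{*})^2$ nor $d\,(\QQ^{*})^2$. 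By the trichotomy classifying the quartic fields $k(\sqrt\beta)$, the Galois closure $L_1$ of $k(\sqrt\beta)$ is then a $D_4$-extension. With $L_2=\QQ(\sqrt[4]{d},i)$ also a $D_4$-extension, the identity $\sqrt\gamma=\sqrt{\beta}/\sqrt[4]{d}$ shows $\sqrt\gamma\in L_1L_2$, so $M=k(\sqrt\gamma)\subseteq L_1L_2\subseteq F$.

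The hard part is exactly this cyclic quartic step, because---unlike the other cases---$M$ is contained in \emph{no} single $D_4$-extension: $C_4$ is not a quotient of $D_4$, so no $D_4$-extension has a cyclic quartic subextension, and $M$ can surface only inside a compositum of at least two $D_4$-extensions. The construction works because $L_1$ and $L_2$ share the biquadratic field $\QQ(\sqrt d,i)$ yet have different distinguished cyclic quadratic subfields ($\QQ(i)$ for $L_1$ against $\QQ(\sqrt{-d})$ for $L_2$); this misalignment is what forces the $C_4$-quotient $M$ into $\Gal(L_1L_2/\QQ)$, which (as a fibre product of two $D_4$'s over $V_4$) would otherwise have exponent-$2$ abelianization. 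Checking that $L_1,L_2$ really are $D_4$-extensions and tracking the square classes precisely is where the remaining care is required; the rest of the argument is formal.
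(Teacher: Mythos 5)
Your proposal has the same skeleton as the paper's proof---both reduce the theorem to showing that every $\ZZ/4\ZZ$- and $V_4$-extension of $\QQ$ lies in $F$---but your execution of the two nontrivial cases is genuinely different, and where it differs it is on firmer ground. The paper handles the cyclic quartic case by invoking Shafarevich to realize the order-$16$ group $G=\langle t_1s_2,\,t_2s_1\rangle\subseteq D_4\times D_4$ over an auxiliary field $K'=K(\sqrt p)$ and then writing the solution field as a compositum of two $D_4$-subfields, and it disposes of the $V_4$ case by asserting that every quadratic field embeds in a $\ZZ/4\ZZ$-extension. Your constructions avoid embedding problems entirely in these cases: quadratics sit inside $\QQ(\sqrt[4]{m},i)$, and for a cyclic quartic $M=k(\sqrt\gamma)$ with $k=\QQ(\sqrt d)$, $d>0$, the twist $\beta=\gamma\sqrt d$ satisfies $N_{k/\QQ}(\beta)\in-(\QQ^{*})^2$, which (using $d>0$) lies in neither $(\QQ^{*})^2$ nor $d(\QQ^{*})^2$, so the closure $L_1$ of $k(\sqrt\beta)$ is a $D_4$-extension and $M\subseteq L_1\cdot\QQ(\sqrt[4]{d},i)$. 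I checked this step: the norm trichotomy is applied correctly, $\beta$ is automatically a nonsquare in $k$, and $\sqrt\gamma=\sqrt\beta/\sqrt[4]{d}$ does land in $L_1L_2$. This is a real improvement, because the paper's $V_4$ step is false as stated: $\QQ(\sqrt d)$ embeds in a cyclic quartic field if and only if $d$ is a sum of two squares, so for example $\QQ(i)$ and $\QQ(\sqrt 3)$ lie in no $\ZZ/4\ZZ$-extension of $\QQ$. Your direct route through $D_4$-extensions sidesteps this error.

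The gap is in your reduction step. The Shafarevich-type theorem concerns \emph{split} embedding problems with nilpotent kernel; for non-split problems with nilpotent kernel solvability can fail, the standard counterexample being exactly the one above, $1\to\ZZ/2\ZZ\to\ZZ/4\ZZ\to\ZZ/2\ZZ\to 1$ over $\QQ(i)$. Your embedding problem $1\to N\to P\to H\to 1$ is non-split precisely in the cases that force the detour: if $H\simeq Q_8$, a splitting would embed $Q_8$ into a product of subgroups of $D_4$, which is impossible since every proper quotient of $Q_8$ is abelian. So the theorem you cite does not apply as stated, and this step needs repair---either by verifying the hypotheses of a genuinely non-split embedding theorem for nilpotent kernels (e.g.\ local solvability in the Ishkhanov--Lur'e--Faddeev framework), or by rechoosing $P$ and introducing auxiliary quadratic twists so that the problems actually solved are split. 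To be fair, the paper is on no firmer ground here: it asserts this same reduction by ``inspecting the definition,'' and its own invocations of Shafarevich are also for non-split problems (any complement of $[G,G]$ in $G$ would have index $2$ and hence contain $[G,G]$, so $1\to[G,G]\to G\to \ZZ/4\ZZ\times\ZZ/2\ZZ\to 1$ is non-split). In summary: your treatment of the $\ZZ/4\ZZ$, $V_4$, and $D_4$ cases is correct and more careful than the paper's, but the reduction of a general generalized $D_4$-type field to those cases is, in your write-up as in the paper's, not a consequence of the theorem being cited.
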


\begin{proof}
Inspecting Definition \ref{def:Q(G^infty)} we can see that the field $\QQ(D_4^\infty)$ is the compositum of all $D_4$, $\ZZ/4\ZZ$, and $V_4$ extensions of $\QQ$. Here $V_4$ is the Klein Four-group. From this we immediately get that $F\subseteq \QQ(D_4^\infty)$ and to show the reverse inclusion we simply need to show that any $\ZZ/4\ZZ$ or $V_4$ extension of $\QQ$ is in $F$. 

Suppose that $K/\QQ$ is a finite extension such that $\Gal(K/\QQ)\simeq \ZZ/4\ZZ$ and consider the group $D_4\times D_4$ together with $t_i$ and $s_i$ generators of the $i$th copy of $D_4$ with order 4 and 2 respectively. Next let $G = \langle t_1s_2,t_2s_1  \rangle \subseteq D_4\times D_4$. The group $G$ has the property that $G/[G,G] \simeq \ZZ/4\ZZ\times \ZZ/2\ZZ$. Clearly, there exists a $K'/\QQ$ such that $\Gal(K'/\QQ)\simeq \ZZ/4\ZZ\times\ZZ/2\ZZ$ since we can just pick $K' = K(\sqrt{p})$ for some prime $p$ unramified in $K/\QQ$. This give us a short exact sequence 
$$\xymatrix{
1\ar[r]  & [G,G] \ar[r]& G\ar[r] & \Gal(K'/\QQ)\ar[r] & 1
}$$
From the celebrated result of Shafarevich mentioned above together with the fact that $[G,G]$ is nilpotent we know that there is a field $L\subseteq K' \subseteq K$ such that $\Gal(L/\QQ)\simeq G$. Studying the group $G$, we see that there are two normal subgroups $N_1= \langle t_1^2\rangle$ and $N_2=\langle t_1^2 \rangle$ such that $G/N_1 \simeq G/N_2\simeq D_4$ which means that there are two subfields of $L$, call them $L_1$ and $L_2$ such that $$\Gal(L_1/\QQ)\simeq\Gal(L_2/\QQ)\simeq D_4.$$ 
Further, since $|N_1|=|N_2|=2$ and $N_1\neq N_2$, we know that $[L:L_1] = [L:L_2] = 2$ and $L_1\neq L_2$. Therefore $L= L_1L_2\subseteq F$ and $K \subseteq K' \subseteq L = L_1L_2\subseteq F$. 

To see that every $V_4$ extension of $\QQ$ is contained in $F$ it is enough to note that by the same result of Shafarevich, every quadratic extension $F/\QQ$ can be embedded in an $\ZZ/4\ZZ$ extension of $\QQ$ and we just saw that every such extension is in $F$. 

\end{proof}

\begin{remark}
In light of Theorem \ref{thm-D4=F}, for the rest of the paper, we will use $\QQ(D_4^\infty)$ to denote both the compositum of $D_4$ extensions of $\QQ$ and well as the compositum of all generalized $D_4$ extensions of $\QQ$. 
\end{remark}

The main goal of this article is to classify (up to isomorphism) the groups that occur as the torsion subgroup of an elliptic curve defined over $\QQ$ and then base-extended to $\QQ(D_4^\infty)$. The reason we chose to work with the group $D_4$ is that it is the simplest nonabelain transitive subgroup of $S_4$ that has yet to be dealt with. We chose to work with a nonabelain group since the torsion structures of rational elliptic curves base-extended to the maximal abelian extension of $\QQ$ have been completely classified in \cite{chou2}.

\begin{thm}\label{thm:main}
Let $E/\QQ$ be an elliptic curve. The torsion subgroup $E(\QQ(D_4^\infty))_\tor$ is finite and 
\[
E(\QQ(D_4^\infty))_\tor\simeq
\begin{cases}
\ZZ/M\ZZ & \hbox{with }M = 1,3,5,7,9,13,15, \hbox{ or}\\
\ZZ/3\ZZ \oplus \ZZ/3M\ZZ & \hbox{with }M = 1,5 \hbox{ or}\\
\ZZ/4\ZZ \oplus \ZZ/4M\ZZ & \hbox{with }1\leq M\leq 6 \hbox{ or } M=8, \hbox{ or} \\
\ZZ/5\ZZ \oplus \ZZ/5\ZZ & \hbox{ or} \\
\ZZ/8\ZZ \oplus \ZZ/8M\ZZ&  \hbox{with }1\leq M\leq 4 \hbox{ or} \\
\ZZ/12\ZZ \oplus \ZZ/12M\ZZ&  \hbox{with }1\leq M\leq 2 \hbox{ or} \\
\ZZ/16\ZZ \oplus \ZZ/16\ZZ.&\\
\end{cases}
\]
All but $3$ of the $24$ torsion structures listed above occur for infinitely many $\Qbar$-isomorphism classes of elliptic curves $E/\QQ$. The torsion structures that occur finitely often are $\ZZ/15\ZZ$, $\ZZ/3\ZZ \oplus \ZZ/15\ZZ$, and $\ZZ/12\ZZ\oplus\ZZ/24\ZZ$ which occur for $4$, $2$, and $1$ $\Qbar$-isomorphism classes respectively. 
\end{thm}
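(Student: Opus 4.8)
The plan is to reduce the problem over the infinite extension $\QQ(D_4^\infty)$ to a finite, computable problem about the mod-$n$ and $p$-adic images of Galois representations attached to $E$. The key structural observation is that, by Theorem~\ref{thm-D4=F} and Definition~\ref{def:Q(G^infty)}, a point $P \in E(\Qbar)_\tor$ lies in $E(\QQ(D_4^\infty))$ precisely when the field $\QQ(P)$ is of generalized $D_4$-type, i.e. its Galois closure has Galois group that is a quotient of a subdirect product of transitive subgroups of $D_4$. Since every such group is a $2$-group whose abelianization and structure are tightly constrained (the transitive subgroups of $D_4$ are $\ZZ/4\ZZ$, $V_4$, and $D_4$ itself), the Galois action on $P$ must factor through a group of this controlled shape. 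First I would translate this into a condition on the image of the mod-$n$ representation $\rho_{E,n}\colon \GQ \to \GL_2(\ZZ/n\ZZ)$: a torsion point of order $n$ survives into $\QQ(D_4^\infty)$ if and only if the corresponding orbit/stabilizer data produces a quotient of generalized $D_4$-type.

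\textbf{Proving finiteness.} The first genuine step is to bound which primes $\ell$ and which prime powers $\ell^k$ can divide the order of a point in $E(\QQ(D_4^\infty))_\tor$. For a point of order $\ell$, the field of definition $\QQ(P)$ has degree dividing $\ell^2-1$ or similar, and requiring it to be of generalized $D_4$-type forces the projective image of $\rho_{E,\ell}$ to be a $2$-group, which (away from finitely many exceptional $\ell$) contradicts the largeness of the mod-$\ell$ image guaranteed by Serre's open image theorem. Concretely, I would argue that for all but finitely many $\ell$ the mod-$\ell$ image contains $\SL_2(\FF_\ell)$ or is otherwise too big to admit a generalized $D_4$-type quotient, so only finitely many primes can occur; a separate argument bounds the power of each such prime. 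This establishes that $E(\QQ(D_4^\infty))_\tor$ is finite and supported on a fixed finite set of primes, reducing the classification to a finite search.

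\textbf{Classifying the structures.} With finiteness in hand, the bulk of the work is a case analysis, prime by prime, of the possible images of $\rho_{E,\ell^\infty}$ that are compatible with the generalized $D_4$-type condition. I would use the known classification of $\ell$-adic (and mod-$\ell^k$) images of Galois for rational elliptic curves, intersecting each candidate image with the subgroup lattice that encodes ``the point generates a field of generalized $D_4$-type.'' For each prime this yields a finite list of admissible torsion components, and combining the independent local contributions via the group-structure constraint $\ZZ/a\ZZ \oplus \ZZ/ab\ZZ$ produces the list of $24$ groups. The realizability claims (infinitely often versus finitely often) are handled by exhibiting explicit one-parameter families of curves for the generic cases and, for the three exceptional structures $\ZZ/15\ZZ$, $\ZZ/3\ZZ\oplus\ZZ/15\ZZ$, and $\ZZ/12\ZZ\oplus\ZZ/24\ZZ$, by showing the relevant modular curve has only finitely many rational or suitably-low-degree points, then enumerating the corresponding $j$-invariants.

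\textbf{Main obstacle.} I expect the hardest part to be the interface between the group-theoretic ``generalized $D_4$-type'' condition and the image-of-Galois data: namely, determining exactly when the field $\QQ(P)$ generated by a torsion point is of generalized $D_4$-type is subtler than simply asking that the relevant Galois group be a $2$-group, because one must correctly account for subdirect products and quotients as in Proposition~\ref{prop:GG-subfield}. In particular, points of order divisible by odd primes (such as $3$, $5$, $7$, $13$) can still survive because the odd-order part of the field can be killed in a quotient or absorbed by the abelianization constraints, and pinning down precisely which odd-torsion configurations are allowed—and proving the boundary cases like $\ZZ/15\ZZ$ occur only finitely often—will require the most delicate modular-curve and Galois-image bookkeeping.
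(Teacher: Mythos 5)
Your overall architecture (translate membership in $\QQ(D_4^\infty)$ into a condition on Galois images, classify the admissible $p$-primary components using the known classifications of mod-$p$ and $2$-adic images, and settle the exceptional structures with modular-curve computations) matches the paper's, but your finiteness step has a genuine gap. Serre's open image theorem is not uniform in $E$: the finite set of primes $\ell$ for which $\Im\bar\rho_{E,\ell}$ fails to contain $\SL_2(\FF_\ell)$ depends on the curve, and whether it can be bounded independently of $E$ is precisely Serre's uniformity question, which is open; moreover the theorem says nothing about CM curves, for which $\Im\bar\rho_{E,\ell}$ never contains $\SL_2(\FF_\ell)$. So your argument can show, for each fixed non-CM curve, that only finitely many primes occur, but it cannot deliver the ``fixed finite set of primes'' on which the classification into $24$ groups rests. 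The paper gets uniformity from different inputs: full $p$-torsion forces $\QQ(\zeta_p)\subseteq\QQ(D_4^\infty)$, hence $p\mid 240$ (Proposition \ref{prop:ContiansCyclo}, Lemma \ref{lem:CyclotomicSubfields}); otherwise Lemma \ref{lem:isogp^j-k} produces a rational $p$-isogeny, and then the uniform isogeny theorem of Mazur--Kenku (Theorem \ref{thm:IsoTypes}), Lemma \ref{lem:CyclicIso}, the degree bound of Lozano-Robledo (Proposition \ref{prop:FieldOfDef}), and the fact that a cyclic extension inside $\QQ(D_4^\infty)$ has degree dividing $4$ (Lemma \ref{lem:D4Classification}) cut the list to $\{2,3,5,7,13\}$ for every $E/\QQ$, CM or not (Proposition \ref{prop:FinitePrimesD4}). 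A smaller but real error: the generalized $D_4$-type condition does \emph{not} force the projective image of $\bar\rho_{E,\ell}$ to be a $2$-group; the correct statement is that $\Im\bar\rho_{E,\ell}$ modulo the normal core of the stabilizer of $P$ (i.e.\ the Galois group of the Galois closure of $\QQ(P)/\QQ$) is a $2$-group of exponent dividing $4$ and nilpotency class at most $2$. For example, \texttt{11a1} has a rational point of order $5$, so the condition holds, yet its projective mod-$5$ image has order divisible by $5$. Your $\SL_2$ contradiction survives this correction (a vector stabilizer in a group containing $\SL_2(\FF_\ell)$ has core meeting $\SL_2(\FF_\ell)$ inside the center, so the core-quotient contains $\PSL_2(\FF_\ell)$), but the claim as written is false.

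Separately, the phrase ``combining the independent local contributions \dots\ produces the list of $24$ groups'' conceals the bulk of the proof: the $p$-primary components are \emph{not} independent, and taking all combinations of admissible components of the shape $\ZZ/a\ZZ\oplus\ZZ/ab\ZZ$ yields strictly more than $24$ groups. For instance $\ZZ/13\ZZ$ and $\ZZ/3\ZZ\oplus\ZZ/3\ZZ$ are each admissible, but ruling out their simultaneous occurrence requires the fiber product of the two parametrizing curves---a genus-$13$ curve whose quotient by an explicit automorphism group is a genus-$2$ hyperelliptic curve with rank-$0$ Jacobian; similar fiber-product and isogeny-graph arguments are needed to exclude a $5$-isogeny together with $\ZZ/4\ZZ\oplus\ZZ/8\ZZ$, full $3$-torsion together with full $8$-torsion, and so on, and the same computations produce the counts $4$, $2$, $1$ for the exceptional structures. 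You do identify this interaction as the main obstacle, so the plan points in the right direction, but as written it treats as bookkeeping what is in fact the content of Section \ref{sec:TorsionStructures} of the paper, without which neither the list of $24$ groups nor the finiteness claims can be established.
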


In Table \ref{tab:Examples}, for each torsion structure $T$ that occurs in Theorem \ref{thm:main} we give the Cremona label of the elliptic curve $E/\QQ$ with the smallest conductor such that $E(\QQ(D_4^\infty))_\tor \simeq T$. Throughout the paper, we will refer to elliptic curves by their Cremona label \cite{cremona} and provide a hyperlink to their entry on the $L$-functions and Modular Forms Database \cite{lmfdb}.

\begin{table}[h!]
\begin{center}
\renewcommand{\arraystretch}{1}
\begin{tabular}{|l|l||l|l|}\hline
	$E/\QQ$ & $E(\QQ(D_4^\infty))_\tor$ & $E/\QQ$ & $E(\QQ(D_4^\infty))_\tor$\\
	\hline\bigstrut[t]
	\href{http://www.lmfdb.org/EllipticCurve/Q/26b2}{\texttt{26b2}}     & $\{\mathcal{O}\}$ &
    \href{http://www.lmfdb.org/EllipticCurve/Q/15a5}{\texttt{15a5}}   & $\ZZ/4\ZZ\oplus \ZZ/16\ZZ$ \\
	\href{http://www.lmfdb.org/EllipticCurve/Q/19a2}{\texttt{19a2}}     & $\ZZ/3\ZZ$ &
    \href{http://www.lmfdb.org/EllipticCurve/Q/66c1}{\texttt{66c1}}     & $\ZZ/4\ZZ\oplus \ZZ/20\ZZ$ \\
    \href{http://www.lmfdb.org/EllipticCurve/Q/11a2}{\texttt{11a2}}     & $\ZZ/5\ZZ$ &
    \href{http://www.lmfdb.org/EllipticCurve/Q/30a1}{\texttt{30a1}}     & $\ZZ/4\ZZ\oplus \ZZ/24\ZZ$ \\
	\href{http://www.lmfdb.org/EllipticCurve/Q/26b1}{\texttt{26b1}}     & $\ZZ/7\ZZ$ &
	\href{http://www.lmfdb.org/EllipticCurve/Q/210e1}{\texttt{210e1}}     & $\ZZ/4\ZZ\oplus \ZZ/32\ZZ$ \\
	\href{http://www.lmfdb.org/EllipticCurve/Q/54a2}{\texttt{54a2}}     & $\ZZ/9\ZZ$ &
	\href{http://www.lmfdb.org/EllipticCurve/Q/11a1}{\texttt{11a1}}     & $\ZZ/5\ZZ\oplus \ZZ/5\ZZ$ \\
	\href{http://www.lmfdb.org/EllipticCurve/Q/2890d1}{\texttt{2890d1}} & $\ZZ/13\ZZ$ &
	\href{http://www.lmfdb.org/EllipticCurve/Q/17a1}{\texttt{17a1}}   & $\ZZ/8\ZZ\oplus \ZZ/8\ZZ$ \\
	\href{http://www.lmfdb.org/EllipticCurve/Q/50a1}{\texttt{50a1}}     & $\ZZ/15\ZZ$ &
	\href{http://www.lmfdb.org/EllipticCurve/Q/15a2}{\texttt{15a2}}     & $\ZZ/8\ZZ\oplus \ZZ/16\ZZ$ \\
	\href{http://www.lmfdb.org/EllipticCurve/Q/19a1}{\texttt{19a1}}     & $\ZZ/3\ZZ\oplus \ZZ/3\ZZ$ &
	\href{http://www.lmfdb.org/EllipticCurve/Q/30a2}{\texttt{30a2}}     & $\ZZ/8\ZZ\oplus \ZZ/24\ZZ$ \\
	\href{http://www.lmfdb.org/EllipticCurve/Q/338d1}{\texttt{338d1}}   & $\ZZ/3\ZZ\oplus \ZZ/15\ZZ$ &
    \href{http://www.lmfdb.org/EllipticCurve/Q/210e2}{\texttt{210e2}} & $\ZZ/8\ZZ\oplus \ZZ/32\ZZ$ \\
	\href{http://www.lmfdb.org/EllipticCurve/Q/46a1}{\texttt{46a1}}   & $\ZZ/4\ZZ\oplus \ZZ/4\ZZ$ &
	\href{http://www.lmfdb.org/EllipticCurve/Q/14a1}{\texttt{14a1}}     & $\ZZ/12\ZZ\oplus \ZZ/12\ZZ$ \\
	\href{http://www.lmfdb.org/EllipticCurve/Q/17a3}{\texttt{17a3}}   & $\ZZ/4\ZZ\oplus \ZZ/8\ZZ$ &
	\href{http://www.lmfdb.org/EllipticCurve/Q/256a1}{\texttt{256a1}}     & $\ZZ/12\ZZ\oplus \ZZ/24\ZZ$ \\
	\href{http://www.lmfdb.org/EllipticCurve/Q/14a3}{\texttt{14a3}}   & $\ZZ/4\ZZ\oplus \ZZ/12\ZZ$ &
	\href{http://www.lmfdb.org/EllipticCurve/Q/15a1}{\texttt{15a1}}     & $\ZZ/16\ZZ\oplus \ZZ/16\ZZ$ \\
	\hline
\end{tabular}
\end{center}
\caption{Examples of minimal conductor for each possible torsion structure over $\QQ(D_4^\infty)$}  \label{tab:Examples}
\end{table}

All of the computations were done using Magma \cite{magma} and the procedures used to verify the examples in Table \ref{tab:Examples} and all the other computational results are available at \cite{MagmaCode}. These include the modular curves that were used to compute a complete set of rational functions that parametrize the elliptic curves over $\QQ$ with a given torsion structure when base-extended to $\QQ(D_4^\infty)$ in Table \ref{tab:jMaps}.

Many of the results in this article rely on recent advances on Galois representations associated to elliptic curves defined over $\QQ$. In particular, we take advantage of the impressive results of Rouse and Zureick-Brown in \cite{RZB}, where all the possible 2-adic images of Galois representations attached to elliptic curves are classified. We also use the equally impressive classifications of mod $p$ representations attached to rational elliptic curves given in \cite{zywina1} and the classification of modular curves of primes-power level with infinitely many points given in \cite{SZ}.

The organization of the paper is as follows. In Section \ref{sec:galoisreps}, we briefly review results about Galois representations attached to elliptic curves which will be useful for the rest of the paper. 
In Section \ref{sec:D4GroupsAndFields}, we find necessary and sufficient conditions for a finite group to be of generalized $D_4$-type and classify some important subfields of $\QQ(D_4^\infty)$. 
In Section \ref{sec:GrowthOfTorsion}, we give some general results about the field of definition of isogenies and how torsion groups grow under base-extension. Section \ref{sec:pPrimary} classifies all of the possible nontrivial $p$-primary components of torsion subgroups of an elliptic curve $E(\QQ(D_4^\infty))_\tor$. 
In Section \ref{sec:TorsionStructures}, we prove that the 24 groups in Theorem \ref{thm:main} are the only groups up to isomorphism that occur. 
Finally, in Section \ref{sec:Tparam}, we precisely characterize the elliptic curves that realize each of the subgroups listed in Theorem \ref{thm:main}.

\subsection{Acknowledgements} The author would like to thank \'Alvaro Lozano-Robledo for many useful conversations during the preparation of this paper. We would also like to thank Maarten Derickx, Jeffrey Hatley, Filip Najman, Andrew Sutherland, and the anonymous referee for useful comments and conversations about earlier versions of this article.

\section{Galois representations associated to elliptic curves}\label{sec:galoisreps}

Before talking about Galois representations associated to elliptic curves we need to establish some notation. We will follow the notational conventions laid out in \cite[Section 2]{Q(3)}, but repeat some of them here for the ease of the reader.

For the rest of the paper we fix an algebraic closure $\Qbar$ of $\QQ$ that contains all of the algebraic extensions of $\QQ$ that we are going to consider. For an elliptic curve $E$ defined over a field $K$, we let 
$$E[n] = \{ P\in E(\overline{K}) : nP = \mathcal{O}\}$$
where $\overline{K}$ is a fixed algebraic closure of $K$. A classical result in the study of rational elliptic curves is that $E[n] \simeq \ZZ/n\ZZ \oplus \ZZ/n\ZZ$ as long as the characteristic of $K$ is relatively prime to $n$. If $L/K$ is a field extension, we write $E(L)[n]$ for the $n$-torsion subgroup of $E(L)$ and $E(L)(p)$ for the $p$-primary component of $E(L)$. Given a point $P\in E(L)$, we write $K(P)$ for the extension of $K$ generated by adjoining the coordinates of $P$ and $K(x(P))$ for the extension of $K$ generated by adjoining the $x$-coordinate of $P$. We point out here that while $x(P)$ itself depends on the choice of model for $E$, the field $K(x(P))$ is in fact independent of this choice.

Given an elliptic curve $E/K$, an $n$-\emph{isogeny} is a cyclic isogeny $\varphi\colon E\to E'$ of degree $n$; this means $\ker\varphi$ is a cyclic subgroup of $E[n]$, and as all the isogenies we consider are separable, this cyclic group has order $n$. The isogenies $\varphi$ that we consider are also \emph{rational}, meaning that $\varphi$ is defined over~$K$, equivalently, that $\ker\varphi$ is \emph{Galois-stable}: the action of $\Gal(\overline{K}/K)$ on $E[n]$ given by its action on the coordinates of the points $P\in E[n]$ permutes $\ker\varphi$. We consider two (separable) isogenies to be distinct or nonisomorphic only when their kernels are distinct.

If $E/\QQ$ is an elliptic curve and $n$ is a positive integer, then $\Gal(\Qbar/\QQ)$ acts on $E[n]$ component-wise and this action induces a continuous homomorphism
\[
\bar\rho_{E,n}\colon \Gal(\Qbar/\QQ)\to \Aut(E[n])\simeq \GL_2(\ZZ/n\ZZ),
\]
whose image we will view as a subgroup of $\GL_2(\ZZ/n\ZZ)$ up to conjugacy. The reason it is only defined up to conjugacy is that the isomorphism between $\Aut(E[n])$ and $\GL_2(\ZZ/n\ZZ)$ and depends on a choice of basis for $E[n]$. The homomorphism $\bar\rho_{E,n}$ is called the {\it mod $n$ Galois representation attached to $E$}. The extension $\QQ(E[n])/\QQ$ is Galois and the restriction of $\bar\rho_{E,n}$ to $\Gal(\QQ(E[n])/\QQ)$ is injective and so $\Gal(\QQ(E[n])/\QQ)$ is isomorphic to a subgroup of $\GL_2(\ZZ/n\ZZ)$; namely, $\Im\bar\rho_{E,n}$. The determinant map $\det\colon \Im\bar\rho_{E,n} \to (\ZZ/n\ZZ)^\times$ must be surjective and there must be an element in $\Im\bar\rho_{E,n}$ (corresponding to complex conjugation) with trace $0$ and determinant $-1$. For more information about Galois representations attached to elliptic curves the reader should consult \cite{serre}.

Given that the Galois groups that we are interested in studying are isomorphic to subgroups of $\GL_2(\ZZ/n\ZZ)$, it will be worthwhile to distinguish some subgroups of $\GL_2(\ZZ/n\ZZ)$ up to conjugation. In particular, two groups are worth highlighting:
\begin{enumerate}
\item the {\it{Borel group}} of upper triangular matrices,
\item the {\it{split Cartan}} subgroup of diagonal matrices.
\end{enumerate}
We mention these two groups because if $E/\QQ$ is an elliptic curve, then $E$ has a rational $n$-isogeny if and only if $\Im\bar\rho_{E,n}$ is conjugate to a subgroup of the Borel subgroup of $\GL_2(\ZZ/n\ZZ)$. Similarly, $E$ admits two independent $n$-isogenies if and only if $\Im\bar\rho_{E,n}$ is conjugate to a subgroup of the split Cartan subgroup of $\GL_2(\ZZ/n\ZZ)$.

\section{Groups and Fields of Generalized  \texorpdfstring{$D_4$}{D4}-type}\label{sec:D4GroupsAndFields}

In this section we study groups and fields of generalized $D_4$-type. Recall that a group is said to be of {\it{generalized $D_4$-type}} if it is isomorphic to a quotient of a subdirect product of transitive subgroups of $D_4$.

\begin{example}
Clearly the groups $\ZZ/4\ZZ$, $\ZZ/2\ZZ$ are all of generalized $D_4$-type. More interestingly, the quaternion group $Q_8$ is generalized $D_4$-type since $Q_8\simeq G/H$ with 
$$G = \langle(2, 4)(5, 6, 7, 8), (1, 2, 3, 4) \rangle, \ \ H = \langle     (1, 3)(2, 4)(5, 7)(6, 8) \rangle.$$
\end{example}

Going forward it will be useful to have necessary and sufficient conditions for a finite group $G$ to be of generalized $D_4$-type. 

\begin{lemma}\label{lem:D4Classification}
A finite group $G$ is of generalized $D_4$-type if and only if it has exponent dividing 4 and nilpotency class at most $2$.
\end{lemma}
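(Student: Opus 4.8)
The plan is to prove the two implications separately, handling the variety-theoretic content by hand. For the forward direction, note that $D_4$ has exponent $4$ and nilpotency class $2$ (its commutator subgroup is the central subgroup $\langle r^2\rangle$ of order $2$). Both ``exponent dividing $4$'' and ``nilpotency class at most $2$'' pass to subgroups, to finite direct products (the exponent of a product is the lcm of the exponents, and $\gamma_3$ of a product is the product of the $\gamma_3$'s, hence trivial), and to quotients. Since a group of generalized $D_4$-type is by definition a quotient of a subgroup of a finite power $D_4^k$, it inherits both properties, which gives one inclusion.

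For the converse I would \emph{not} try to embed $G$ directly into a power of $D_4$: this is impossible in general (for instance every homomorphism $Q_8\to D_4$ kills the central involution, yet $Q_8$ lies in the class), so one is forced to use the quotient in the definition. Instead, let $\mathcal V$ be the class of finite groups of exponent dividing $4$ and nilpotency class at most $2$, and let $F_n$ be the relatively free group of $\mathcal V$ on generators $x_1,\dots,x_n$. Every $G\in\mathcal V$ generated by $n$ elements is a quotient of $F_n$, so it suffices to realize each (finite) group $F_n$ as a subdirect product of transitive subgroups of $D_4$, after which $G$ inherits generalized $D_4$-type as a quotient.

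The key structural input is that commutators in $F_n$ are involutions: the class-$2$ power formula $(xy)^4=x^4y^4[y,x]^{6}$ together with $x^4=y^4=(xy)^4=1$ gives $[y,x]^6=1$, and since $[y,x]^4=1$ as well one gets $[y,x]^2=1$. Hence $F_n'$ and $Z(F_n)$ are elementary abelian $2$-groups, $F_n/F_n'\cong(\ZZ/4\ZZ)^n$, and a short centralizer computation shows $Z(F_n)=\langle x_1^2,\dots,x_n^2\rangle\cdot F_n'$ with $F_n/Z(F_n)\cong\FF_2^{\,n}$. I would then write down explicit homomorphisms $F_n\to D_4$, which exist and may be prescribed freely on the $x_i$ because $D_4\in\mathcal V$: the map sending $x_k\mapsto r$ and the other generators to $1$ (image $\langle r\rangle\cong\ZZ/4\ZZ$), and the map sending $x_k\mapsto s$, $x_l\mapsto rs$ and the others to $1$ (image all of $D_4$). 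On the distinguished generating set of $Z(F_n)$ these isolate a single coordinate $x_k^2$ or $[x_k,x_l]$ respectively, and the first family also detects the Frattini quotient; together they separate the points of $F_n$. This simultaneously shows that $\{x_i^2\}\cup\{[x_i,x_j]\}$ is a basis of $Z(F_n)$ and that the induced map $F_n\to\prod D_4$ is injective with every coordinate projection surjective onto a transitive subgroup ($\ZZ/4\ZZ$ or $D_4$). Thus $F_n$ is a subdirect product of transitive subgroups of $D_4$, completing the converse.

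The main obstacle is this converse, and within it the recognition that one must pass to the free cover $F_n$ rather than argue with $G$ itself (which need not embed in any power of $D_4$); once that is in place, the only genuine work is pinning down $Z(F_n)$ and verifying that the two explicit families of $D_4$-homomorphisms both separate points and have transitive image. The order-$2$ fact for commutators is precisely what makes the $\FF_2$-linear bookkeeping in the separation step go through cleanly.
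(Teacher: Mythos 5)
Your proof is correct, and in the one nontrivial direction it takes a genuinely different route from the paper's. Both arguments coincide in their reductions: the forward implication is the same (both properties are inherited by subgroups, direct products, and quotients of $D_4$), and both reduce the converse to showing that the relatively free group $F_n$ of exponent dividing $4$ and class at most $2$ embeds as a subdirect product of transitive subgroups of $D_4$, every finite group in the variety being a quotient of some $F_n$. The difference is how that embedding is produced. The paper (Lemma \ref{lem:D4Classification}) argues by induction on $n$: it embeds each subgroup $H_\ell\cong F_{n-1}$ generated by all generators but $x_\ell$, extends each embedding to $F_n$ by sending $x_\ell\mapsto 1$, and takes the product map. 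You instead write down a single uniform family of $n+\binom{n}{2}$ homomorphisms to $D_4$ --- the maps $x_k\mapsto r$ (image $\ZZ/4\ZZ$, detecting the exponent of $x_k$) and the maps $x_k\mapsto s,\ x_l\mapsto rs$ (image $D_4$, sending $[x_k,x_l]\mapsto r^2$ and every other distinguished generator of $\langle x_i^2\rangle F_n'$ to $1$) --- and verify separation of points directly on the normal form $x_1^{a_1}\cdots x_n^{a_n}\prod_{i<j}[x_i,x_j]^{b_{ij}}$. Your version buys two things. First, it is explicit and essentially optimal: for $n=3$ it lands in $D_4^{6}$, which the paper only reaches in the remark following Example \ref{ex:inducitonD4} as an ad hoc improvement on the $D_4^9$ produced by its induction. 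Second, and more substantively, it is uniform in $n$ and thereby sidesteps a genuine defect in the inductive step as written: when passing from one generator to two, the paper's two maps $\varphi_1,\varphi_2$ each kill the normal closure of the omitted generator, so $[x_1,x_2]\in\ker\varphi_1\cap\ker\varphi_2$ and the product map $G_2\to D_4^{2n}$ is \emph{not} injective; the claim that the common kernel is trivial only becomes valid once there are at least three generators (a third map $\varphi_p$ is injective on $\langle x_\ell,x_m\rangle$ and so sees $[x_\ell,x_m]$), and the two-generator case is in effect covered only by the explicit Magma-checked embedding $G_2\simeq\langle \tau_1\tau_2,\sigma_1\tau_3\rangle\subseteq D_4^3$ of Example \ref{ex:inducitonD4}. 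Your family of maps onto $D_4$ indexed by pairs plays exactly the role of that missing factor, for every pair of generators simultaneously, so no case distinction or inductive bootstrapping is needed. Your preliminary observation that $Q_8$ embeds in no power of $D_4$ correctly explains why one must realize $G$ as a quotient of the relatively free group rather than embed $G$ itself; the paper uses the same device without comment.
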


\begin{proof}
The necessity of these two conditions follows from the fact that $D_4$ has both of these properties and they are preserved under taking subgroups, direct products and quotients. 

What is left to prove is that these two conditions are sufficient to conclude that $G$ is of generalized $D_4$-type. We prove this by showing that the free group of nilpotency class 2 and exponent 4 on $k$ generators is isomorphic to a subgroup of $D_4^n$ for some $n$. We will denote the free group of nilpotency class 2 and exponent 4 on $k$ generators by $G_k$. Proving this is enough since any finite group that has nilpotency class at most 2 and exponent 4 is a quotient of one of these groups. We will proceed by induction on the number of generators. 
\medskip

\noindent
{Base case}: The group $G_1 \simeq \ZZ/4\ZZ$ which is clearly of generalized $D_4$-type.

\medskip

\noindent
{Inductive Assumption}: Suppose that $k \geq 1$ and $G_k$ is isomorphic to a subgroup of $D_4^n$ for some $n$. Let $x_1,\dots  x_{k+1}$ be the generators of $G_{k+1}$ and let $H_\ell$ be the smallest subgroup of $G_{k+1}$ containing all the generators except $x_\ell$. Notice that since $G_{k+1}$ is nilpotent of degree 2, the commutator subgroup of $G_{k+1}$ is contained in its center. Combining this with the fact that $G_{k+1}$ is exponent 4, we get that every element in $G_{k+1}$ can be written uniquely in the form 
$$x_1^{a_1}\cdots x_{k+1}^{a_{k+1}}[x_1,x_2]^{b_1}[x_1,x_3]^{b_2}\cdots[x_k,x_{k+1}]^{b_m},$$
where $[x_i,x_j]$ is the commutator of $x_i$ and $x_j$, $a_i\in \{0,1,2,3\}$, and $b_j\in \{0,1\}$.

Since all the relations defining these groups are symmetric, for each $1\leq\ell\leq k+1$, $H_\ell\simeq G_k$ and so by assumption $H_\ell$ is isomorphic to a subgroup of $D_4^n$. Let $\varphi_\ell\colon H_\ell \to D_4^n$ be an injective homomorphism, so that $H_\ell \simeq \Im( \varphi_\ell ) \subseteq D_4^n$. We extend each $\varphi_\ell$ to a homomorphism on all of $G_{k+1}$ by declaring $\varphi_\ell(x_\ell)$ to be trivial.

Using these we define a map $\psi\colon G_{k+1} \to (D_4^n)^{k+1}$ by $\psi(x) = \big(\varphi_1(x),\varphi_2(x), \dots, \varphi_{k+1}(x)\big)$. Clearly this is a homomorphism since each $\varphi_\ell$ is a homomorphism. The map $\psi$ is injective since $\ker(\varphi_\ell) = \langle x_\ell, [x_\ell,x_1],\dots, [x_\ell,x_{k+1}]\rangle$ and so the only element in $G_{k+1}$ that gets sent to the identity by every $\varphi_\ell$ is the identity element. 
\end{proof}

\begin{example}\label{ex:inducitonD4}
The proof of Lemma \ref{lem:D4Classification} is constructive and so we can compute explicitly the isomorphisms in each case. The group $G_2$ has the following presentation:
\begin{align*}
G_2 &= \langle x_1,x_2\mid x_1^4=x_2^4=(x_1x_2)^4 =[x_1,x_2]^2= e, [x_1,[x_1,x_2]]=[x_2,[x_1,x_2]]=e\rangle\\ 
\end{align*}
and is \href{https://groupprops.subwiki.org/wiki/SmallGroup(32,2)}{SmallGroup(32,2)} in Magma. Let $H = D_4^3$ and let $\tau_i$ and $\sigma_i$ be the elements of order 4 and 2 respectively that generate the $i$-th copy of $D_4$. It is an easy check to show that $G_2 \simeq \langle \tau_1\tau_2,\sigma_1\tau_3\rangle.$ AIf we let $G_{3}$ be generated by $x_1,x_2,$ and $x_3$, then 
\begin{align*}
H_1&= \langle x_2,x_3 \rangle \simeq \langle \tau_1\tau_2,\sigma_1\tau_3 \rangle,\\
H_2&= \langle x_3, x_1 \rangle \simeq \langle \tau_4\tau_5,\sigma_4\tau_6 \rangle,\\
H_3&= \langle x_1,x_2 \rangle \simeq \langle \tau_7\tau_8,\sigma_7\tau_9 \rangle.
\end{align*}
Again, it is a simple check in Magma verifies that
$$G_3 \simeq \big\langle \sigma_4\tau_6\tau_7\tau_8, \tau_1\tau_2\sigma_7\tau_9, \sigma_1\tau_3\tau_4\tau_5 \big\rangle.$$
\end{example}

\begin{remark}
The embedding of $G_3$ into $D_4^9$ is far from optimal. In fact, $G_3$ is a subgroup of a much smaller power of $D_4$. The smallest possible power it embeds into is 6 with
$$G_3\simeq \big\langle \tau_1\tau_2\tau_3,\sigma_2\sigma_3\tau_4\tau_5,\sigma_2\sigma_4\tau_5\tau_6\big\rangle.$$
\end{remark}

\begin{example}
Clearly, $\ZZ/8\ZZ$ is not of generalized $D_4$-type since its exponent is 8. On the other hand \href{https://groupprops.subwiki.org/wiki/Faithful_semidirect_product_of_E8_and_Z4}{SmallGroup(32,6)}, the faithful semidirect product of $(\ZZ/2\ZZ)^3$ and $\ZZ/4\ZZ$, has exponent 4 but nilpotency class 3 and so is not of generalized $D_4$-type. 
\end{example}

Let $K/\QQ$ be a number field. We will denote the Galois closure of $K$ by $\widehat{K}$.

\begin{remark}\label{remark:GaloisExtension}
It is clear from basic Galois theory that the compositum of any two fields of generalized $D_4$-type, is also of generalized $D_4$-type and any subfield of a field of generalized $D_4$-type will also be of generalized $D_4$-type. Since every finite extension $K\subseteq \QQ(D_4^\infty)$ over $\QQ$ is contained in a finite Galois extension $\widehat{K}/\QQ$ of generalized $D_4$-type, we can view $\QQ(D_4^\infty)$ as the compositum of Galois extensions and thus the infinite extension $\QQ(D_4^\infty)/\QQ$ is also Galois.
\end{remark}

\begin{lemma}\label{lem:QuadraticSubfield}
Given any square-free $d\in \ZZ$,  $\QQ(\sqrt{d})\subseteq\QQ(D_4^\infty)$.
\end{lemma}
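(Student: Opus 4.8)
The plan is to deduce this immediately from the group-theoretic classification in Lemma~\ref{lem:D4Classification}, combined with the description of $\QQ(D_4^\infty)$ as the compositum of all fields of generalized $D_4$-type (Definition~\ref{def:Q(G^infty)} together with the remark following Theorem~\ref{thm-D4=F}). The point is that a quadratic field is its own Galois closure, and the group $\ZZ/2\ZZ$ trivially satisfies the two numerical conditions that characterize generalized $D_4$-type.

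First I would dispose of the degenerate case: since $d$ is square-free, the only way $d$ can be a perfect square is $d=1$, in which case $\QQ(\sqrt d)=\QQ\subseteq\QQ(D_4^\infty)$. So I may assume $\QQ(\sqrt d)/\QQ$ is a genuine quadratic extension. Being of degree $2$, it is automatically Galois, so its Galois closure is itself and $\Gal(\QQ(\sqrt d)/\QQ)\simeq\ZZ/2\ZZ$. The group $\ZZ/2\ZZ$ has exponent $2$, which divides $4$, and is abelian, hence of nilpotency class $1\le 2$; by Lemma~\ref{lem:D4Classification} it is therefore of generalized $D_4$-type. Consequently $\QQ(\sqrt d)$ is a field of generalized $D_4$-type, and by the definition of $\QQ(D_4^\infty)$ as the compositum of all such fields we conclude $\QQ(\sqrt d)\subseteq\QQ(D_4^\infty)$.

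Because the argument is a direct application of the definition, there is essentially no obstacle to overcome; the only facts that need verifying are the elementary observations that a quadratic extension equals its own Galois closure and that $\ZZ/2\ZZ$ meets the exponent and nilpotency-class conditions of Lemma~\ref{lem:D4Classification}.

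For readers who prefer an explicit witness inside an honest $D_4$-extension, in the constructive spirit of Theorem~\ref{thm-D4=F}, one can argue as follows. For square-free $d>1$ the polynomial $x^4-d$ is irreducible and its splitting field $\QQ(\sqrt[4]{d},i)$ is a $D_4$-extension of $\QQ$ containing $\QQ(\sqrt d)$; for $d=-e$ with $e>1$ square-free, the field $\QQ(\sqrt[4]{e},i)$ is again a $D_4$-extension, and its three quadratic subfields are $\QQ(\sqrt e)$, $\QQ(i)$, and $\QQ(\sqrt{-e})=\QQ(\sqrt d)$. The only obstacle on this route is the degenerate value $d=-1$, where $\QQ(\sqrt d)=\QQ(i)$ does not arise from $x^4-d$; here one instead notes that $\QQ(i)\subseteq\QQ(\sqrt[4]{2},i)$, which is once more a $D_4$-extension. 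I would present the abstract argument as the main proof and record this construction only as a remark, since the abstract route avoids all case analysis.
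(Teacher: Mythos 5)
Your proposal is correct and follows essentially the same route as the paper, whose entire proof is the one-line observation that $\Gal(\QQ(\sqrt{d})/\QQ)\simeq\ZZ/2\ZZ$ is of generalized $D_4$-type; you simply make explicit the appeal to Lemma~\ref{lem:D4Classification} and handle the degenerate case $d=1$. The explicit $D_4$-extension construction you append as a remark is a fine sanity check but is not needed, exactly as you say.
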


\begin{proof}
For any such $d$, $\Gal(\QQ(\sqrt{d})/\QQ)\simeq\ZZ/2\ZZ$ which is of generalized $D_4$-type. 
\end{proof}

Going forward it will also be important to know which roots of unity are contained in $\QQ(D_4^\infty)$. Fortunately, using Lemma \ref{lem:D4Classification} classifying these subfields is fairly simple. 

\begin{lemma}\label{lem:CyclotomicSubfields}
Let $n$ be a natural number. Then,  $\QQ(\zeta_n)\subseteq\QQ(D_4^\infty)$ if and only if $n$ divides 240.
\end{lemma}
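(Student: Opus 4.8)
The plan is to translate the field-theoretic containment into the purely group-theoretic criterion supplied by Lemma \ref{lem:D4Classification}. Since $\QQ(\zeta_n)/\QQ$ is abelian, and in particular Galois, the field $\QQ(\zeta_n)$ coincides with its own Galois closure. Hence, by Remark \ref{remark:GaloisExtension} (a finite subfield of $\QQ(D_4^\infty)$ lies in a compositum of finitely many fields of generalized $D_4$-type and is therefore itself of generalized $D_4$-type, while conversely any field of generalized $D_4$-type is one of the factors of the compositum defining $\QQ(D_4^\infty)$), the containment $\QQ(\zeta_n)\subseteq\QQ(D_4^\infty)$ holds if and only if $\Gal(\QQ(\zeta_n)/\QQ)\simeq(\ZZ/n\ZZ)^\times$ is a group of generalized $D_4$-type.

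By Lemma \ref{lem:D4Classification}, being of generalized $D_4$-type means having exponent dividing $4$ and nilpotency class at most $2$. The nilpotency condition is automatic here because $(\ZZ/n\ZZ)^\times$ is abelian. Thus the whole statement reduces to determining exactly which $n$ make the exponent of $(\ZZ/n\ZZ)^\times$ divide $4$.

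To finish, I would compute this exponent through the Chinese Remainder Theorem. Writing $n=\prod_i p_i^{e_i}$ gives $(\ZZ/n\ZZ)^\times\simeq\prod_i(\ZZ/p_i^{e_i}\ZZ)^\times$, so its exponent is the $\lcm$ of the exponents of the factors. For an odd prime $p$, the factor $(\ZZ/p^e\ZZ)^\times$ is cyclic of order $p^{e-1}(p-1)$, and this order divides $4$ only when $p-1\mid 4$ (which rules out every $p\ge 7$) and $e=1$ (since for $e\ge 2$ the odd prime $p$ itself divides the order, yet $p\nmid 4$); this leaves precisely $p^e\in\{3,5\}$. For $p=2$ one checks directly that $(\ZZ/2^e\ZZ)^\times$ has exponent dividing $4$ exactly when $e\le 4$, the first failure being $(\ZZ/32\ZZ)^\times\simeq\ZZ/2\ZZ\times\ZZ/8\ZZ$ of exponent $8$. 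Combining these prime-power constraints shows that the exponent divides $4$ if and only if $n$ divides $2^4\cdot 3\cdot 5=240$, and conversely every divisor of $240$ satisfies all of them.

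The argument is essentially routine once the reduction to the exponent condition is in place; the only step requiring genuine care is the case analysis for the $2$-part, since $(\ZZ/2^e\ZZ)^\times$ is noncyclic for $e\ge 3$ and one must track its \emph{exponent} rather than its order to see that $e=4$ (the factor $16$) is permitted while $e=5$ is not.
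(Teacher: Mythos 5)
Your proposal is correct and takes essentially the same route as the paper: both reduce, via Lemma \ref{lem:D4Classification} (nilpotency being automatic for the abelian group $(\ZZ/n\ZZ)^\times$), to the condition that the exponent of $(\ZZ/n\ZZ)^\times$ divides $4$, and then compute that exponent prime power by prime power to obtain $n \mid 240$. The only difference is cosmetic: the paper quotes the classical Carmichael-function formulas $\lambda(2^e)=2^{e-2}$ and $\lambda(p^e)=(p-1)p^{e-1}$ and leaves the ``only if'' direction largely implicit, whereas you derive the exponents directly and spell out both implications.
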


\begin{proof}
Let $\lambda(n)$ be the exponent of $\Gal(\zeta_n/\QQ)\simeq(\ZZ/n\ZZ)^\times$. It is a classical result that $\lambda(2^e) = 2^{e-2}$ and $\lambda(p^e) = (p-1)p^{e-1}$ for primes $p>2$. It follows that if $\ell$ is prime then $\lambda(\ell^j)$ divides 4  if and only $\ell^j\in\{2, 3, 4, 5, 8, 16 \}$. In these cases $\Gal(\QQ(\zeta_n)/\QQ)\simeq (\ZZ/n\ZZ)^\times$ which has nilpotency class 1 and so Lemma \ref{lem:D4Classification} shows that $\QQ(\zeta_n)\subseteq\QQ(D_4^\infty).$
\end{proof}

\section{Growth of the torsion subgroups of elliptic curves by base extension}\label{sec:GrowthOfTorsion}

In this section, we present results about the fields of definition of torsion subgroups of elliptic curves that will be useful throughout the rest of the paper. 

\begin{prop}\cite[Ch. III, Cor. 8.1.1]{silverman}\label{prop:ContiansCyclo}
Let $E/L$ be an elliptic curve with $L\subseteq \overline{\QQ}.$ For each integer $n\geq 1$, if $E[n]\subseteq E(L)$ then the $n$th cyclotomic field $\QQ(\zeta_n)$ is a subfield of $L$.
\end{prop}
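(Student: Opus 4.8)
The plan is to prove this via the Weil pairing. Since $L\subseteq\overline{\QQ}$ has characteristic $0$, for each $n\geq 1$ there is a pairing
\[
e_n\colon E[n]\times E[n]\to \mu_n
\]
that is bilinear, alternating, non-degenerate, and—crucially for us—Galois-equivariant, in the sense that $e_n(P^\sigma,Q^\sigma)=e_n(P,Q)^\sigma$ for every $\sigma\in\Gal(\overline{L}/L)$. First I would recall these standard properties, emphasizing the consequence of non-degeneracy (equivalently, the surjectivity of $e_n$ onto $\mu_n$) that if $\{P,Q\}$ is a $\ZZ/n\ZZ$-basis for $E[n]\simeq \ZZ/n\ZZ\oplus\ZZ/n\ZZ$, then $e_n(P,Q)$ is a \emph{primitive} $n$th root of unity.

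Next, the hypothesis $E[n]\subseteq E(L)$ says exactly that every $n$-torsion point is defined over $L$, i.e.\ is fixed by $\Gal(\overline{L}/L)$. Fixing a basis $\{P,Q\}$ of $E[n]$, we therefore have $P^\sigma=P$ and $Q^\sigma=Q$ for all $\sigma\in\Gal(\overline{L}/L)$. Feeding this into Galois-equivariance gives
\[
e_n(P,Q)^\sigma=e_n(P^\sigma,Q^\sigma)=e_n(P,Q)
\]
for every $\sigma$, so the value $\zeta:=e_n(P,Q)$ lies in the fixed field $L$. Since $\zeta$ is a primitive $n$th root of unity by the previous paragraph, we conclude $\QQ(\zeta_n)=\QQ(\zeta)\subseteq L$, which is the claim.

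The only real content is the primitivity assertion, so I expect the main obstacle to be establishing (or correctly citing) the non-degeneracy of $e_n$ together with its surjectivity onto $\mu_n$; the remainder is a short formal manipulation. The cleanest justification of primitivity is the identity $e_n(aP+bQ,\,cP+dQ)=e_n(P,Q)^{ad-bc}$, which shows the image of the whole pairing is the cyclic group generated by $\zeta$; surjectivity onto $\mu_n$ then forces $\zeta$ to have exact order $n$. This surjectivity is the standard non-degeneracy property of the Weil pairing, and everything else follows immediately.
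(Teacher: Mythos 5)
Your proof is correct and is essentially the argument the paper relies on: the statement is cited from Silverman (Ch.~III, Cor.~8.1.1), whose proof is exactly this Weil pairing argument—Galois equivariance plus non-degeneracy/surjectivity of $e_n$ forces a primitive $n$th root of unity to be fixed by $\Gal(\overline{L}/L)$, hence to lie in $L$. Your justification of primitivity via $e_n(aP+bQ,cP+dQ)=e_n(P,Q)^{ad-bc}$ is also the standard one, so there is no gap.
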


\begin{theorem}\label{thm:finiteness}\cite[Theorem 4.1]{Q(3)}
Let $E/\QQ$ be an elliptic curve and let $F$ be a (possibly infinite) Galois extension of $\QQ$ that only contains finitely many roots of unity. Then $E(F)_\tor$ is finite. Moreover, there is a uniform bound $B$, depending only on $F$, such that $\#E(F)_\tor\leq B$ for every elliptic curve $E/\QQ$.
\end{theorem}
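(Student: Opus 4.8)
The plan is to bound each $\ell$-primary component $E(F)(\ell)$ separately, with a bound that is absolute in $E$ for the cyclic part and controlled by the roots of unity in $F$ for the remaining ``square'' part, and then to show that only finitely many primes $\ell$ can occur at all. The device used throughout is that $F/\QQ$ is Galois: if $P\in E(F)$ then every conjugate $\sigma(P)$ again lies in $E(F)$, so the subgroup $C_P:=\langle \sigma(P):\sigma\in\Gal(\Qbar/\QQ)\rangle$ is a $\Gal(\Qbar/\QQ)$-stable subgroup of $E[\ell^k]$ that is entirely contained in $E(F)$. Equivalently, $C_P$ is the kernel of an isogeny defined over $\QQ$, all of whose points are rational over $F$.

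First I would bound the set of primes dividing $\#E(F)_\tor$. If $\ell\mid \#E(F)_\tor$, choose $P\in E(F)$ of order $\ell$ and form $C_P\subseteq E[\ell]$. As a $\Gal$-stable subgroup of $E[\ell]\simeq(\ZZ/\ell\ZZ)^2$, either $C_P$ is cyclic of order $\ell$, in which case $E$ admits a rational $\ell$-isogeny and $\ell$ lies in Mazur's finite list of isogeny primes (an absolute set), or $C_P=E[\ell]$, in which case $E[\ell]\subseteq E(F)$ and Proposition \ref{prop:ContiansCyclo} forces $\QQ(\zeta_\ell)\subseteq F$, bounding $\ell$ since $F$ contains only finitely many roots of unity. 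Hence the primes dividing $\#E(F)_\tor$ lie in a finite set $S=S(F)$.

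Next, fix $\ell\in S$ and bound the exponent of $E(F)(\ell)$. Take $P\in E(F)$ of maximal order $\ell^b$ and form $C_P\subseteq E[\ell^b]$, so that $C_P\simeq \ZZ/\ell^c\ZZ\oplus\ZZ/\ell^b\ZZ$ for some $0\le c\le b$. Because the $\ell^c$-torsion of $C_P$ is exactly $E[\ell^c]$, we get $E[\ell^c]\subseteq E(F)$, and Proposition \ref{prop:ContiansCyclo} again bounds $\ell^c$ through the roots of unity of $F$. To bound $b-c$, I would pass to the rational isogeny $\phi\colon E\to E':=E/E[\ell^c]$, where $E'\simeq E$ over $\QQ$; the image $\phi(C_P)\simeq \ZZ/\ell^{b-c}\ZZ$ is a $\Gal$-stable cyclic subgroup contained in $E'(F)$, so $E'$ carries a rational cyclic $\ell^{b-c}$-isogeny, and the Mazur--Kenku classification of rational points on $X_0(\ell^{b-c})$ bounds $\ell^{b-c}$ by an absolute constant. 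Thus $\ell^b$ is bounded by the product of an $F$-dependent factor and an absolute one.

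Assembling, $\#E(F)_\tor=\prod_{\ell\in S}\#E(F)(\ell)$ with $S$ finite and each factor bounded by a quantity depending only on $F$ (through its roots of unity) together with the absolute isogeny constants, which yields the uniform bound $B=B(F)$. The main obstacle is the bound on the cyclic exponent: a point of large $\ell$-power order over the infinite field $F$ need not itself generate the kernel of a rational isogeny, and the maneuver of quotienting by the \emph{rational} subgroup $E[\ell^c]$ --- which the roots-of-unity hypothesis forces to be small --- is precisely what converts the surviving cyclic part into a genuine rational isogeny on a curve isomorphic to $E$, where Mazur--Kenku applies. The care needed is in checking that all the subgroups in play remain defined over $\QQ$ after quotienting and that the resulting bound is genuinely independent of $E$.
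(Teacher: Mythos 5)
Your proposal is correct and takes essentially the same route as the paper's proof (that of \cite[Theorem 4.1]{Q(3)}, which this paper cites): your Galois-stable subgroup $C_P$ and the passage to the quotient $E/E[\ell^c]\simeq E$ is precisely the mechanism behind Lemma \ref{lem:isogp^j-k}, and your assembly combines the Weil-pairing bound (Proposition \ref{prop:ContiansCyclo}) with the Mazur--Kenku isogeny bound (Theorem \ref{thm:IsoTypes}) exactly as in the cited argument.
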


Theorem \ref{thm:finiteness} together with Lemma \ref{lem:CyclotomicSubfields} shows that if $E/\QQ$ is an elliptic curve, then $E(\QQ(D_4^\infty))_\tor$ must be finite and immediately limits the possible $n$'s such that $E[n]\subseteq E(\QQ(D_4^\infty))$. Below we present the main reults used to prove Theorem \ref{thm:finiteness} as they will be useful in the next few sections.

\begin{lemma}\cite[Lemma 4.6]{Q(3)}\label{lem:isogp^j-k}
Let $E$ and $F$ be as in Theorem \ref{thm:finiteness}, let $p$ be a prime, and let $k$ be the largest integer for which $E[p^k]\subseteq E(F)$. If $E(F)_\tor$ contains a subgroup isomorphic to $\ZZ/p^k\ZZ\oplus \ZZ/p^j\ZZ$ with $j\geq k$, then $E$ admits a rational $p^{j-k}$-isogeny.
\end{lemma}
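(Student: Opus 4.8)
The plan is to translate the conclusion into the language of Galois-stable subgroups and then exhibit the required subgroup explicitly inside the $F$-rational torsion. Recall from Section~\ref{sec:galoisreps} that $E$ admits a rational $p^{j-k}$-isogeny precisely when $E$ has a cyclic subgroup of order $p^{j-k}$ that is stable under the action of $G := \Gal(\Qbar/\QQ)$ (equivalently, when $\Im\bar\rho_{E,p^{j-k}}$ is conjugate into the Borel subgroup). So it suffices to produce one such subgroup.

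First I would set $W := E(F)\cap E[p^j]$, the group of $p^j$-torsion points rational over $F$, and determine its structure exactly. On one hand $E[p^k]\subseteq E(F)$ gives $E[p^k]\subseteq W$, so the smaller invariant factor of $W$ is at least $p^k$; the maximality of $k$ forbids $E[p^{k+1}]\subseteq W$, so that invariant factor is exactly $p^k$. On the other hand the assumed subgroup $\ZZ/p^k\ZZ\oplus\ZZ/p^j\ZZ\subseteq E(F)_\tor$ contributes a point of order $p^j$ to $W$, while $W\subseteq E[p^j]$ bounds its exponent by $p^j$. Together these force $W\simeq \ZZ/p^k\ZZ\oplus\ZZ/p^j\ZZ$. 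The key structural input is then that $W$ is $G$-stable: since $F/\QQ$ is Galois (as in Theorem~\ref{thm:finiteness}), $E(F)$ is preserved by every $\sigma\in G$, and $E[p^j]$ is preserved as well, so their intersection $W$ is $G$-stable.

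With $W$ in hand I would take $C := p^k W$. Multiplying the decomposition $W\simeq \ZZ/p^k\ZZ\oplus\ZZ/p^j\ZZ$ by $p^k$ annihilates the first summand and leaves a copy of $\ZZ/p^{j-k}\ZZ$, so $C$ is cyclic of order $p^{j-k}$; and because multiplication-by-$p^k$ commutes with the Galois action, $C$ inherits $G$-stability from $W$. Thus $E\to E/C$ is a $p^{j-k}$-isogeny defined over $\QQ$, as required. I expect the only real subtlety to be the second step: using the maximality of $k$ together with the given subgroup to pin down the exact isomorphism type of $W$, since it is precisely this that guarantees $p^kW$ is cyclic of the correct order rather than a larger or non-cyclic group. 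The other point I would be careful to flag is that it is the Galois hypothesis on $F$, not merely its containing finitely many roots of unity, that upgrades stability under $\Gal(\Qbar/F)$ to stability under all of $G$.
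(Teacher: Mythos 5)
Your proof is correct and is essentially the same argument as the one behind \cite[Lemma 4.6]{Q(3)} (which this paper cites rather than reproves): identify $W=E(F)\cap E[p^j]\simeq \ZZ/p^k\ZZ\oplus\ZZ/p^j\ZZ$ using the maximality of $k$, observe that $W$ is $\Gal(\Qbar/\QQ)$-stable because $F/\QQ$ is Galois, and conclude that $C=p^kW$ is a Galois-stable cyclic subgroup of order $p^{j-k}$, hence the kernel of a rational $p^{j-k}$-isogeny. The cited proof just phrases the same idea with a chosen generator $P$ of order $p^j$, showing $\langle p^kP\rangle=p^kW$ is stable, so there is nothing to add.
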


\begin{theorem}[\cite{mazur2},\cite{kenku2},\cite{kenku3},\cite{kenku4},\cite{kenku5}]\label{thm:IsoTypes}
Let $E/\QQ$ be an elliptic curve with a rational $n$-isogeny. Then 
\[
n\leq 19\hbox{ or }n\in\{21,25,27,37,43,67,163\}.
\]
\end{theorem}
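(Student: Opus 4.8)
The plan is to translate the existence of a rational $n$-isogeny into the existence of a non-cuspidal rational point on the modular curve $X_0(n)$, whose non-cuspidal points parametrize pairs $(E,C)$ consisting of an elliptic curve $E$ together with a cyclic subgroup $C$ of order $n$ stable under the action of $\Gal(\Qbar/\QQ)$. Under this dictionary the theorem becomes the assertion that $X_0(n)(\QQ)$ contains a non-cuspidal point precisely for the listed values of $n$, and the proof is really a survey of the deep rational-points computations on these curves carried out in \cite{mazur2} and \cite{kenku2,kenku3,kenku4,kenku5}.

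First I would dispose of the prime case, which supplies the deepest input. By Mazur's theorem on rational isogenies of prime degree \cite{mazur2}, if $E/\QQ$ admits a rational $p$-isogeny for a prime $p$, then $p\le 19$ or $p\in\{37,43,67,163\}$. This is obtained by studying the Eisenstein quotient of the Jacobian of $X_0(p)$, showing its Mordell--Weil group is finite, and thereby exhausting the rational points of $X_0(p)$ by the cusps together with a short list of exceptional points.

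Next I would use the following divisibility reduction. A rational $n$-isogeny $\varphi\colon E\to E'$ with cyclic kernel of order $n$ induces, for every $d\mid n$, a rational $d$-isogeny, since the unique order-$d$ subgroup of $\ker\varphi$ is again Galois-stable. In particular every prime $p\mid n$ is the degree of a rational isogeny, so by Mazur's theorem the only primes dividing $n$ are $2,3,5,7,11,13,17,19,37,43,67$, and $163$. Moreover each prime power $p^a\,\|\,n$ forces a rational $p^a$-isogeny, and one pins down the maximal admissible exponent of each prime by analyzing $X_0(p^a)$: the curves $X_0(2^a),X_0(3^a),X_0(5^a)$ carry non-cuspidal rational points exactly for $a\le 4$, $a\le 3$, $a\le 2$, while $X_0(p^2)$ has none for $p\ge 7$ (for instance $X_0(49)$ is a rank-zero elliptic curve whose rational points are all cuspidal). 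This bounds the prime powers appearing to $16,27,25,7$, and the remaining primes $11,13,17,19,37,43,67,163$ to the first power.

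Finally, the bulk of the argument is the composite case of \cite{kenku2,kenku3,kenku4,kenku5}: having restricted the admissible prime powers, one must decide which products occur simultaneously, that is, which composite $n$ yield a non-cuspidal point on $X_0(n)$. The genus-zero curves $X_0(n)$ (for $n\le 10$ and $n=12,13,16,18,25$) contribute infinitely many rational points and cause no trouble, but for each remaining composite one must determine the finitely many rational points on a modular curve of positive genus, typically by exhibiting a dominant map to an elliptic curve of rank zero or to a lower-level quotient whose only rational points are cuspidal, and then inspecting the fibers. This is where near-misses such as $n=20,22,24,26,\dots$ are eliminated while the sporadic values $21,27,37,43,67,163$ survive. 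I expect this composite analysis --- the explicit determination of rational points on each positive-genus $X_0(n)$ --- to be the main obstacle, as it constitutes the substance of the combined Mazur--Kenku classification.
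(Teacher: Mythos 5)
The paper gives no proof of this theorem at all---it is imported wholesale from Mazur \cite{mazur2} and Kenku \cite{kenku2,kenku3,kenku4,kenku5}---and your outline is a faithful, accurate summary of exactly how those cited works establish it: Mazur's Eisenstein-quotient argument for prime level, the divisibility reduction forcing every prime power dividing $n$ to itself be an isogeny degree, and the case-by-case determination of rational points on the positive-genus curves $X_0(n)$ at composite and prime-power level. So your proposal is correct and takes essentially the same route as the paper, namely deferring the deep rational-point computations to the Mazur--Kenku literature.
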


With these results, it is clear that it will be helpful to better understand the field of definition of the kernel a given isogeny.  

\begin{lemma}\cite[Lemma 4.8]{Q(3)}\label{lem:CyclicIso}
Let $E/\QQ$ be an elliptic curve that admits a rational $n$-isogeny $\varphi$, and let $R\in E[n]$ be a point of order $n$ in the kernel of $\varphi$. The field extension $\QQ(R)/\QQ$ is Galois and $\Gal(\QQ(R)/\QQ)$ is isomorphic to a subgroup of $(\ZZ/n\ZZ)^\times$. In particular, if $n$ is prime, then $\Gal(\QQ(R)/\QQ)$ is cyclic and its order divides $n-1$.
\end{lemma}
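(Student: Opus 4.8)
The plan is to prove Lemma \ref{lem:CyclicIso} by analyzing the Galois action on the cyclic kernel $\langle R\rangle$ of the isogeny $\varphi$. First I would observe that since $\varphi$ is a \emph{rational} isogeny, its kernel $C = \ker\varphi = \langle R\rangle$ is Galois-stable: for every $\gamma \in \Gal(\Qbar/\QQ)$ we have $\gamma(C) = C$. Consequently, for each such $\gamma$, the point $\gamma(R)$ is again a generator of the cyclic group $C$ of order $n$, so there exists a unique $c(\gamma) \in (\ZZ/n\ZZ)^\times$ with $\gamma(R) = c(\gamma)\,R$.

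Next I would verify that $\gamma \mapsto c(\gamma)$ defines a homomorphism $\chi\colon \Gal(\Qbar/\QQ) \to (\ZZ/n\ZZ)^\times$. This is a routine cocycle-type computation: for $\gamma, \delta \in \Gal(\Qbar/\QQ)$, applying $\gamma$ to $\delta(R) = c(\delta)R$ and using that $c(\delta)$ is an integer fixed by $\gamma$ gives $(\gamma\delta)(R) = c(\gamma)c(\delta)R$, whence $c(\gamma\delta) = c(\gamma)c(\delta)$. The kernel of $\chi$ consists exactly of those $\gamma$ fixing $R$, i.e. the automorphisms fixing $\QQ(R)$ pointwise, so $\Ker\chi = \Gal(\Qbar/\QQ(R))$. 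In particular $\QQ(R)/\QQ$ is the fixed field of a \emph{normal} subgroup (being a kernel), hence $\QQ(R)/\QQ$ is Galois, and the induced injection identifies $\Gal(\QQ(R)/\QQ)$ with a subgroup of the abelian group $(\ZZ/n\ZZ)^\times$.

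For the final sentence, when $n$ is prime the group $(\ZZ/n\ZZ)^\times$ is cyclic of order $n-1$, so any subgroup is cyclic and its order divides $n-1$ by Lagrange's theorem. I do not expect any serious obstacle here; the main point to be careful about is simply confirming that $\gamma(R)$ lands in $\langle R\rangle$ rather than merely in $E[n]$, which is precisely where the rationality (Galois-stability of $\ker\varphi$) hypothesis is used, together with the fact that $R$ has exact order $n$ so that its image under $\gamma$ is a generator and not a proper multiple. The one subtlety worth stating explicitly is the well-definedness of $c(\gamma)$ as an element of $(\ZZ/n\ZZ)^\times$ (rather than just $\ZZ/n\ZZ$): this follows because $\gamma$ is invertible, so $\gamma(R)$ also has exact order $n$ and therefore $c(\gamma)$ is a unit modulo $n$.
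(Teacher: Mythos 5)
Your proof is correct and follows essentially the same route as the cited source \cite[Lemma 4.8]{Q(3)} (this paper itself gives no proof, only the citation): Galois-stability of the cyclic kernel $\langle R\rangle$ yields a character $\Gal(\Qbar/\QQ)\to(\ZZ/n\ZZ)^\times$ whose kernel is $\Gal(\Qbar/\QQ(R))$, so that $\QQ(R)/\QQ$ is Galois with group embedding into $(\ZZ/n\ZZ)^\times$. Your attention to the two subtle points --- that $\gamma(R)$ lies in $\langle R\rangle$ by rationality of $\varphi$ and that $c(\gamma)$ is a unit because $\gamma(R)$ again has exact order $n$ --- is exactly what the argument requires.
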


\begin{prop}\cite[Theorem 2.1]{lozano1}\label{prop:FieldOfDef}
Let $E/\QQ$ be an elliptic curve and let $p\geq 11$ be a prime, other than $13$. Let $R\in E[p]$ be a torsion point of exact order $p$ and let $\QQ(R) = \QQ(x(R),y(R))$ be the field of definition of $R$. Then
\[
[\QQ(R):\QQ] \geq \frac{p-1}{2}
\]
unless $j(E) = -7\cdot 11^3$ and $p=37$, in which case $[\QQ(R):\QQ]\geq (p-1)/3 = 12$.
\end{prop}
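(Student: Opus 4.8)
The plan is to reinterpret the degree $[\QQ(R):\QQ]$ as the size of a Galois orbit and then bound it through the classification of subgroups of $\GL_2(\FF_p)$. Write $G = \Im\bar\rho_{E,p}\subseteq\GL_2(\FF_p)$, so that $\QQ(E[p])/\QQ$ is Galois with group $G$. Fixing the point $R$ amounts to fixing its coordinates, so Galois theory identifies $\QQ(R)$ with the fixed field of the stabilizer of the vector $R\in\FF_p^2\setminus\{0\}$, and the orbit--stabilizer theorem gives
\[
[\QQ(R):\QQ]=[G:\mathrm{Stab}_G(R)]=\#(G\cdot R).
\]
The one structural fact I would use throughout is that $\det\colon G\to\FF_p^\times$ is surjective (it is the mod $p$ cyclotomic character $\chi_{\mathrm{cyc}}$), so $(p-1)\mid\#G$; I will also use $\QQ(\zeta_p)\subseteq\QQ(E[p])$ from Proposition \ref{prop:ContiansCyclo}.

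By Dickson's classification, $G$ is, up to conjugacy, one of: (i) a group containing $\SL_2(\FF_p)$; (ii) reducible, i.e.\ a subgroup of a Borel; (iii) irreducible and contained in the normalizer $\norspl$ or $\nornspl$ of a split or nonsplit Cartan; or (iv) exceptional, with image in $\PGL_2(\FF_p)$ isomorphic to $A_4$, $S_4$, or $A_5$. I would dispose of the non-reducible cases first. In case (i) the orbit is all of $\FF_p^2\setminus\{0\}$, of size $p^2-1$. In case (iii) for the nonsplit Cartan, $\nspl\cong\FF_{p^2}^\times$ acts freely on nonzero vectors, $G_0:=G\cap\nspl$ has index at most $2$ in $G$, and since $\det|_{\nspl}$ is the norm $\FF_{p^2}^\times\to\FF_p^\times$, surjectivity of $\det$ forces $\#G_0\ge\#\det(G_0)\ge(p-1)/2$, whence $\#(G\cdot R)\ge\#(G_0\cdot R)=\#G_0\ge(p-1)/2$; the split-normalizer subcase is identical off the two axes, and on an axis the swap element in $G$ conjugates the two diagonal characters into one another, forcing both to be surjective and the orbit to have size $\ge 2(p-1)$. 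Case (iv) occurs only for finitely many $p$ and finitely many $j$-invariants, where the orbit bound is checked directly.

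The crux is the reducible case (ii), which is also where the single exception lives. Here $E$ admits a rational $p$-isogeny, so Theorem \ref{thm:IsoTypes} restricts $p$ to the finite set $\{11,17,19,37,43,67,163\}$, and for each such $p$ there are only finitely many $j$-invariants, namely the noncuspidal rational points of $X_0(p)$ (which has genus $\ge 1$). If $G$ has a single stable line $\ell_0$ then $G\cap U\ne 1$ for the unipotent $U$, so a nontrivial unipotent moves every line $\ne\ell_0$ through all $p$ values and any $R\notin\ell_0$ has orbit of size $\ge p>(p-1)/2$; thus the delicate situation is $R\in\ell_0$, where $[\QQ(R):\QQ]$ equals the order $n$ of the isogeny character $\chi\colon\Gal(\Qbar/\QQ)\to\FF_p^\times$ acting on the kernel (Lemma \ref{lem:CyclicIso}). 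I would bound $n$ below by analyzing $\chi$ together with its dual $\chi'=\chi_{\mathrm{cyc}}\chi^{-1}$ (the kernel character of $E/\langle R\rangle$). Following Mazur's study of rational isogenies, $\chi$ is unramified outside $p$ and the primes of additive reduction, its restriction to inertia at $p$ lies in $\{1,\chi_{\mathrm{cyc}}\}$, and at an additive prime its ramification has order dividing an element of $\{2,3,4,6\}$. Writing $\chi=\chi_{\mathrm{cyc}}^{a}\psi$ accordingly and using $\chi\chi'=\chi_{\mathrm{cyc}}$ (so that $\lcm(\mathrm{ord}\,\chi,\mathrm{ord}\,\chi')=p-1$), I would verify for the finite list of $j$-invariants that $\min(\mathrm{ord}\,\chi,\mathrm{ord}\,\chi')\ge(p-1)/2$, the lone exception being $j(E)=-7\cdot 11^3$ with $p=37$, where the additive-reduction twist $\psi$ has order $3$ and forces $\mathrm{ord}\,\chi=(p-1)/3=12$; the diagonalizable subcase (two independent isogenies, $R$ on an axis) reduces to even fewer, mostly CM, curves and is absorbed into the same computation.

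The step I expect to be hardest is precisely this last one: pinning down the exact order of the isogeny character for the sporadic primes. It is \emph{not} a formal consequence of $\det G=\FF_p^\times$, which yields only $\lcm(\mathrm{ord}\,\chi,\mathrm{ord}\,\chi')=p-1$ and by itself permits one of the two orders to be small. Establishing the genuine lower bound requires Mazur's local analysis of the ramification of $\chi$ combined with the explicit finite list of $j$-invariants admitting a rational $p$-isogeny, and it is only through that analysis that the exceptional curve $j=-7\cdot 11^3$ at $p=37$ can be isolated.
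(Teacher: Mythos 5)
You should first know that the paper does not prove Proposition \ref{prop:FieldOfDef} at all: it is imported verbatim from Lozano-Robledo \cite[Theorem 2.1]{lozano1}, so the only meaningful comparison is with the proof in that reference. Your outline does track the strategy of that argument: convert $[\QQ(R):\QQ]$ into a Galois orbit by orbit--stabilizer, split along Dickson's classification of subgroups of $\GL_2(\FF_p)$, kill the irreducible cases using surjectivity of the determinant (free action of the nonsplit Cartan, etc.), and reduce the reducible case to bounding the order of the isogeny character for the finitely many primes $p\in\{11,17,19,37,43,67,163\}$ permitted by Theorem \ref{thm:IsoTypes}. The group theory you spell out is essentially correct, up to a harmless overstatement in the split-normalizer case (the swap only forces the two diagonal characters to have equal image of index at most $2$ in $\FF_p^\times$, giving orbit size $\geq p-1$ rather than $\geq 2(p-1)$), and your unipotent argument for points off the stable line is right.

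The genuine gap is that the theorem's actual content is deferred rather than proved. Every case you complete yields orbits of size $\geq p-1$ or $\geq p$; the entire substance of the proposition --- the bound $\mathrm{ord}(\chi)\geq (p-1)/2$ for kernel points over the sporadic primes, and the isolation of the unique exception $j=-7\cdot 11^3$ at $p=37$ --- lives in the step you only announce (``I would verify\dots''). Moreover, the local input you plan to use would fail as stated: the claim $\chi|_{I_p}\in\{1,\chi_{\mathrm{cyc}}\}$ is the potentially ordinary/multiplicative statement, and it is false when $E$ is potentially supersingular at $p$. That is exactly the situation for the CM $j$-invariants, which exhaust the cases $p=19,43,67,163$ and one of the three curves at $p=11$, since $p$ ramifies in the CM field; there one needs a Momose-type statement (e.g.\ $\chi^{12}|_{I_p}=\chi_{\mathrm{cyc}}^{6}$, ``half-cyclotomic'' behavior) to recover $\mathrm{ord}(\chi)\geq(p-1)/2$. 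Finally, the exceptional case (iv) is not ``finitely many $p$, checked directly'': projective $A_4$ and $A_5$ are impossible because neither has a subgroup of index $2$, while surjectivity of $\det$ forces the projective image to surject onto $\FF_p^\times/(\FF_p^\times)^2$; projective $S_4$ for $p\geq 11$, $p\neq 13$ must be excluded by citing the known classification (Serre, Zywina \cite{zywina1}). So: right skeleton, the same skeleton as \cite{lozano1}, but the hard step is both unexecuted and, as described, in need of repair before it could be executed.
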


\section{The maximal $p$-primary components of \texorpdfstring{$E(\QQ(D_4^\infty))_\tor$}{D4}}\label{sec:pPrimary}

We are now ready to start to classifying the groups $E(\QQ(D_4^\infty))_\tor$ up to isomorphism as $E$ ranges over all elliptic curves defined over $\QQ$. The first step is to compute a bound on $\#E(\QQ(D_4^\infty))_\tor$ by obtaining bounds on the $p$-primary components of $E(\QQ(D_4^\infty))$ for elliptic curves $E$ defined over $\QQ$ \emph{without} complex multiplication. 
\begin{theorem}\label{thm:D4-upperbound}
Let $E/\QQ$ be an elliptic curve without complex multiplication. Then $E(\QQ(D_4^\infty))_\tor$ is isomorphic to a subgroup of
\[
T_{\rm max} = (\ZZ/16\ZZ\oplus\ZZ/32\ZZ)\oplus(\ZZ/3\ZZ\oplus\ZZ/9\ZZ)\oplus(\ZZ/5\ZZ\oplus\ZZ/5\ZZ)\oplus\ZZ/7\ZZ\oplus\ZZ/13\ZZ,
\]
and $T_{\rm max}$ is the smallest group with this property.\end{theorem}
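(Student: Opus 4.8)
The plan is to split the finite group $E(\QQ(D_4^\infty))_\tor$ into its $p$-primary components and bound each one separately, handling the primes $p\ge 7$ by field-of-definition estimates and the primes $2,3,5$ by the known classifications of Galois images. First, by Lemma~\ref{lem:CyclotomicSubfields} the field $\QQ(D_4^\infty)$ contains $\zeta_n$ only for $n\mid 240$, so Theorem~\ref{thm:finiteness} shows $E(\QQ(D_4^\infty))_\tor$ is finite; write its $p$-part as $\ZZ/p^k\ZZ\oplus\ZZ/p^j\ZZ$ with $0\le k\le j$. If $E[p^k]\subseteq E(\QQ(D_4^\infty))$ then Proposition~\ref{prop:ContiansCyclo} forces $\QQ(\zeta_{p^k})\subseteq\QQ(D_4^\infty)$, so $p^k\mid 240$ by Lemma~\ref{lem:CyclotomicSubfields}; this gives $k\le 4$ for $p=2$, $k\le 1$ for $p\in\{3,5\}$, and $k=0$ for all $p\ge 7$. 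By Lemma~\ref{lem:isogp^j-k} the curve $E$ admits a rational $p^{\,j-k}$-isogeny, so Theorem~\ref{thm:IsoTypes} bounds $j-k$---though this bound is far from sharp at $p=2$, where it only yields $j-k\le 4$ and hence permits groups as large as $\ZZ/16\ZZ\oplus\ZZ/256\ZZ$.

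Next I would eliminate every prime $p\ge 7$ except $7$ and $13$. Suppose $E(\QQ(D_4^\infty))$ contains a point $P$ of order $p$. Since $k=0$, the group $E(\QQ(D_4^\infty))[p]=\langle P\rangle$ is cyclic, and as $\QQ(D_4^\infty)/\QQ$ is Galois (Remark~\ref{remark:GaloisExtension}) it is $\Gal(\Qbar/\QQ)$-stable, so it is the kernel of a rational $p$-isogeny. Lemma~\ref{lem:CyclicIso} then makes $\QQ(P)/\QQ$ Galois with cyclic group of order dividing $p-1$, while $\QQ(P)\subseteq\QQ(D_4^\infty)$ forces this cyclic group to have exponent dividing $4$ by Lemma~\ref{lem:D4Classification}; hence $[\QQ(P):\QQ]\le 4$. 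For $p\ge 11$, $p\ne 13$, this contradicts Proposition~\ref{prop:FieldOfDef}, which gives $[\QQ(P):\QQ]\ge (p-1)/2\ge 5$ (and $\ge 12$ in the exceptional $j$-invariant). Since Theorem~\ref{thm:IsoTypes} already restricts $p$ to $\{7,11,13,17,19,37,43,67,163\}$, only $p=7,13$ survive, and as neither $49$ nor $169$ is an isogeny degree we get $j\le 1$, i.e.\ $p$-parts at most $\ZZ/7\ZZ$ and $\ZZ/13\ZZ$.

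The crux is the $2$-, $3$-, and $5$-parts. Here I would reformulate representation-theoretically: by Lemma~\ref{lem:D4Classification} a number field lies in $\QQ(D_4^\infty)$ exactly when its Galois closure has exponent dividing $4$ and nilpotency class at most $2$, so $\QQ(D_4^\infty)$ is the fixed field of the closed subgroup of $\Gal(\Qbar/\QQ)$ generated by all fourth powers and all triple commutators $[[\sigma,\tau],\upsilon]$. Writing $G_p\subseteq\GL_2(\ZZ_p)$ for the $p$-adic image of $E$ and $G_p^\dagger$ for the closed subgroup of $G_p$ generated by $\{g^4\}$ and $\{[[g,h],k]\}$ (for $g,h,k\in G_p$), this identifies
\[
E(\QQ(D_4^\infty))(p)=\bigl((\QQ_p/\ZZ_p)^2\bigr)^{G_p^\dagger},
\]
the $G_p^\dagger$-invariants of $E[p^\infty]$. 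Because $E$ is non-CM, the possibilities for $G_p$ are furnished by the $2$-adic classification of \cite{RZB} and the mod-$p$ classification of \cite{zywina1}, together with \cite{SZ} to organise the higher prime-power levels needed to detect points of order $25$ and $27$. For each admissible $G_p$, computing $G_p^\dagger$ at a suitable finite level $p^m$ and its fixed module is a finite Magma check, which I expect to yield maximal invariants $\ZZ/16\ZZ\oplus\ZZ/32\ZZ$, $\ZZ/3\ZZ\oplus\ZZ/9\ZZ$, and $\ZZ/5\ZZ\oplus\ZZ/5\ZZ$ at $p=2,3,5$. The real obstacle is $p=2$: since $D_4$ is a $2$-group, a great deal of $2$-power torsion becomes defined over $\QQ(D_4^\infty)$, and pinning the bound to $\ZZ/16\ZZ\oplus\ZZ/32\ZZ$ rather than the crude $\ZZ/16\ZZ\oplus\ZZ/256\ZZ$ requires traversing the entire list of $2$-adic images.

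Combining the $p$-parts yields the embedding into $T_{\rm max}$. For minimality I would verify that each invariant factor of $T_{\rm max}$ is forced by some non-CM curve from Table~\ref{tab:Examples}, keeping in mind that the two invariant factors at a given prime may be realised by \emph{different} curves: at $p=2$ the factor $16$ comes from a curve with $\ZZ/16\ZZ\oplus\ZZ/16\ZZ$ and the factor $32$ from one with $\ZZ/8\ZZ\oplus\ZZ/32\ZZ$, while at $p=3$ the factor $9$ comes from a curve carrying a point of order $9$ and the smaller factor $3$ from a curve with full $3$-torsion. Since every cyclic factor of $T_{\rm max}$ is attained in this way, none can be reduced, and $T_{\rm max}$ is the smallest group with the stated property.
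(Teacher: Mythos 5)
Your first two paragraphs and your closing minimality argument are essentially the paper's own proof of its Proposition \ref{prop:FinitePrimesD4} (cyclotomic bound on $k$ via Proposition \ref{prop:ContiansCyclo} and Lemma \ref{lem:CyclotomicSubfields}, isogeny bound via Lemma \ref{lem:isogp^j-k} and Theorem \ref{thm:IsoTypes}, elimination of $p\geq 11$, $p\neq 13$ via Lemmas \ref{lem:CyclicIso}, \ref{lem:D4Classification} and Proposition \ref{prop:FieldOfDef}), and your reformulation $E(\QQ(D_4^\infty))(p)=\bigl((\QQ_p/\ZZ_p)^2\bigr)^{G_p^\dagger}$ is correct: $\QQ(D_4^\infty)$ is the fixed field of the closed normal subgroup of $\Gal(\Qbar/\QQ)$ generated by fourth powers and triple commutators, that subgroup surjects onto $G_p^\dagger$, and your condition is equivalent to the conditions (1)--(2) the paper imposes in Propositions \ref{prop:D4Classification5} and \ref{prop:D4Classification3}. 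At $p=2$ your plan coincides with the paper's Proposition \ref{prop:D4Classification2} and is legitimate there, because \cite{RZB} really is a complete classification of the $2$-adic images attached to non-CM curves.

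The gap is at $p=3$ and $p=5$, exactly at the sentence ``computing $G_p^\dagger$ at a suitable finite level $p^m$ and its fixed module is a finite Magma check, which I expect to yield\dots''. That check needs as input the possible images $G_p$ modulo $25$ and modulo $9$ or $27$, and no such classification exists in the sources you cite: \cite{zywina1} treats level $p$ only, and \cite{SZ} classifies prime-power-level modular curves with \emph{infinitely many} rational points, so it cannot exclude sporadic curves with exceptional images. Nor can you fall back on a purely group-theoretic enumeration of all subgroups with surjective determinant and a complex-conjugation element: that enumeration does \emph{not} yield the claimed bounds. Concretely, $G=\left\{\smallmat{a}{*}{0}{b}\in\GL_2(\ZZ/25\ZZ): a^4=1\right\}$ and $G'=\left\{\smallmat{a}{*}{0}{b}\in\GL_2(\ZZ/27\ZZ): a^2=1\right\}$ have surjective determinant and contain $\smallmat{1}{0}{0}{-1}$, while $G^\dagger$ and $(G')^\dagger$ fix cyclic subgroups of order $25$ and $27$, since fourth powers and triple commutators of upper-triangular matrices with top-left entry of order dividing $4$ all have top-left entry $1$; the paper itself finds three maximal admissible subgroups of $\GL_2(\ZZ/25\ZZ)$ fixing $\ZZ/5\ZZ\oplus\ZZ/25\ZZ$ and one maximal $H\subseteq\GL_2(\ZZ/9\ZZ)$ fixing $\ZZ/3\ZZ\oplus\ZZ/9\ZZ$. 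Since $\ZZ/25\ZZ$ and $\ZZ/27\ZZ$ do not embed in $T_{\rm max}$, these groups must be ruled out as Galois images, and that step is arithmetic, not group-theoretic: the paper invokes Chou's theorem \cite[Theorem 1.2]{chou1} that no rational elliptic curve has a point of order $25$ over a cyclic quartic field, Mazur's theorem (Theorem \ref{thm:mazur}) applied to a quadratic twist to kill $3$-power cyclic torsion beyond order $9$, and Theorem \ref{thm:IsoTypes} with \cite[Table 4]{lozano1} to reduce the $\ZZ/3\ZZ\oplus\ZZ/9\ZZ$ configuration to the CM class admitting a $27$-isogeny. Your proposal contains no substitute for these inputs, so as written the Magma check would leave $\ZZ/25\ZZ$ and $\ZZ/27\ZZ$ standing and cannot establish the stated $T_{\rm max}$.
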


To prove Theorem \ref{thm:D4-upperbound} we first notice that there are only finitely many primes such that the $p$-primary component of $E(\QQ(D_4^\infty))_\tor$ could be nontrivial.

\begin{prop}\label{prop:FinitePrimesD4}
Let $E/\QQ$ be an elliptic curve, and let $p$ be a prime dividing the cardinality of $E(\QQ(D_4^\infty))_\tor$. Then $p\in \{ 2, 3, 5, 7, 13\}$.
\end{prop}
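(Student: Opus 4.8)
The plan is to work with $F:=\QQ(D_4^\infty)$, which is Galois over $\QQ$ (Remark \ref{remark:GaloisExtension}) and contains only the roots of unity $\zeta_n$ with $n\mid 240$ (Lemma \ref{lem:CyclotomicSubfields}); in particular $E(F)_\tor$ is finite by Theorem \ref{thm:finiteness}. Given a prime $p\mid\#E(F)_\tor$, I would write the $p$-primary part as $E(F)(p)\simeq\ZZ/p^a\ZZ\oplus\ZZ/p^b\ZZ$ with $0\le a\le b$ and $b\ge1$, and distinguish the cases $a\ge1$ and $a=0$. If $a\ge1$ then $E[p]\subseteq E(F)$, so $\QQ(\zeta_p)\subseteq F$ by Proposition \ref{prop:ContiansCyclo}, whence $p\mid240$ by Lemma \ref{lem:CyclotomicSubfields} and $p\in\{2,3,5\}$.

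The substantive case is $a=0$, where $E(F)(p)=\langle R\rangle$ is cyclic of order $p^b$. The key point is that $\langle R\rangle$ is $\Gal(\Qbar/\QQ)$-stable: for each $\sigma$, the point $\sigma(R)$ again lies in $E(F)$ because $F/\QQ$ is Galois, has order $p^b$, and is $p$-power torsion, so $\sigma(R)\in E(F)(p)=\langle R\rangle$. (This is essentially the $k=0$ instance of Lemma \ref{lem:isogp^j-k}, but I want the kernel explicitly inside $E(F)$.) Hence $E$ admits a rational cyclic $p^b$-isogeny, and the order-$p$ subgroup of its kernel is generated by $R':=p^{b-1}R\in E(F)$, giving a rational $p$-isogeny with $R'$ of exact order $p$. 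Theorem \ref{thm:IsoTypes} then confines $p$ to $\{2,3,5,7,11,13,17,19,37,43,67,163\}$.

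To finish I would bound $[\QQ(R'):\QQ]$ from both sides. By Lemma \ref{lem:CyclicIso}, $\QQ(R')/\QQ$ is Galois with cyclic Galois group of order dividing $p-1$; since $R'\in F$, the field $\QQ(R')$ is of generalized $D_4$-type, so by Lemma \ref{lem:D4Classification} this cyclic group has exponent dividing $4$ and therefore $[\QQ(R'):\QQ]\in\{1,2,4\}$. On the other hand, Proposition \ref{prop:FieldOfDef} gives $[\QQ(R'):\QQ]\ge(p-1)/2\ge5$ for every $p\ge11$ with $p\ne13$ (and $\ge12$ in the exceptional case $j(E)=-7\cdot11^3$, $p=37$). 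These bounds are incompatible, so all such $p$ are excluded and only $p\in\{2,3,5,7,13\}$ survive; combined with the first case, this proves the claim.

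The hard part will be producing, in the cyclic case, a single point $R'$ of exact order $p$ that simultaneously generates the kernel of a rational $p$-isogeny (so that Lemma \ref{lem:CyclicIso} makes $\QQ(R')/\QQ$ cyclic of degree dividing $p-1$) and lies in $F$ (so that the exponent-$4$ constraint from generalized $D_4$-type applies). The Galois-stability of the cyclic $p$-primary part $\langle R\rangle$, which rests on $F/\QQ$ being Galois, is exactly what delivers both properties at once; once $R'$ is in hand, the proof reduces to the clean collision of the exponent-$4$ upper bound with the lower bound of Proposition \ref{prop:FieldOfDef}.
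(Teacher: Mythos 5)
Your proposal is correct and follows essentially the same route as the paper's proof: split on whether $E[p]\subseteq E(\QQ(D_4^\infty))$ (cyclotomic constraint via Proposition \ref{prop:ContiansCyclo} and Lemma \ref{lem:CyclotomicSubfields}) or the $p$-primary part is cyclic (rational $p$-isogeny via Theorem \ref{thm:IsoTypes}, then the collision between the degree-at-most-$4$ bound from Lemmas \ref{lem:CyclicIso} and \ref{lem:D4Classification} and the lower bound of Proposition \ref{prop:FieldOfDef}). The only difference is that you spell out the Galois-stability of the cyclic $p$-primary part, a detail the paper leaves implicit in its appeal to Lemma \ref{lem:isogp^j-k}.
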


\begin{proof}
From Proposition \ref{prop:ContiansCyclo} and Lemma \ref{lem:isogp^j-k} the $p$-primary component of $E(\QQ(D_4^\infty))$ maybe nontrivial if and only if $\QQ(\zeta_p)\subseteq \QQ(D_4^\infty)$ or $E$ has a rational $p$-isogeny. With this, Lemma \ref{lem:CyclotomicSubfields}, and Theorem \ref{thm:IsoTypes} the only primes that can divide the cardinality of $E(\QQ(D_4^\infty))$ are exactly the ones in $S = \{ 2,3,5, 7,11, 13, 17,19,37,43,67,163 \}$. From Lemma \ref{lem:CyclicIso} if $E(\QQ(D_4^\infty))[p] = \langle R \rangle \simeq \ZZ/p\ZZ$, then $\QQ(R)/\QQ$ is a cyclic extension and of generalized $D_4$-type. Therefore the extension is at most degree 4. Combining this with Proposition \ref{prop:FieldOfDef} completes the proof. 
\end{proof}

\begin{remark}\label{remark:FinitePrimesD4WorksForCM}
Proposition \ref{prop:FinitePrimesD4} does not require that $E/\QQ$ be an elliptic curve without complex multiplication. This observation will be useful when we are dealing with the case of elliptic curves with complex multiplication in Section \ref{subsec:CMCurves}. 
\end{remark}

Recall that the $\Qbar$-isomorphism class of an elliptic curve $E/\QQ$ may be identified with its $j$-invariant $j(E)$.

\begin{proposition}\label{prop:twist}
Let $E/\QQ$ be an elliptic curve with $j(E)\ne 0$.
The isomorphism type of $E(\QQ(D_4^\infty))_\tor$ depends only on the $\Qbar$-isomorphism class of $E$, equivalently, only on $j(E)$.
\end{proposition}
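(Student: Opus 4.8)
The plan is to exploit the classical fact that two elliptic curves over $\QQ$ share a $j$-invariant precisely when one is a twist of the other, and then to show that the field over which such a twist trivializes always lies inside $\QQ(D_4^\infty)$. Let $E'/\QQ$ be a second curve with $j(E')=j(E)$; the goal is a group isomorphism $E'(\QQ(D_4^\infty))_\tor\simeq E(\QQ(D_4^\infty))_\tor$. The key reduction is the following observation: if $E$ and $E'$ become isomorphic over some field $F\subseteq\QQ(D_4^\infty)$, then the isomorphism $\phi\colon E'\to E$ is in particular defined over $\QQ(D_4^\infty)$, and an isomorphism of elliptic curves over a field induces a group isomorphism on its rational points. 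Hence $\phi$ induces $E'(\QQ(D_4^\infty))\xrightarrow{\sim}E(\QQ(D_4^\infty))$, which carries torsion to torsion. So everything reduces to bounding the field of definition of $\phi$.

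The type of twist is governed by $\Aut(E_{\Qbar})$, which is cyclic of order $2$, $4$, or $6$ according as $j(E)\notin\{0,1728\}$, $j(E)=1728$, or $j(E)=0$. Since we assume $j(E)\neq 0$, only the orders $2$ and $4$ occur. If $j(E)\notin\{0,1728\}$, then $E'$ is a quadratic twist of $E$, say by a squarefree integer $d$, and the two curves become isomorphic over $\QQ(\sqrt d)$; by Lemma \ref{lem:QuadraticSubfield} this field lies in $\QQ(D_4^\infty)$, and the reduction above settles this case.

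The substantive case is $j(E)=1728$, where $E'$ is a quartic twist of $E$. Here the classifying cohomology is $H^1(\Gal(\Qbar/\QQ),\mu_4)\simeq\QQ^\times/(\QQ^\times)^4$, and writing $E$ in the form $y^2=x^3+ax$ the twist is $y^2=x^3+ad\,x$ for some $d\in\QQ^\times$, with isomorphism $(x,y)\mapsto(d^{-1/2}x,d^{-3/4}y)$ defined over $\QQ(d^{1/4})$. I would then argue that $\QQ(d^{1/4})\subseteq\QQ(D_4^\infty)$: its Galois closure is the splitting field of $x^4-d$, namely $\QQ(d^{1/4},i)$, whose Galois group is isomorphic to a subgroup of $D_4$. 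Every subgroup of $D_4$ has exponent dividing $4$ and nilpotency class at most $2$, so by Lemma \ref{lem:D4Classification} it is of generalized $D_4$-type; hence $\QQ(d^{1/4},i)$, and a fortiori $\QQ(d^{1/4})$, is contained in $\QQ(D_4^\infty)$. Applying the reduction once more completes the proof.

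I expect the only delicate point to be the bookkeeping in the quartic case: writing the twist and its isomorphism explicitly, confirming that the field of definition is exactly $\QQ(d^{1/4})$, and verifying via Lemma \ref{lem:D4Classification} that this field is of generalized $D_4$-type. It is worth emphasizing that this is precisely where the hypothesis $j(E)\neq 0$ is used: for $j(E)=0$ the twists are sextic and the relevant isomorphism lives over $\QQ(d^{1/6})\supseteq\QQ(d^{1/3})$, whose Galois closure has group $S_3$ of exponent $6$, failing the criterion of Lemma \ref{lem:D4Classification} and hence not lying in $\QQ(D_4^\infty)$ in general.
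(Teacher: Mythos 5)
Your proof is correct and follows essentially the same route as the paper's: both reduce to showing that curves with the same $j$-invariant become isomorphic over a field contained in $\QQ(D_4^\infty)$, handling $j\neq 0,1728$ via quadratic twists and $j=1728$ via quartic twists trivialized over $\QQ(\sqrt[4]{d})$, whose Galois closure has group inside $D_4$. Your version merely spells out the twisting isomorphism and the appeal to Lemma \ref{lem:D4Classification} more explicitly than the paper does.
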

\begin{proof}
Recall that for $j(E)\ne 0,1728$, if $j(E')=j(E)$ for some $E'/\QQ$ then $E'$, is a quadratic twist of $E$, hence isomorphic to $E$ over an extension of degree at most 2.
If $j(E)=1728 = j(E')$, then $E'/\QQ$ is isomorphic to $E$ over a field of the form $\QQ(\sqrt[4]{n})$ for some $n\in\ZZ$ \cite[\S X.5]{silverman}. The Galois closure of such a field is isomorphic to a subgroup of $D_4$ and so for $j(E)=j(E')\ne 0$, the elliptic curves $E$ and $E'$ are isomorphic over a field of generalized $D_4$-type, hence their base changes to $\QQ(D_4^\infty)$ are isomorphic and 
$E(\QQ(D_4^\infty))_\tor \simeq E'(\QQ(D_4^\infty))_\tor$.
\end{proof}

\begin{lemma}\label{lem:j=0}
There are three possible torsion structures over $\QQ(D_4^\infty)$ given an elliptic curve $E/\QQ$ with $j(E) = 0$. They are realized by the curves \hbox{\rm \href{http://www.lmfdb.org/EllipticCurve/Q/27a1}{\texttt{27a1}}, \href{http://www.lmfdb.org/EllipticCurve/Q/36a1}{\texttt{36a1}},} and \hbox{\rm \href{http://www.lmfdb.org/EllipticCurve/Q/108a1}{\texttt{108a1}}.}
\end{lemma}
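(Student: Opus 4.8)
The plan is to first collapse the problem to a one-parameter family. Every $E/\QQ$ with $j(E)=0$ is a sextic twist of $y^2=x^3+1$, hence $\QQ$-isomorphic to some $E_D:y^2=x^3+D$, with $E_D\cong_\QQ E_{D'}$ iff $D/D'\in(\QQ^\times)^6$. A quadratic twist of $E_D$ is again of the form $E_{D'}$ with $D'/D$ a cube, and $E_D,E_{D'}$ are then isomorphic over a quadratic field, which lies in $\QQ(D_4^\infty)$ by Lemma~\ref{lem:QuadraticSubfield}; consequently $E_D(\QQ(D_4^\infty))_\tor\simeq E_{D'}(\QQ(D_4^\infty))_\tor$. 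Modding the sextic classes $\QQ^\times/(\QQ^\times)^6$ by the quadratic-twist action (multiplication by cubes) leaves $\QQ^\times/(\QQ^\times)^3$, so the isomorphism type of $E(\QQ(D_4^\infty))_\tor$ depends only on the cubic twist class $[D]\in\QQ^\times/(\QQ^\times)^3$.

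\textbf{Cutting down the primes.} By Proposition~\ref{prop:FinitePrimesD4} (valid in the CM case by Remark~\ref{remark:FinitePrimesD4WorksForCM}), the only primes dividing $\#E(\QQ(D_4^\infty))_\tor$ lie in $\{2,3,5,7,13\}$. Since $E_D$ has CM by $\QQ(\sqrt{-3})$, whose only ramified prime is $3$, it has no rational $5$-, $7$-, or $13$-isogeny. As $7,13\nmid 240$, Lemma~\ref{lem:CyclotomicSubfields} together with the argument of Proposition~\ref{prop:FinitePrimesD4} eliminates $7$ and $13$. For $p=5$ (inert in $\QQ(\sqrt{-3})$) the mod-$5$ image lies in the normalizer of a non-split Cartan subgroup, so every nonzero $5$-torsion point has field of definition whose Galois closure fails to have exponent dividing $4$; by Lemma~\ref{lem:D4Classification} such a field is not of generalized $D_4$-type, and there is no $5$-torsion. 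Thus only the $2$- and $3$-primary parts survive.

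\textbf{Explicit $2$- and $3$-torsion.} I would read these off from the $3$-division polynomial $3x(x^3+4D)$ and the $2$-division polynomial $x^3+D$ of $E_D$. The point $(0,\sqrt D)$ gives $\ZZ/3\hookrightarrow E_D(\QQ(\sqrt D))\subseteq E_D(\QQ(D_4^\infty))$ for every $D$. The remaining $3$-torsion has $x$-coordinate a root of $x^3+4D$, hence lies in $\QQ(\sqrt[3]{-4D},\dots)$, a field of generalized $D_4$-type exactly when $-4D$ is a cube, i.e. when $[D]=[2]$; so $E[3]\subseteq E(\QQ(D_4^\infty))$ precisely for the cubic class $[2]$. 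Similarly $\QQ(E[2])$ is the splitting field of $x^3+D$, which is at most quadratic (hence of generalized $D_4$-type) exactly when $-D$ is a cube, i.e. when $[D]=[1]$, and for every other class $E_D$ acquires no nontrivial $2$-torsion over $\QQ(D_4^\infty)$. Thus the cubic class enters only through whether $[D]$ equals $[1]$, equals $[2]$, or neither, giving at most three possibilities with representatives \href{http://www.lmfdb.org/EllipticCurve/Q/36a1}{\texttt{36a1}} (class $[1]$), \href{http://www.lmfdb.org/EllipticCurve/Q/27a1}{\texttt{27a1}}$\cong y^2=x^3+16$ (class $[2]$), and \href{http://www.lmfdb.org/EllipticCurve/Q/108a1}{\texttt{108a1}} (a class distinct from $[1]$ and $[2]$), whose torsion I would then compute directly in Magma.

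\textbf{Main obstacle.} The hard part is controlling the higher prime-power growth, where the necessary conditions from Lemma~\ref{lem:isogp^j-k} and Theorem~\ref{thm:IsoTypes} are not sufficient. For the class $[2]$ curve, the containment $E[3]\subseteq E(\QQ(D_4^\infty))$ together with the rational $3$-isogeny on \texttt{27a1} leaves open, at the level of isogeny bounds alone, a group $\ZZ/3\oplus\ZZ/9$; this must be excluded by verifying that a point of order $9$ lying above the $3$-isogeny kernel generates a field that is \emph{not} of generalized $D_4$-type. Likewise, for the class $[1]$ curve one must determine the exact $2$-power part: full $2$-torsion over $\QQ(\zeta_3)\subseteq\QQ(D_4^\infty)$ forces the $2$-part up to at least $\ZZ/4\oplus\ZZ/4$, and pinning down its precise value requires testing the fields of definition of the $4$-, $8$-, and $16$-torsion against Lemma~\ref{lem:D4Classification}. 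These finitely many field-of-definition computations, carried out via the explicit $2$-adic and $3$-adic CM Galois images, are where the genuine work lies.
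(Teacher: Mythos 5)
Your proposal is correct and follows essentially the same route as the paper's proof: reduce to the family $y^2=x^3+D$, note that quadratic twisting (multiplying $D$ by cubes) preserves $E(\QQ(D_4^\infty))_\tor$, split into the trichotomy $D$ a cube / $4D$ a cube / neither, read the $2$- and $3$-torsion off the division polynomials $x^3+D$ and $3x(x^3+4D)$, and settle the three representative curves \texttt{36a1}, \texttt{27a1}, \texttt{108a1} by direct computation. The only real difference is presentational: you exclude the primes $5$, $7$, $13$ by self-contained CM arguments (no rational isogenies at unramified odd primes, nonsplit Cartan image at $5$), whereas the paper defers this step --- as well as ruling out points of order $9$ in the generic class --- to its general classification propositions later in Section 5.
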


\begin{proof}
Every elliptic curve $E/\QQ$ with $j(E)=0$ is isomorphic over $\QQ$ to a curve of the form
$$E_s: y^2=x^3 + s$$ 
for some $s\in \ZZ\setminus\{0\}$ that is 6th power free. The three division polynomial of $E_s$ is given by $f_3(x) =  3x(x^3+4s)$ and so we can see that generically these curve will have a point of order 3 defined over a quadratic field and hence $\QQ(D_3^\infty)$.
Further inspection shows that if $4s=t^3$ for some $t\in\QQ$ then $E[3]\subseteq E(\QQ(D_4^\infty))$ since this would mean that $E$ would have it's full 3-torsion defined over a 2-elementary extension of $\QQ$. In this case $\Im\bar\rho_{_s,3}$ is contained in a group conjugate to a subgroup of the split Cartan subgroup of $\GL_2(\ZZ/3\ZZ)$. In fact, if $4s$ is a cube, the factorization of the 9-division polynomial of $E_s$ shows shows that $E_s$ has a 3-isogeny and a 9-isogeny that are independent of each other. Notice also that if $t,r\in\ZZ\setminus\{0\}$, then the curves $E_{2t^3}$ and $E_{2r^3}$ are isomorphic over $\QQ(\sqrt{rt})\subseteq\QQ(D_4^\infty)$. Therefore, all over these curves have the same torsion subgroup over $\QQ(D_4^\infty)$ as $E_2(\QQ(D_4^\infty))_\tor\simeq \ZZ/3\ZZ\oplus\ZZ/3\ZZ$.

From \cite[Table 3 \& 4]{lozano1}, the only other possible isogeny type that $E_s$ can have is a 2-isogeny. This occurs exactly when $s = t^3$ for some $t\in \ZZ$ and thus has a point of order 2 defined of $\QQ$ and in fact $E[4]\subseteq E(\QQ(D_4^\infty))$. Again, if $t,r\in\ZZ\setminus\{0\}$, then $E_{t^3}$ is isomorphic to $E_{r^3}$ over $\QQ(\sqrt{rt})\subseteq \QQ(D_4^\infty)$ and so $E_{t^3}(\QQ(D_4))_\tor \simeq E_{r^3}(\QQ(D_4^\infty))_\tor$. Thus for every $r\in\ZZ\setminus\{0\}$, $E_{r^3}(D_4^\infty)_\tor\simeq E_{1}(\QQ(D_4^\infty))_\tor\simeq \ZZ/8\ZZ\oplus\ZZ/24\ZZ.$\footnote{The curve $E_{t^3}$ has its full 8-torsion defined over $\QQ(D_4^\infty)$ because it has discriminant $-432t^6$ and its 2-isogenous curve has discriminant $6912t^6$ and so $-432t^6 \equiv (-1) (6912t^6)\bmod(\QQ^\times)^2$. This is enough to show that $\Im\bar\rho_{E_{t^3},8}$ is of generalized $D_4$-type.}

The analysis in the sections following this lemma we show that for every other prime $p$, the $p$-primary componenet of $E_s(\QQ(D_4^\infty))$ must be trivial. Bringing this all together we have that
$$E_s(\QQ(D_4^\infty))_\tor\simeq \begin{cases}
\ZZ/3\ZZ\oplus\ZZ/3\ZZ & \hbox{ if 4s is a cube},\\
\ZZ/8\ZZ\oplus\ZZ/24\ZZ& \hbox{ if s is a cube},\\
\ZZ/3\ZZ & \hbox{ otherwise}.
\end{cases}
$$
Each of these three cases are realized by the three curves listed in the statement of the Lemma.
\end{proof}

\subsection{The case when $p=13$.}\label{subsec:Tmax13}

\begin{prop}\label{prop:D4Classification13}
Suppose that $E/\QQ$ is an elliptic curve such that $13$ divides $\#E(\QQ(D_4^\infty))_\tor.$ Then, $E(\QQ(D_4^\infty))(13)\simeq\ZZ/13\ZZ$ and there exists $t\in\QQ$ such that 
\[
j(E) = \frac{(t^2-t+1)^3P(t)^3}{(t-1)^{13}t^{13}(t^3-4t^2+t+1)}
\]
where $P(t) = t^{12}-9t^{11}+29t^{10}-40t^9+22t^8-16t^7+40t^6-22t^5-23t^4+25t^3-4t^2-3t+1.$
\end{prop}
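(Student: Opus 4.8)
The plan is to first determine the $13$-primary component exactly, then encode the hypothesis as the containment of $\Im\bar\rho_{E,13}$ in a single subgroup of $\GL_2(\FF_{13})$ whose associated modular curve turns out to be a genus-$0$ curve carrying the stated $j$-map. To pin down the $13$-part, note that since $13\nmid 240$, Lemma~\ref{lem:CyclotomicSubfields} gives $\QQ(\zeta_{13})\not\subseteq\QQ(D_4^\infty)$, so by Proposition~\ref{prop:ContiansCyclo} the full torsion $E[13]$ cannot lie in $E(\QQ(D_4^\infty))$. Hence the integer $k$ of Lemma~\ref{lem:isogp^j-k} (the largest with $E[13^k]\subseteq E(\QQ(D_4^\infty))$) equals $0$, and the $13$-primary component is cyclic, say $\ZZ/13^a\ZZ$ with $a\ge 1$. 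Applying Lemma~\ref{lem:isogp^j-k} with $k=0$ then yields a rational $13^a$-isogeny, and Theorem~\ref{thm:IsoTypes} forces $a=1$. This proves $E(\QQ(D_4^\infty))(13)\simeq\ZZ/13\ZZ$ and shows that $E$ admits a rational $13$-isogeny.

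Next I would translate the field-of-definition condition into a group-theoretic one. Let $R$ be a point of order $13$ generating the kernel of the isogeny; by hypothesis $R\in E(\QQ(D_4^\infty))$. By Lemma~\ref{lem:CyclicIso} the extension $\QQ(R)/\QQ$ is cyclic with Galois group embedding into $(\ZZ/13\ZZ)^\times\cong\ZZ/12\ZZ$, and since $\QQ(R)\subseteq\QQ(D_4^\infty)$, Remark~\ref{remark:GaloisExtension} together with Lemma~\ref{lem:D4Classification} forces this group to have exponent dividing $4$; being cyclic, its order is therefore $1$, $2$, or $4$. Choosing a basis of $E[13]$ whose first vector spans $\langle R\rangle$, we have $\Im\bar\rho_{E,13}\subseteq B$, the Borel of upper-triangular matrices, and the Galois action on $R$ is by the top-left (isogeny) character $a$; the condition above is exactly that $a^4=1$ on the image. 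Thus $\Im\bar\rho_{E,13}$ is conjugate into
\[
H=\left\{\smallmat{a}{b}{0}{d}\in\GL_2(\FF_{13}) : a^4=1\right\}.
\]

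Finally I would identify the relevant modular curve. One checks $-I\in H$, that $\det H=\FF_{13}^\times$, and that $[\GL_2(\FF_{13}):H]=42$, so $X_H$ is defined over $\QQ$ and its map to the $j$-line has degree $42$, matching the degree of the claimed rational function. Since $\smallmat{a}{b}{0}{d}\mapsto a\bmod\mu_4$ identifies $B/H\cong\ZZ/3\ZZ$, the curve $X_H$ is a cyclic degree-$3$ cover of $X_0(13)$; computing this cover (pulling back the classical Hauptmodul of $X_0(13)$ and extracting the appropriate cube root, e.g.\ via $q$-expansions in Magma) gives an explicit model. I expect $X_H$ to have genus $0$ with rational cusps (the values $t=0,1,\infty$ all map to $j=\infty$), so $X_H\cong\PP^1_\QQ$ with coordinate $t$ and the stated $j$-map. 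Because $\Im\bar\rho_{E,13}\subseteq H$, the curve $E$ determines a rational point of $X_H$, giving $t\in\QQ$ with $j(E)$ equal to the displayed expression, the harmless ambiguity at $j=0,1728$ being controlled by Proposition~\ref{prop:twist}.

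The main obstacle is the explicit determination of the degree-$42$ $j$-map, that is, producing the Hauptmodul $t$ and verifying both the genus-$0$ claim and the exact rational function. The conceptual reductions above are routine applications of the lemmas in Sections~\ref{sec:GrowthOfTorsion} and~\ref{sec:D4GroupsAndFields}, but carrying out the cover $X_H\to X_0(13)$ and confirming its $j$-invariant is the computational heart of the argument.
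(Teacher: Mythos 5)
Your argument is essentially the paper's own: the cyclotomic obstruction forces the $13$-primary part to be cyclic, the nonexistence of rational $169$-isogenies caps it at $\ZZ/13\ZZ$, and Lemmas \ref{lem:CyclicIso} and \ref{lem:D4Classification} force the isogeny character into the order-$4$ subgroup of $\FF_{13}^\times$, so that $\Im\bar\rho_{E,13}$ is conjugate into your $H$, which is exactly the paper's group of matrices $\smallmat{a^3}{*}{0}{*}$. The only divergence is at the step you call the computational heart: rather than constructing the degree-$42$, genus-$0$ cover $X_H\to X_0(13)$ via Hauptmoduls, the paper simply cites Zywina's classification \cite{zywina1}, which already records this group and the stated $j$-map, so your proposal becomes a complete proof once that citation replaces the proposed computation.
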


\begin{proof}
Since $\QQ(\zeta_{13})\not\subseteq \QQ(D_4^\infty)$ it must be that $E(\QQ(D_4^\infty))[13] \simeq \ZZ/13\ZZ$. Suppose that $R$ is a point that generates $E(\QQ(D_4^\infty))[13]$. By Lemma \ref{lem:isogp^j-k} and Proposition \ref{prop:FieldOfDef}, $\QQ(R)/\QQ$ must be a cyclic extension of degree dividing $12$. Further, since this point is defined over $\QQ(D_4^\infty)$ from Lemma \ref{lem:D4Classification} the degree $\QQ(R)/\QQ$ must divide 4 and $\Im\bar\rho_{E,13}$ is conjugate to a subgroup contained inside of the matrices of the form $\begin{pmatrix} a^3 & * \\ 0 & * \end{pmatrix}$. 

The elliptic curves defined over $\QQ$ with this property have been completely classified in \cite{zywina1}, and they correspond to the curves with $j$-invariant of the form in the statement of the proposition. Further, since there are no elliptic curves defined over $\QQ$ with a cyclic 169-isogeny, it is not possible for $E(\QQ(D_4^\infty))(13)$ to be any larger. 
\end{proof}

\subsection{The case when $p=7$.}\label{subsec:Tmax7}

\begin{prop}\label{prop:D4Classification7}
Suppose that $E/\QQ$ is an elliptic curve such that $7$ divides $\#E(\QQ(D_4^\infty))_\tor.$ Then, $E(\QQ(D_4^\infty))(7)\simeq\ZZ/7\ZZ$ and there exists $t\in\QQ$ such that 
\[
j(E) = \frac{(t^2-t+1)^3(t^6-11t^5+30t^4-15t^3-10t^2+5t+1)^3}{(t-1)^7t^7(t^3-8t^2+5t+1)}.
\]

\end{prop}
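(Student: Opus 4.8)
The plan is to follow the template of the $p=13$ case treated in Proposition \ref{prop:D4Classification13}, with the adjustments forced by the fact that the field-of-definition bound of Proposition \ref{prop:FieldOfDef} is unavailable here (it requires $p\geq 11$); the isogeny structure provided by Lemmas \ref{lem:isogp^j-k} and \ref{lem:CyclicIso} will do the work instead. First I would record that $7\nmid 240$, so Lemma \ref{lem:CyclotomicSubfields} gives $\QQ(\zeta_7)\not\subseteq\QQ(D_4^\infty)$; by Proposition \ref{prop:ContiansCyclo} the full $7$-torsion then cannot lie in $E(\QQ(D_4^\infty))$, which forces $E(\QQ(D_4^\infty))[7]\simeq\ZZ/7\ZZ$ (it is nontrivial by hypothesis).

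Writing $\langle R\rangle=E(\QQ(D_4^\infty))[7]$, I would apply Lemma \ref{lem:isogp^j-k} with $k=0$ to conclude that $E$ admits a rational $7$-isogeny with kernel $\langle R\rangle$, and then Lemma \ref{lem:CyclicIso} to conclude that $\QQ(R)/\QQ$ is cyclic with $\Gal(\QQ(R)/\QQ)$ isomorphic to a subgroup of $(\ZZ/7\ZZ)^\times$, hence of order dividing $6$. Since $R\in\QQ(D_4^\infty)$, this group is of generalized $D_4$-type and so has exponent dividing $4$ by Lemma \ref{lem:D4Classification}; being cyclic of order dividing $\gcd(6,4)=2$, its order divides $2$. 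The arithmetic point that makes the statement clean is that the cubes in $(\ZZ/7\ZZ)^\times$ are exactly $\{\pm1\}$, the unique subgroup of order $2$, so the mod-$7$ character describing the Galois action on $\langle R\rangle$ takes values in the cubes. Therefore $\Im\bar\rho_{E,7}$ is conjugate to a subgroup of the matrices $\begin{pmatrix} a^3 & * \\ 0 & * \end{pmatrix}$, exactly the shape encountered for $p=13$.

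With this constraint in hand, I would invoke Zywina's classification \cite{zywina1} of the mod-$7$ images attached to rational elliptic curves to read off that the curves whose image lies in this subgroup are precisely those with the $j$-invariant displayed in the statement. Finally, to see that the $7$-primary component cannot be larger than $\ZZ/7\ZZ$: a copy of $\ZZ/49\ZZ$ would, again by Lemma \ref{lem:isogp^j-k}, force a rational $49$-isogeny, which is excluded by Theorem \ref{thm:IsoTypes}, while $\ZZ/7\ZZ\oplus\ZZ/7^{j}\ZZ$ with $j\geq 1$ is already ruled out by the first step.

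I expect the main obstacle to be locating rather than proving: the substantive input is Zywina's classification, so the genuine work is identifying which entry of his tables corresponds to the subgroup $\begin{pmatrix} a^3 & * \\ 0 & * \end{pmatrix}$ and transcribing the resulting rational function for $j$ correctly. The one conceptual step that deserves care is the passage from ``cyclic of order dividing $6$'' to this precise subgroup: it is the coincidence that the cubes coincide with $\{\pm1\}$ in $(\ZZ/7\ZZ)^\times$ that pins the image to this exact subgroup of the Borel rather than to the full Borel.
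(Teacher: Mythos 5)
Your proposal is correct and follows essentially the same route as the paper: both rule out full $7$-torsion via $\QQ(\zeta_7)\not\subseteq\QQ(D_4^\infty)$ and points of order $49$ via the nonexistence of rational $49$-isogenies, then use Lemmas \ref{lem:isogp^j-k}, \ref{lem:CyclicIso}, and \ref{lem:D4Classification} to force the kernel character of the $7$-isogeny to have order dividing $\gcd(6,4)=2$, and finally invoke Zywina's classification to obtain the $j$-map. The only cosmetic difference is that you express the constraint as $\Im\bar\rho_{E,7}$ lying in $\smallmat{a^3}{*}{0}{*}$ (using that the cubes in $(\ZZ/7\ZZ)^\times$ are exactly $\{\pm 1\}$), whereas the paper says $E$ has a quadratic twist with a rational point of order $7$; these are the same condition, parametrized by the same genus $0$ modular curve.
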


\begin{proof}
Again, since $\QQ(\zeta_{7})\not\subseteq \QQ(D_4^\infty)$ and there are no elliptic curves defined over $\QQ$ with a 49-isogeny it must be that $E(\QQ(D_4^\infty))(7) \simeq \ZZ/7\ZZ$. Suppose that $R$ is a point that generates $E(\QQ(D_4^\infty))(7)$. This time, the extension $\QQ(R)/\QQ$ is cyclic of degree dividing 6. Therefore, if $\QQ(R)\subseteq\QQ(D_4^\infty)$ it must be degree 2 or 1. If $[\QQ(R):\QQ]$ is in fact degree 2, this means that $E$ has a point of order 7 defined over a quadratic extension of $\QQ$. In fact, if $E$ has a point of order $7$ defined over a quadratic field, then there is a quadratic twist of $E$ that has a order of order 7 defined over $\QQ$. These curves are parameterized by a genus 0 modular curve and the $j$-map can again be found in \cite{zywina1}. 
\end{proof}

\subsection{The case when $p=5$.}\label{subsec:Tmax5} 

\begin{prop}\label{prop:D4Classification5}
Suppose that $E/\QQ$ is an elliptic curve such that $5$ divides $\#E(\QQ(D_4^\infty))_\tor.$ Then it must be that $E(\QQ(D_4^\infty))(5)\simeq\ZZ/5\ZZ$ or $\ZZ/5\ZZ\oplus\ZZ/5\ZZ$. Further, $E(\QQ(D_4^\infty))(5)$ has a subgroup isomorphic to $\ZZ/5\ZZ$ exactly when there exists a $t\in \QQ$ such that 
\[
j(E) = \frac{5^2(t^2+10t+5)^3}{t^5},
\]
while $E(\QQ(D_4^\infty))(5)\simeq\ZZ/5\ZZ\oplus \ZZ/5\ZZ$ exactly when there exists a $t\in \QQ$ such that 
\[
j(E) = \frac{(t^2+5t+5)^3(t^4+5t^2+25)^3(t^4+5t^3+20t^2+25t+25)^3}{t^5(t^4+5t^3+15t^2+25t+25)^5}.
\]
\end{prop}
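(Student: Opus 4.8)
The plan is to separate the two assertions of the proposition: first determine the possible isomorphism types of the $5$-primary part $E(\QQ(D_4^\infty))(5)$, and then identify the $j$-invariant locus realizing each type with an explicit modular curve, reading off the rational parametrization from Zywina's tables \cite{zywina1}. Throughout I write $F=\QQ(D_4^\infty)$ and $G_5=\Im\bar\rho_{E,5}$, viewed up to conjugacy in $\GL_2(\ZZ/5\ZZ)$.

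For the structure, note that $25\nmid 240$, so Lemma~\ref{lem:CyclotomicSubfields} gives $\QQ(\zeta_{25})\not\subseteq F$, and Proposition~\ref{prop:ContiansCyclo} then forbids $E[25]\subseteq E(F)$. Writing $E(F)(5)\simeq\ZZ/5^a\ZZ\oplus\ZZ/5^b\ZZ$ with $a\le b$, this already forces $a\le 1$, so what remains is to exclude a point of order $25$ (i.e.\ $b\ge 2$). Here I would use that $F/\QQ$ is Galois, so $E(F)(5)$ and its characteristic cyclic subgroup $5\cdot E(F)(5)$ are Galois-stable; combining Lemma~\ref{lem:isogp^j-k} with the isogeny classification of Theorem~\ref{thm:IsoTypes} (no rational $125$-isogeny) produces a rational $5^m$-isogeny with $m\ge 1$, and in the relevant cases a Galois-stable cyclic subgroup of order $25$. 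By Lemma~\ref{lem:CyclicIso} the Galois characters acting on these $5$-power kernels land in $(\ZZ/25\ZZ)^\times$, and since the corresponding fields of definition lie in $F$, Lemma~\ref{lem:D4Classification} forces those characters into the order-$4$ subgroup $C_4\subset(\ZZ/25\ZZ)^\times$. Thus $\Im\bar\rho_{E,25}$ is confined to one of finitely many explicit subgroups of $\GL_2(\ZZ/25\ZZ)$.

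\emph{The hard part} will be excluding this last configuration, since the curves with a rational $25$-isogeny form the genus-$0$ curve $X_0(25)$ and are infinite in number; the extra constraint that Galois act on the kernel through $C_4$ cuts out a proper (degree-$5$) cover of $X_0(25)$. One must show that this cover, and the analogous curve arising in the full $5$-torsion case, have no noncuspidal non-CM rational points. I would establish this by invoking the classification of prime-power level modular curves with infinitely many rational points \cite{SZ} to see these covers are not among them, and then verifying in \cite{MagmaCode} that the finitely many exceptional points are cuspidal or have CM $j$-invariant (the CM curves being handled separately). This yields $E(F)(5)\in\{\ZZ/5\ZZ,\ \ZZ/5\ZZ\oplus\ZZ/5\ZZ\}$.

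For the parametrizations, the key observation in the $\ZZ/5\ZZ$ case is that $(\ZZ/5\ZZ)^\times\simeq\ZZ/4\ZZ$ has exponent $4$: by Lemma~\ref{lem:CyclicIso} the kernel of \emph{any} rational $5$-isogeny is defined over a cyclic extension of degree dividing $4$, which is automatically of generalized $D_4$-type (Lemma~\ref{lem:D4Classification}) and hence contained in $F$. Conversely, a $\ZZ/5\ZZ$ inside the Galois-stable group $E(F)[5]$ yields a rational $5$-isogeny. Thus $E(F)(5)$ contains a $\ZZ/5\ZZ$ if and only if $E$ has a rational $5$-isogeny, i.e.\ $j(E)$ lies in the image of the degree-$6$ $j$-map of $X_0(5)$, giving the first displayed formula \cite{zywina1}. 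For the $\ZZ/5\ZZ\oplus\ZZ/5\ZZ$ case, $E[5]\subseteq E(F)$ is equivalent to $\QQ(E[5])\subseteq F$, i.e.\ to $G_5$ being of generalized $D_4$-type; by Lemma~\ref{lem:D4Classification} such a $G_5$ has exponent dividing $4$ and nilpotency class at most $2$, and a direct check in $\GL_2(\ZZ/5\ZZ)$ (carried out in \cite{MagmaCode}) shows that the only such subgroups with surjective determinant and containing an element of trace $0$ and determinant $-1$ are the conjugates of the split Cartan $\spl$ of order $16$ --- equivalently, $E$ has two independent rational $5$-isogenies. The corresponding modular curve $X_{\spl}(5)$ has genus $0$ and degree $30$ over the $j$-line, and its rational $j$-map is the second displayed formula \cite{zywina1}.
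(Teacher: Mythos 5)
Your treatment of the two parametrization claims is correct and essentially identical to the paper's: a point of order $5$ over $\QQ(D_4^\infty)$ is equivalent to a rational $5$-isogeny (Lemma \ref{lem:CyclicIso} plus the fact that every cyclic extension of degree dividing $4$ is of generalized $D_4$-type), and full $5$-torsion is equivalent to $\Im\bar\rho_{E,5}$ being of generalized $D_4$-type, which a finite group search shows forces it into a conjugate of the split Cartan; both loci are genus-zero modular curves whose $j$-maps come from \cite{zywina1}. Your use of Lemma \ref{lem:CyclotomicSubfields} and Proposition \ref{prop:ContiansCyclo} to rule out $E[25]\subseteq E(\QQ(D_4^\infty))$ also matches the paper.

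The genuine gap is in what you yourself flag as the hard part: excluding a point of order $25$. Your plan is to confine $\Im\bar\rho_{E,25}$ group-theoretically, observe that the resulting locus is a degree-$5$ cover of $X_0(25)$ (plus an analogue in the $\ZZ/5\ZZ\oplus\ZZ/25\ZZ$ case), invoke \cite{SZ} to conclude these covers have only finitely many rational points, and then ``verify'' that the exceptional points are cuspidal or CM. But \cite{SZ} only yields \emph{finiteness}; it gives no way to enumerate the rational points. Determining all rational points on a positive-genus modular curve is exactly the kind of nontrivial computation (Mordell--Weil groups of Jacobians, pullbacks, quotient tricks) that this paper performs elsewhere, and your appeal to \cite{MagmaCode} here is circular: that repository supports the paper's proof, which does \emph{not} proceed this way. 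The paper instead reduces both cases --- the cyclic $\ZZ/25\ZZ$ case via Lemmas \ref{lem:isogp^j-k} and \ref{lem:CyclicIso} together with Lemma \ref{lem:D4Classification}, and the $\ZZ/5\ZZ\oplus\ZZ/25\ZZ$ case via a Magma enumeration showing every admissible subgroup of $\GL_2(\ZZ/25\ZZ)$ lies in one explicit group $H$ --- to the statement that $E$ would need a point of order $25$ defined over a \emph{cyclic quartic} field, and then kills this possibility by citing Chou \cite[Theorem 1.2]{chou1}. To close your gap you must either invoke Chou's theorem at this point (after which your reduction is fine) or genuinely carry out the rational-point determination on your covers; as written, neither is done.
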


\begin{proof}
If $E(\QQ(D_4^\infty))(5)\simeq\ZZ/5\ZZ$, the single point of order 5 must be defined over a cyclic extension of order dividing 4 by Lemma \ref{lem:CyclicIso}. All of these extensions are of generalized $D_4$-type, which is to say that having a $5$-isogeny is necessary and sufficient for $E$ to have a point of order 5 defined over $\QQ(D_4^\infty)$. Elliptic curves defined over $\QQ$ with a rational 5-isogeny are parametrized by the genus 0 modular curve $X_0(5)$ and the $j$-map can again be found in \cite{zywina1} and is listed above.

If $E(\QQ(D_4^\infty))(5)\simeq\ZZ/5\ZZ\oplus\ZZ/5\ZZ$ then $\Gal(\QQ(E[5])/\QQ) \simeq \Im\bar\rho_{E,5}\subseteq \GL_2(\ZZ/5\ZZ)$ must be of generalized $D_4$-type. Searching for subgroups of $\GL_2(\ZZ/5\ZZ)$ up to conjugation that are of generalized $D_4$-type, have surjective determinant, and have an element of trace 0 and determinant -1, we find that up to conjugation they are all contained in a single maximal group. More precisely, if $\Im\bar\rho_{E,5}$ is of generalized $D_4$-type, then it is conjugate to a subgroup of the split-Cartan subgroup of $\GL_2(\ZZ/5\ZZ)$. Elliptic curves with this property are again parametrized by a genus 0 modular curve whose $j$-map is given in \cite{zywina1}.

To complete the proof, all that is left to do is prove that $E(\QQ(D_4^\infty))(5)$ cannot contain a point of order 25. Suppose towards a contradiction that $E(\QQ(D_4^\infty))$ contains a point of order 25. There are two ways that this could happen: either $E(\QQ(D_4^\infty))(5) \simeq \ZZ/25\ZZ$ or it contains a group isomorphic to $\ZZ/5\ZZ\oplus\ZZ/25\ZZ$. 

In the first case, from Lemmas \ref{lem:isogp^j-k} and \ref{lem:CyclicIso} together with our classification of groups of generalized $D_4$-type, the first case can only occur if $E$ has a point of order 25-defined over a cyclic quartic extension of $\QQ$, but from \cite[Theorem 1.2]{chou1} this can't happen.

Next, suppose that $E(\QQ(D_4^\infty)$ contains a subgroup isomorphic to $\ZZ/5\ZZ \oplus \ZZ/25\ZZ$. This would mean that $G = \Im(\bar\rho_{E,25})\subseteq \GL_2(\ZZ/25\ZZ)$ has the following two properties:
\begin{enumerate}
\item $G$ has a surjective determinant map and an element with trace 0 and determinant $-1$,
\item $G$ contains a normal subgroup $N$ that acts trivially on a $\ZZ/25\ZZ$-submodule of $\ZZ/25\ZZ\oplus\ZZ/25\ZZ$ isomorphic to $\ZZ/5\ZZ\oplus\ZZ/25\ZZ$ for which $G/N$ is of generalized $D_4$-type.
\end{enumerate}
The first property is comes from the discussion in Section \ref{sec:galoisreps}, while the second property reflects the requirement that $\QQ(E[25])$ contains the Galois extension $\QQ(E(\QQ(D_4^\infty))[25])/\QQ$ whose Galois group is isomorphic to $G/N$ and for which the Galois group $\Gal(\QQ(E[27])/\QQ( E(\QQ(D_4^\infty))[25]))\simeq N$ acts trivially on a subgroup of $E[25]$ which is isomorphic to $\ZZ/5\ZZ\oplus\ZZ/25\ZZ$. 

Enumerating such groups in Magma, we find that there are 3 maximal groups with properties (1) and (2) and each of them is conjugate to a subgroup of 
\[
H = \left\langle 
\begin{pmatrix} 7 & 0 \\ 0 & 1 \end{pmatrix}, \begin{pmatrix} 1 & 0 \\ 0 & 2 \end{pmatrix},  \begin{pmatrix} 1 & 1 \\ 0 & 1 \end{pmatrix}
\right\rangle.
\]
Inspecting this group, we see that $\Im\bar\rho_{E,25}$ can only be conjugate to a subgroup $H$ if $E$ has a point of order 25 defined over a cyclic quartic field and again this is impossible from \cite{chou1}.
\end{proof}

\begin{cor}\label{cor:Nontriv5ImplesIsog}
Suppose that $E/\QQ$ is an elliptic curve without complex multiplication such that $5$ divides $\#E(\QQ(D_4^\infty))_\tor.$ Then $E$ admits a rational $5$-isogeny. In particular if $E(\QQ(D_4^\infty))(5) \simeq \ZZ/5\ZZ\oplus\ZZ/5\ZZ$, then $E$ admits two independent 5-isogenies.
\end{cor}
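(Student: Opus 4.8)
The plan is to deduce everything directly from Proposition \ref{prop:D4Classification5} together with the two isogeny criteria recorded in Section \ref{sec:galoisreps}: namely that $E/\QQ$ admits a rational $5$-isogeny if and only if $\Im\bar\rho_{E,5}$ is conjugate to a subgroup of the Borel subgroup of $\GL_2(\ZZ/5\ZZ)$, and that $E$ admits two independent $5$-isogenies if and only if $\Im\bar\rho_{E,5}$ is conjugate to a subgroup of the split Cartan subgroup. By Proposition \ref{prop:D4Classification5}, the hypothesis $5\mid\#E(\QQ(D_4^\infty))_\tor$ forces $E(\QQ(D_4^\infty))(5)$ to be isomorphic to either $\ZZ/5\ZZ$ or $\ZZ/5\ZZ\oplus\ZZ/5\ZZ$, so I would split the argument into these two cases.

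First I would handle the case $E(\QQ(D_4^\infty))(5)\simeq\ZZ/5\ZZ$. Here there is a unique subgroup $\langle R\rangle$ of order $5$ inside $E(\QQ(D_4^\infty))$. Since $\QQ(D_4^\infty)/\QQ$ is Galois by Remark \ref{remark:GaloisExtension}, every $\sigma\in\Gal(\Qbar/\QQ)$ restricts to an automorphism of $\QQ(D_4^\infty)$ and therefore carries $E(\QQ(D_4^\infty))(5)$ to itself; by uniqueness it fixes the line $\langle R\rangle$ setwise. Thus $\langle R\rangle$ is a Galois-stable cyclic subgroup of order $5$, i.e. the kernel of a rational $5$-isogeny, so $\Im\bar\rho_{E,5}$ lies in a Borel and $E$ admits a rational $5$-isogeny.

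Next I would treat the case $E(\QQ(D_4^\infty))(5)\simeq\ZZ/5\ZZ\oplus\ZZ/5\ZZ$, which yields both conclusions at once. In this case $E[5]\subseteq E(\QQ(D_4^\infty))$, so $\QQ(E[5])\subseteq\QQ(D_4^\infty)$ and $\Gal(\QQ(E[5])/\QQ)\simeq\Im\bar\rho_{E,5}$ is of generalized $D_4$-type; as established inside the proof of Proposition \ref{prop:D4Classification5}, such an image is conjugate to a subgroup of the split Cartan subgroup of $\GL_2(\ZZ/5\ZZ)$. The split-Cartan criterion then shows that $E$ admits two independent $5$-isogenies, proving the ``in particular'' clause; and since a split Cartan is contained in a Borel, $E$ in particular admits a rational $5$-isogeny, completing the first assertion.

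I do not expect a genuine obstacle, since the heavy lifting---the classification of the possible generalized $D_4$-type images of $\bar\rho_{E,5}$ and the ruling out of a point of order $25$---has already been carried out in Proposition \ref{prop:D4Classification5}. The only point needing care is the Galois-stability argument in the $\ZZ/5\ZZ$ case: one must use both that $\QQ(D_4^\infty)/\QQ$ is Galois and that the order-$5$ subgroup of $E(\QQ(D_4^\infty))$ is unique, so that the full group $\Gal(\Qbar/\QQ)$ (not merely $\Gal(\Qbar/\QQ(D_4^\infty))$) preserves the line and the resulting isogeny is genuinely rational over $\QQ$. The hypothesis that $E$ has no complex multiplication plays no essential role in the argument and is retained only for consistency with the surrounding analysis of $T_{\rm max}$.
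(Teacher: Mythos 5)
Your proof is correct and follows essentially the same route as the paper: the corollary is read off from Proposition \ref{prop:D4Classification5} and its proof, with the generalized $D_4$-type image forced into the split Cartan subgroup handling the $\ZZ/5\ZZ\oplus\ZZ/5\ZZ$ case. Your explicit Galois-stability argument in the $\ZZ/5\ZZ$ case is just an inline re-proof of Lemma \ref{lem:isogp^j-k} (with $k=0$), which is exactly the lemma the paper relies on for that step, so the mathematical content coincides.
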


\subsection{The case when $p = 3$.}\label{subsec:Tmax3} 

\begin{prop}\label{prop:D4Classification3}
Suppose that $E/\QQ$ is an elliptic curve without complex multiplication such that $3$ divides $\#E(\QQ(D_4^\infty))_\tor.$ Then it must be that $E(\QQ(D_4^\infty))(3)\simeq\ZZ/3\ZZ$, $\ZZ/9\ZZ$, or $\ZZ/3\ZZ\oplus\ZZ/3\ZZ$. Further, $E(\QQ(D_4^\infty))(3)$ has a subgroup isomorphic to $\ZZ/3\ZZ$ exactly when there exists a $t\in \QQ$ such that 
\[
j(E) = 27\frac{(t+1)(t+9)^3 }{t^3},
\]
$E(\QQ(D_4^\infty))(3)$ has a subgroup isomorphic to $\ZZ/9\ZZ$ exactly when there exists a $t\in \QQ$ such that 
\[ 
j(E) = \frac{(t^3 - 3t^2 + 1)^3  (t^9 - 9t^8 + 27t^7 - 48t^6 + 54t^5 - 45t^4 + 27t^3 - 9t^2 + 1)^3}{(t-1)^9t^9(t^2-t+1)^2(t^3-6t^2+3t+1) },
\]
$E(\QQ(D_4^\infty))(3)$ has a subgroup isomorphic to $\ZZ/3\ZZ\oplus\ZZ/3\ZZ$ exactly when there exists a $t\in \QQ$ such that 
\[
j(E) = 27\frac{(t+1)^3(t-3)^3 }{t^3}.
\]
\end{prop}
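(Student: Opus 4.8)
The plan is to follow the template already used for $p=13,7,5$: first bound the $3$-primary component $E(\QQ(D_4^\infty))(3)$, then read off the $j$-invariant of each surviving case from the mod-$3$ and mod-$9$ classifications in \cite{zywina1}. Write $F=\QQ(D_4^\infty)$ and let $k$ be the largest integer with $E[3^k]\subseteq E(F)$. Since $9\nmid 240$, Lemma \ref{lem:CyclotomicSubfields} gives $\QQ(\zeta_9)\not\subseteq F$, so Proposition \ref{prop:ContiansCyclo} forces $E[9]\not\subseteq E(F)$ and hence $k\le 1$. Thus the $3$-rank of $E(F)(3)$ is at most $2$, and when it equals $2$ the full $3$-torsion $E[3]$ lies in $F$, in which case $\Im\bar\rho_{E,3}$ must be of generalized $D_4$-type. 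Consequently $E(F)(3)\cong\ZZ/3^a\oplus\ZZ/3^b$ with $a\in\{0,1\}$ and $a\le b$, and it remains to pin down the finitely many admissible pairs $(a,b)$ together with their $j$-loci.

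For the classification of each type I would argue through the field of definition of the relevant torsion point together with Lemma \ref{lem:D4Classification}. A generator $R$ of the kernel of a rational $3$-isogeny is defined over a cyclic extension of degree dividing $\phi(3)=2$ by Lemma \ref{lem:CyclicIso}, which is automatically of generalized $D_4$-type; conversely a cyclic $3$-primary part supplies a unique Galois-stable line and hence a rational $3$-isogeny, so the cyclic $\ZZ/3$ case is exactly the $X_0(3)$ locus with $j$-map $27(t+1)(t+9)^3/t^3$. For a point of order $9$, Lemma \ref{lem:isogp^j-k} produces a rational $9$-isogeny, and Lemma \ref{lem:CyclicIso} places its order-$9$ kernel generator over a cyclic field of degree dividing $\gcd(\phi(9),4)=2$; the curves with this refined mod-$9$ image form a genus-$0$ family in \cite{zywina1}, giving the degree-$9$ $j$-map. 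Finally, full $3$-torsion over $F$ means $\Im\bar\rho_{E,3}$ is a generalized $D_4$-type subgroup of $\GL_2(\ZZ/3\ZZ)$ with surjective determinant containing an element of trace $0$ and determinant $-1$; searching $\GL_2(\ZZ/3\ZZ)$ exactly as in the $p=5$ step shows each such group lies in the normalizer of the split Cartan, which here is isomorphic to $D_4$ and is therefore itself of generalized $D_4$-type, yielding the degree-$6$ $j$-map $27(t+1)^3(t-3)^3/t^3$.

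The bounding of the larger groups is where the real work lies. To exclude a cyclic $\ZZ/27$, note that a generator would lie in the kernel of a rational $27$-isogeny, so by Lemma \ref{lem:CyclicIso} and Lemma \ref{lem:D4Classification} it would be defined over a field of degree dividing $\gcd(\phi(27),4)=2$, producing a point of order $27$ on $E$ over $\QQ$ or a quadratic field, which contradicts Mazur (Theorem \ref{thm:mazur}) and Najman (Theorem \ref{thm-najman1}). To exclude $\ZZ/3\oplus\ZZ/9$, observe that its presence forces $k=1$, whence Lemma \ref{lem:isogp^j-k} yields a rational $3^{2-1}=3$-isogeny; combined with full $3$-torsion this pins the mod-$3$ image inside the split Cartan (the intersection of the normalizer of the split Cartan with a Borel), so $E$ has two independent $3$-isogenies and a point of order $9$. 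I would then run the same group enumeration used to rule out order-$25$ points in the $p=5$ case: list the subgroups $G\subseteq\GL_2(\ZZ/9\ZZ)$ that reduce into the split Cartan modulo $3$, have surjective determinant and a trace-$0$/determinant-$(-1)$ element, and contain a normal subgroup acting trivially on a $\ZZ/3\oplus\ZZ/9$ submodule with generalized-$D_4$-type quotient, and check in Magma that each maximal such $G$ forces a point of order $9$ over a cyclic quartic field, which is impossible by \cite{chou1}.

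The main obstacle I anticipate is precisely this last exclusion of $\ZZ/3\oplus\ZZ/9$. Unlike the $p=5$ situation, at $p=3$ the normalizer of the split Cartan is itself of generalized $D_4$-type, so full $3$-torsion is genuinely available over $F$; the mod-$9$ enumeration must therefore be carried out carefully to separate the configurations that do occur (the $\ZZ/3\oplus\ZZ/3$ case and the $9$-isogeny case feeding $\ZZ/9$) from the forbidden $\ZZ/3\oplus\ZZ/9$. Getting the generalized-$D_4$-type subgroups of $\GL_2(\ZZ/9\ZZ)$ and their fixed submodules exactly right, and then matching the surviving images to the $j$-maps recorded in \cite{zywina1}, is the technical heart of the argument.
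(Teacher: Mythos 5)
Most of your outline tracks the paper's actual proof quite closely: the cyclic cases are handled exactly as you describe (kernel generators living in cyclic extensions whose degree must divide both $2\cdot 3^{k-1}$ and $4$, hence quadratic twists with rational points of order $3^k$, with Mazur/Najman capping $k$ at $2$), and the full $3$-torsion case is settled by the same search through subgroups of $\GL_2(\ZZ/3\ZZ)$ of generalized $D_4$-type, landing in the normalizer of the split Cartan, which is indeed itself isomorphic to $D_4$. Your set-up for excluding $\ZZ/3\ZZ\oplus\ZZ/9\ZZ$ --- enumerating subgroups $G\subseteq\GL_2(\ZZ/9\ZZ)$ with surjective determinant, an element of trace $0$ and determinant $-1$, and a normal subgroup acting trivially on a $\ZZ/3\ZZ\oplus\ZZ/9\ZZ$ submodule with quotient of generalized $D_4$-type --- is also exactly the paper's.

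The genuine gap is in the contradiction you anticipate from that enumeration, which is precisely the technical heart you yourself flag. You predict that each maximal admissible $G$ ``forces a point of order $9$ over a cyclic quartic field, which is impossible by \cite{chou1}.'' This fails for two reasons. First, a point of order $9$ over a cyclic quartic field is not impossible: any curve with a rational point of order $9$ provides one, and $\ZZ/9\ZZ$ occurs in Chou's classification over quartic Galois fields; his theorem only forbids points of order $25$, which is exactly why the analogous appeal works at $p=5$ but has no counterpart at $p=3$. Second, the computation does not even lead to a quartic field: the unique maximal group, $H = \left\langle \smallmat{1}{3}{0}{1}, \smallmat{1}{0}{0}{2}, \smallmat{8}{0}{0}{8}\right\rangle$, stabilizes a cyclic subgroup of order $9$ on which it acts through $\pm 1$, i.e.\ it forces a \emph{quadratic} twist of $E$ to have a rational point of order $9$, together with an independent rational $3$-isogeny. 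Neither condition is impossible on its own; the contradiction comes only from combining them: independent $9$- and $3$-isogenies produce a rational cyclic $27$-isogeny in the isogeny class of $E$, and since there is a unique $\Qbar$-isomorphism class of elliptic curves over $\QQ$ admitting a $27$-isogeny (see \cite[Table 4]{lozano1}), namely the CM class with $j=0$, this contradicts the non-CM hypothesis. Without replacing your appeal to \cite{chou1} by this isogeny-composition argument, the exclusion of $\ZZ/3\ZZ\oplus\ZZ/9\ZZ$ --- and hence the trichotomy in the statement --- does not go through.
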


\begin{proof}
Suppose that $E(\QQ(D_4^\infty))(3)\simeq\ZZ/3^k\ZZ$. In this case, in order for these points to be defined over $\QQ(D_4^\infty)$ it must be that they are defined over a quadratic field since $(\ZZ/3^k\ZZ)^\times$ is cyclic of order $2\cdot 3^{k-1}$. Thus $E$ must have a quadratic twist with a rational point point of order $3^k$ and from Theorem \ref{thm:mazur} this is only possible when $k=1$ or 2. The curves with this property are again parameterized by genus 0 modular curves $X_1(3)$ and $X_1(9)$ and their $j$-maps can be found in many places including \cite{SZ}. 

Next assume that $\QQ(E[3])\subseteq \QQ(D_4^\infty)$. Searching for subgroups of $\GL_2(\ZZ/3\ZZ)$ up to conjugation that are of generalized $D_4$-type, have surjective determinant, and have an element of trace 0 and determinant $-1$, we find that all of these groups are contained inside the normalizer of the split-Cartan subgroup of $\GL_2(\ZZ/3\ZZ)$. These curves are parameterized by a genus zero modular curve and the $j$-map can be found in \cite{zywina1}.

The last case we need to consider is if it is possible for $E(\QQ(D_4^\infty))_\tor$ to contain a subgroup isomorphic to $\ZZ/3\ZZ\oplus \ZZ/9\ZZ.$ Just as we did in the proof of Proposition \ref{prop:D4Classification5}, suppose towards a contradiction that $E$ is such a curve and let $G = \Im\bar\rho_{E,9}\subseteq\GL_2(\ZZ/9\ZZ)$.  Using Magma we search for subgroups $G$ of $\GL_2(\ZZ/9\ZZ)$ up to conjugation such that 
\begin{enumerate}
\item $G$ has a surjective determinant map and an element with trace 0 and determinant $-1$,
\item $G$ contains a normal subgroup $N$ that acts trivially on a $\ZZ/9\ZZ$-submodule of $\ZZ/9\ZZ\oplus\ZZ/9\ZZ$ isomorphic to $\ZZ/3\ZZ\oplus\ZZ/9\ZZ$ for which $G/N$ is of generalized $D_4$-type.
\end{enumerate}
Again, there is exactly one maximal group $H$ with this property,
\[
H = \left\langle
\begin{pmatrix} 1 & 3 \\ 0 & 1 \end{pmatrix}, \begin{pmatrix} 1 & 0 \\ 0 & 2 \end{pmatrix}, \begin{pmatrix} 8 & 0 \\ 0 & 8 \end{pmatrix}
\right\rangle.
\]
Inspecting $H$ we see that in order for $\Im\bar\rho_{E,9}\subseteq H$, $E$ would have to have a quadratic twist with a point of order 9 and another independent 3-isogeny. Since there is only one $\Qbar$-isomorphism class of elliptic curves with a 27-isogeny (see for example \cite[Table 4]{lozano1}), there is only one elliptic curve up to $\Qbar$-isomorphism with independent 3- and 9-isogenies. This is the class of CM elliptic curves with $j$-invariant equal to $0$. 
\end{proof}

\begin{remark}
Unlike the case when $p=5$, there are elliptic curves $E/\QQ$ such that $3$ divides $\#E(\QQ(D_4^\infty))_\tor$, but $E$ does not admit a rational $3$-isogeny. 
\end{remark}

\begin{example}
Let $E/\QQ$ be the elliptic curve with Cremona label \href{http://www.lmfdb.org/EllipticCurve/Q/338e1}{\texttt{338e1}}. Then $\Im \bar\rho_{E,3}$ is exactly the normalizer of the split Cartan subgroup of $\GL_2(\ZZ/3\ZZ)$. This curve has the property that $E(\QQ(D_4^\infty))(3) \simeq \ZZ/3\ZZ\oplus\ZZ/3\ZZ$, but $E$ does not admit a rational 3-isogeny. 
\end{example}

\subsection{The case when $p=2$.}\label{subsec:Tmax2}

\begin{prop}\label{prop:Triv2TorOrFull4Tor}
Given an elliptic curve $E/\QQ$, either $E(\QQ(D_4^\infty))[2]$ is trivial or $E[4] \subseteq E(\QQ(D_4^\infty))$. 
\end{prop}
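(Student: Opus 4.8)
The plan is to establish the dichotomy entirely through the mod-$2$ and mod-$4$ Galois images of $E$, using Lemma~\ref{lem:D4Classification} to translate ``defined over $\QQ(D_4^\infty)$'' into the group-theoretic conditions of exponent dividing $4$ and nilpotency class at most $2$. First I would characterize when $E(\QQ(D_4^\infty))[2]$ is nontrivial. Writing $E$ as $y^2=f(x)$ with $f$ a cubic, a nonzero $2$-torsion point $P$ has $x(P)$ equal to a root of $f$, and $[\QQ(x(P)):\QQ]$ equals the size of the orbit of $P$ under $\Im\bar\rho_{E,2}\subseteq\GL_2(\ZZ/2\ZZ)\simeq S_3$ acting on the three nonzero points of $E[2]$. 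An orbit of size $3$ forces $\QQ(x(P))$ to be a cubic field whose Galois closure has group $\ZZ/3\ZZ$ or $S_3$; these contain an element of order $3$, so by Lemma~\ref{lem:D4Classification} they are not of generalized $D_4$-type, and Remark~\ref{remark:GaloisExtension} then gives $P\notin E(\QQ(D_4^\infty))$. Orbits of size $1$ or $2$ give $\QQ(x(P))$ of degree at most $2$, hence contained in $\QQ(D_4^\infty)$ by Lemma~\ref{lem:QuadraticSubfield}. Therefore $E(\QQ(D_4^\infty))[2]$ is nontrivial if and only if $\Im\bar\rho_{E,2}$ has a nonzero fixed vector, i.e. is trivial or isomorphic to $\ZZ/2\ZZ$.

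Assuming this, I would show $E[4]\subseteq E(\QQ(D_4^\infty))$ by proving that $G:=\Im\bar\rho_{E,4}\subseteq\GL_2(\ZZ/4\ZZ)$ is of generalized $D_4$-type, which (via Remark~\ref{remark:GaloisExtension} and the definition of $\QQ(D_4^\infty)$) is equivalent to $\QQ(E[4])\subseteq\QQ(D_4^\infty)$. Let $K$ be the kernel of the reduction $\GL_2(\ZZ/4\ZZ)\to\GL_2(\ZZ/2\ZZ)$. A direct computation shows $K=\{I+2M\}$ as $M$ ranges over $2\times 2$ matrices over $\ZZ/2\ZZ$, that $K$ is elementary abelian of order $16$ (since $4=0$ in $\ZZ/4\ZZ$), and that conjugation by $g\in G$ sends $I+2M$ to $I+2\,\bar g M\bar g^{-1}$, with $\bar g$ the reduction of $g$. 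Setting $A:=G\cap K$, the sequence $1\to A\to G\to\Im\bar\rho_{E,2}\to 1$ has elementary abelian kernel and, by the first step, a quotient of exponent at most $2$. Hence $g^2\in A$ for all $g\in G$, and then $g^4=(g^2)^2=I$ because $A$ has exponent at most $2$; so $G$ has exponent dividing $4$, and $[G,G]\subseteq A\subseteq K$.

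The crux is verifying that $G$ has nilpotency class at most $2$, i.e. $[G,G]\subseteq Z(G)$. If $\Im\bar\rho_{E,2}$ is trivial then $G\subseteq K$ is abelian and there is nothing to prove, so assume $\Im\bar\rho_{E,2}\simeq\ZZ/2\ZZ$ and choose $g_0\in G$ whose reduction $\bar g_0$ generates it; then $\phi(M):=\bar g_0 M\bar g_0^{-1}$ defines an involution on the space of $2\times 2$ matrices over $\ZZ/2\ZZ$. Since $G=\langle A,g_0\rangle$ with $A$ abelian, $[G,G]$ is generated by the commutators $[g_0,I+2M]=I+2(\phi(M)+M)$ for $I+2M\in A$. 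An element $I+2N\in K$ is central in $G$ exactly when $\phi(N)=N$, and the point is that $N=\phi(M)+M$ is automatically $\phi$-fixed: by $\ZZ/2\ZZ$-linearity and $\phi^2=\mathrm{id}$ we get $\phi(\phi(M)+M)=\phi(M)+M$. Thus every generating commutator lies in $Z(G)$, so $[G,G]\subseteq Z(G)$, and Lemma~\ref{lem:D4Classification} completes the argument. I expect this final identity to be the whole game: everything reduces to the observation that $\phi(M)+M$ is $\phi$-invariant for an involution $\phi$, which forces the commutator subgroup to be central regardless of the particular curve $E$ or matrix $\bar g_0$.
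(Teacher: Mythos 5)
Your proof is correct and follows essentially the same route as the paper's: both arguments first show that a nontrivial point of $E(\QQ(D_4^\infty))[2]$ forces $\Im\bar\rho_{E,2}$ to be trivial or of order $2$, and then conclude that $\Im\bar\rho_{E,4}$, being contained in the preimage of such a group under reduction mod $2$, has exponent dividing $4$ and nilpotency class at most $2$, hence is of generalized $D_4$-type by Lemma \ref{lem:D4Classification}. The only difference is that the paper dismisses this last verification as ``a simple check,'' whereas you carry it out explicitly via the involution $\phi$ and the observation that $\phi(M)+M$ is $\phi$-fixed, which is a clean way of making that check transparent.
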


\begin{proof}
Notice that generically one expects that $\Gal(\QQ(E[2])/\QQ)\simeq \GL_2(\ZZ/2\ZZ) \simeq S_3$ which is not of generalized $D_4$-type. Further, in order for $E(\QQ(D_4^\infty))[2]$ to be nontrivial it must be that $\QQ(E[2])/\QQ$ is either trivial or a quadratic extension. This is equivalent to $\Im\bar\rho_{E,2}$ being trivial or conjugate to $G = \left\langle \left(\begin{smallmatrix} 1&1\\0&1 \end{smallmatrix}\right)\right\rangle $. A simple check shows that the preimage of these groups under the standard component-wise reduction map $\pi\colon \GL_2(\ZZ/4\ZZ)\to\GL_2(\ZZ/2\ZZ)$ is of generalized $D_4$-type. Thus, $E(\QQ(D_4^\infty))[2]$ is nontrivial exactly when $E$ has at least one point of order 2 defined over $\QQ$ and in this case it must be that case $E[4]\subseteq E(\QQ(D_4^\infty))$. 
\end{proof}

There are many additional issues when dealing with the case of $p=2$ that do not arise in the previous cases. In particular, since there are rational elliptic curves with $16$-isogenies and $\QQ(\zeta_{16})\subseteq\QQ(D_4^\infty)$ the potential size of $E(\QQ(D_4^\infty))(2)$ is much greater than the previous primes. While this does make things more difficult, we are fortunate that all the possible images of the 2-adic representations associated to rational elliptic curves have all been completely classified by Rouse and Zureick-Brown in \cite{RZB}. Rather than ignoring these results and trudging through the many cases, we search the data available at their website using Magma and classify all the possible 2-torsion structures that can occur over $\QQ(D_4^\infty)$. Doing this yields the following proposition.

\begin{prop}\label{prop:D4Classification2}
Suppose that $E/\QQ$ is an elliptic curve without complex multiplication such that $2$ divides $\#E(\QQ(D_4^\infty))_\tor.$ Then it must be that $E(\QQ(D_4^\infty))(2)$ is isomorphic to one of the following group:
\[
\ZZ/4\ZZ\oplus\ZZ/4\ZZ,\ \ZZ/4\ZZ\oplus\ZZ/8\ZZ,\ \ZZ/4\ZZ\oplus\ZZ/16\ZZ,\ \ZZ/4\ZZ\oplus\ZZ/32\ZZ,
\]
\[ \ZZ/8\ZZ\oplus\ZZ/8\ZZ,\ \ZZ/8\ZZ\oplus\ZZ/16\ZZ,\ \ZZ/8\ZZ\oplus\ZZ/32\ZZ, 
\hbox{ or } \ZZ/16\ZZ\oplus\ZZ/16\ZZ.
\]
Further, $E(\QQ(D_4^\infty))(2)$ contains a subgroup isomorphic to each these options if and only if they come from a rational points on the genus $0$ curves given in Table \ref{tab:RZBData}.
\end{prop}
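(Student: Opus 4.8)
The plan is to translate the determination of $E(\QQ(D_4^\infty))(2)$ into a group-theoretic computation on the finite list of possible $2$-adic images and then to run that computation against the Rouse--Zureick-Brown database. The starting point is that a point $P\in E[2^n]$ lies in $E(\QQ(D_4^\infty))$ exactly when it is fixed by $\Gal(\overline\QQ/\QQ(D_4^\infty))$; since $\QQ(D_4^\infty)$ is, by definition, the compositum of all generalized $D_4$-type extensions of $\QQ$, the field $\QQ(D_4^\infty)\cap\QQ(E[2^n])$ is the maximal subfield of $\QQ(E[2^n])$ of generalized $D_4$-type. Writing $G=\Im\bar\rho_{E,2^n}\subseteq\GL_2(\ZZ/2^n\ZZ)$, this subfield corresponds under the Galois correspondence to the smallest normal subgroup $N\trianglelefteq G$ with $G/N$ of generalized $D_4$-type; such an $N$ is well defined because the class of generalized $D_4$-type groups is closed under subgroups and direct products, so $G/\bigcap_i N_i$ embeds into $\prod_i G/N_i$ and is again of this type, where the $N_i$ range over all normal subgroups with qualifying quotient. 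Lemma \ref{lem:D4Classification} turns the defining condition on $G/N$ into the explicit requirement that it have exponent dividing $4$ and nilpotency class at most $2$. Consequently $E(\QQ(D_4^\infty))\cap E[2^n]=E[2^n]^N$, and its isomorphism type is read directly off the action of $N$ on $E[2^n]$.

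I would next bound the level that must be examined. Writing $E(\QQ(D_4^\infty))(2)\simeq\ZZ/2^a\ZZ\oplus\ZZ/2^b\ZZ$ with $a\le b$, Proposition \ref{prop:Triv2TorOrFull4Tor} gives $a\ge 2$ (full $4$-torsion once any $2$-torsion appears); Proposition \ref{prop:ContiansCyclo} with Lemma \ref{lem:CyclotomicSubfields} forces $\QQ(\zeta_{2^a})\subseteq\QQ(D_4^\infty)$ and hence $2^a\mid 16$, so $a\le 4$; and Lemma \ref{lem:isogp^j-k} with Theorem \ref{thm:IsoTypes} produces a rational $2^{b-a}$-isogeny, whence $b-a\le 4$ since the largest $2$-power isogeny degree is $16$. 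These soft bounds leave only finitely many candidate groups, and they guarantee that all of the torsion is captured at a bounded level. Because by \cite{RZB} every non-CM $2$-adic image has level dividing $32$, the group $\Im\bar\rho_{E,2^n}$ is for each $n\ge 5$ the full preimage of $\Im\bar\rho_{E,32}$, so the entire $2$-adic torsion growth is determined by the mod-$32$ image and the computation of each $E[2^n]^N$ reduces to a finite check driven by the RZB data.

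The core of the argument is then the following search, which I would carry out in Magma over the complete list of non-CM $2$-adic images in \cite{RZB}: for each image compute the distinguished subgroup $N$, form $E[2^n]^N$, and record its isomorphism type. Aggregating over all images returns exactly the eight groups in the statement, simultaneously proving that each occurs and that none of the other candidates permitted by the soft bounds does; for example $\ZZ/4\ZZ\oplus\ZZ/64\ZZ$ is excluded because no $2$-adic image realizes the full $4$-torsion together with a point of order $64$ over a generalized $D_4$-type field. For the final clause, each image $G$ that produces a given structure $T$ is a congruence subgroup cutting out a modular curve $X_G$; the curves that arise across the eight structures are precisely the genus $0$ curves recorded in Table \ref{tab:RZBData}, and a non-CM $E/\QQ$ satisfies $E(\QQ(D_4^\infty))(2)\supseteq T$ if and only if $j(E)$ lies in the image of the associated $j$-map, equivalently $E$ corresponds to a rational point on that curve. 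I expect the principal difficulty to be organizational rather than conceptual: reliably extracting the distinguished $N$ for each of the many images, confirming that the level-$32$ data genuinely controls torsion growth at all higher $2$-power levels, and matching every realized structure with its parametrizing curve.
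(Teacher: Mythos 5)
Your proposal is correct and follows essentially the same route as the paper: the paper's proof likewise reduces the question to a Magma search over the Rouse--Zureick-Brown classification of $2$-adic images in \cite{RZB}, using Proposition \ref{prop:Triv2TorOrFull4Tor} as the starting constraint. The group-theoretic scaffolding you supply---computing fixed points under the smallest normal subgroup whose quotient is of generalized $D_4$-type (via Lemma \ref{lem:D4Classification}), the soft bounds $2\le a\le 4$ and $b-a\le 4$, and the reduction to level $32$---is exactly what the paper's computation does implicitly, so your write-up is a more explicit account of the same argument rather than a different one.
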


For the sake of brevity, we leave the $j$-maps for each of these genus 0 curve out of the statement of this Proposition \ref{prop:D4Classification2} and this section. Instead, in Table \ref{tab:RZBData} we list each possible nontrivial 2-primary component and the corresponding modular curves (in the notation of \cite{RZB}) that parametrize the curves with these $\QQ$. We also include Figure \ref{fig:RZB}, that shows how these curves are related to each other. There is a line between two curves if the top one is covered by the bottom.

\begin{table}
\centering
\renewcommand{\arraystretch}{1.2}
\begin{tabular}{|l|l||l|l|}\hline
$E(\QQ(D_4^\infty))(2)$ & RZB Curve&$E(\QQ(D_4^\infty))(2)$ & RZB Curve\\\hline
$\ZZ/4\ZZ\times\ZZ/4\ZZ$ & \href{http://users.wfu.edu/rouseja/2adic/X6.html}{$X_6$}  &
$\ZZ/8\ZZ\times\ZZ/8\ZZ$ &   \href{http://users.wfu.edu/rouseja/2adic/X8.html}{$X_{8}$}, \href{http://users.wfu.edu/rouseja/2adic/X11.html}{$X_{11}$}   \\
$\ZZ/4\ZZ\times\ZZ/8\ZZ$ &  \href{http://users.wfu.edu/rouseja/2adic/X12.html}{$X_{12}$}, \href{http://users.wfu.edu/rouseja/2adic/X13.html}{$X_{13}$}  &
$\ZZ/8\ZZ\times\ZZ/16\ZZ$ &  \href{http://users.wfu.edu/rouseja/2adic/X25.html}{$X_{25}$}, \href{http://users.wfu.edu/rouseja/2adic/X92.html}{$X_{92}$}   \\
$\ZZ/4\ZZ\times\ZZ/16\ZZ$ &  \href{http://users.wfu.edu/rouseja/2adic/X36.html}{$X_{36}$}  &
$\ZZ/8\ZZ\times\ZZ/32\ZZ$ &  \href{http://users.wfu.edu/rouseja/2adic/X193.html}{$X_{193}$}   \\
$\ZZ/4\ZZ\times\ZZ/32\ZZ$ &  \href{http://users.wfu.edu/rouseja/2adic/X235.html}{$X_{235}$} &
$\ZZ/16\ZZ\times\ZZ/16\ZZ$ & \href{http://users.wfu.edu/rouseja/2adic/X58.html}{$X_{58}$}      \\\hline
\end{tabular}
\caption{Parameterization of the possible nontrivial 2-primary components}  \label{tab:RZBData}
\end{table}
\renewcommand{\arraystretch}{1}

\begin{figure}
$$\xymatrix{
 X_{58}\ar@{-}[rd]  & X_{193}\ar@{-}[d]  &   &   & X_{192}\ar@{-}[dd]\ar@{-}[dr]  &   & X_{235}\ar@{-}[ld]  \\\
   & X_{25}\ar@{-}[d]  &   &   &   & X_{36}\ar@{-}[d]  &   \\\
   & X_8 \ar@{-}[rrd] & X_{12}\ar@{-}[rd]  &   & X_{11}\ar@{-}[ld]  & X_{13}\ar@{-}[lld]  &   \\\
   &   &   & X_6  &   &   &   \\\
}$$
\caption{Covering relationships between the curves in Table \ref{tab:RZBData}}\label{fig:RZB}
\end{figure}
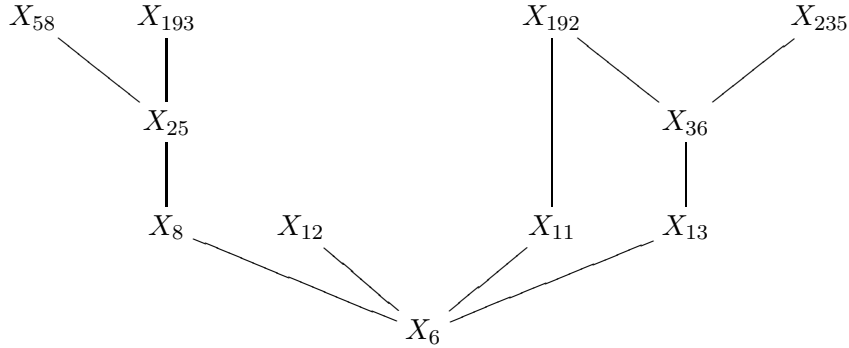

Together Propositions \ref{prop:D4Classification13}, \ref{prop:D4Classification7}, \ref{prop:D4Classification5}, \ref{prop:D4Classification3}, and \ref{prop:D4Classification2} complete the proof of Theorem \ref{thm:D4-upperbound}. 

\subsection{The case when \texorpdfstring{$E$}{E} has complex multiplication.}\label{subsec:CMCurves}

As mentioned in Proposition \ref{prop:twist} and Lemma \ref{lem:j=0}, if $E/\QQ$ is an elliptic curve, as long as $j(E) \neq 0$, the isomorphism class of $E(\QQ(D_4^\infty))_\tor$ does not change under twisting and if $j(E)$ is 0, then it is sufficient to compute the torsion structure for the curves \href{http://www.lmfdb.org/EllipticCurve/Q/27a1}{\texttt{27a1}}, \href{http://www.lmfdb.org/EllipticCurve/Q/36a1}{\texttt{36a1}}, and \href{http://www.lmfdb.org/EllipticCurve/Q/108a1}{\texttt{108a1}}. Using Proposition \ref{prop:FinitePrimesD4} to limit the computations, we compute $E(\QQ(D_4^\infty))_\tor$ for each of the necessary representatives. The results of these computations can be found in Table \ref{tab:CMTor}.

\begin{table}
\centering
\renewcommand{\arraystretch}{1.2}
\begin{tabular}{|l|l|| l| l|}\hline
$E/\QQ$ & $E(\QQ(D_4^\infty))_\tor$&$E/\QQ$ & $E(\QQ(D_4^\infty))_\tor$\\\hline
\href{http://www.lmfdb.org/EllipticCurve/Q/27a1}{\texttt{27a1}} & $\ZZ/3\ZZ \oplus \ZZ/3\ZZ$ &
\href{http://www.lmfdb.org/EllipticCurve/Q/108a2}{\texttt{108a2}} & $\ZZ/3\ZZ$  \\
\href{http://www.lmfdb.org/EllipticCurve/Q/27a4}{\texttt{27a4}} & $\ZZ/3\ZZ$ &
\href{http://www.lmfdb.org/EllipticCurve/Q/121b1}{\texttt{121b1}} & $\ZZ/3\ZZ \oplus \ZZ/3\ZZ$ \\
\href{http://www.lmfdb.org/EllipticCurve/Q/32a1}{\texttt{32a1}} & $\ZZ/8\ZZ \oplus \ZZ/8\ZZ$ &
\href{http://www.lmfdb.org/EllipticCurve/Q/256d1}{\texttt{256d1}} & $\ZZ/12\ZZ \oplus \ZZ/24\ZZ$  \\
\href{http://www.lmfdb.org/EllipticCurve/Q/32a4}{\texttt{32a4}} & $\ZZ/4\ZZ \oplus \ZZ/8\ZZ$ &
\href{http://www.lmfdb.org/EllipticCurve/Q/361a1}{\texttt{361a1}} & $\{\mathcal{O}\}$ \\
\href{http://www.lmfdb.org/EllipticCurve/Q/36a1}{\texttt{36a1}} & $\ZZ/8\ZZ \oplus \ZZ/24\ZZ$ &
\href{http://www.lmfdb.org/EllipticCurve/Q/1849a1}{\texttt{1849a1}} &$\{\mathcal{O}\}$ \\
\href{http://www.lmfdb.org/EllipticCurve/Q/36a2}{\texttt{36a2}} & $\ZZ/8\ZZ \oplus \ZZ/24\ZZ$ &
\href{http://www.lmfdb.org/EllipticCurve/Q/4489a1}{\texttt{4489a1}} & $\{\mathcal{O}\}$ \\
\href{http://www.lmfdb.org/EllipticCurve/Q/49a1}{\texttt{49a1}} & $\ZZ/8\ZZ \oplus \ZZ/8\ZZ$ &
\href{http://www.lmfdb.org/EllipticCurve/Q/26569a1}{\texttt{26569a1}} & $\{\mathcal{O}\}$ \\
\href{http://www.lmfdb.org/EllipticCurve/Q/49a2}{\texttt{49a2}} & $\ZZ/8\ZZ \oplus \ZZ/8\ZZ$ &
&  \\\hline
\end{tabular}
\caption{Torsion structures of CM Elliptic curves}  \label{tab:CMTor}
\end{table}
\renewcommand{\arraystretch}{1}

\section{Determining the possible torsion structures}\label{sec:TorsionStructures}

The last task is to completely determine the finite abelian groups $T$ that arise as the torsion subgroup of an elliptic curve defined over $\QQ$ base-extended to $\QQ(D_4^\infty)$ by determining which combinations of the possible $p$-primary components can occur. Throughout this section we will assume that $E$ is an elliptic curve without complex multiplication, since we have already fully determined the torsion structures for curves with complex multiplication in Table \ref{tab:CMTor}. 

Before we embark on this task, we look back on the previous section and make a useful observation.

\begin{cor}\label{cor:IsogFreeTorsion}
If $E/\QQ$ is an elliptic curve without CM and $p$ is a prime such that $E(\QQ(D_4^\infty))(p)$ is nontrivial, then either $E$ has a $p$-isogeny, or $p = 3$ and $E(\QQ(D_4^\infty)(3) = \ZZ/3\ZZ \oplus \ZZ/3\ZZ$. 
\end{cor}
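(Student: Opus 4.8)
The plan is to prove the corollary by reducing to a case-by-case check over the finitely many primes permitted by Proposition \ref{prop:FinitePrimesD4}, namely $p\in\{2,3,5,7,13\}$. The organizing principle is the following observation: since $\QQ(D_4^\infty)/\QQ$ is Galois (Remark \ref{remark:GaloisExtension}), the subgroup $E(\QQ(D_4^\infty))(p)\subseteq E(\Qbar)$ is stable under the action of $\Gal(\Qbar/\QQ)$. Consequently, whenever $E(\QQ(D_4^\infty))(p)$ is \emph{cyclic}, say of order $p^k$, it is a Galois-stable cyclic subgroup of $E[p^k]$ (and its unique order-$p$ subgroup, being characteristic, is Galois-stable as well), which is precisely the condition defining a rational cyclic isogeny. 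Thus cyclicity of the $p$-primary component immediately yields a rational $p$-isogeny on $E$.

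With this observation in hand, I would first dispatch $p=7$ and $p=13$: by Propositions \ref{prop:D4Classification7} and \ref{prop:D4Classification13} the corresponding $p$-primary component is forced to be $\ZZ/p\ZZ$, which is cyclic, so a rational $p$-isogeny follows at once. For $p=5$ there is nothing new to prove, since Corollary \ref{cor:Nontriv5ImplesIsog} already guarantees a rational $5$-isogeny whenever $5$ divides $\#E(\QQ(D_4^\infty))_\tor$. Next I would treat $p=3$: by Proposition \ref{prop:D4Classification3} the only possibilities for $E(\QQ(D_4^\infty))(3)$ are $\ZZ/3\ZZ$, $\ZZ/9\ZZ$, and $\ZZ/3\ZZ\oplus\ZZ/3\ZZ$. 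The first two are cyclic and hence again produce a rational $3$-isogeny; the third is exactly the exceptional case recorded in the statement, and as the curve \href{http://www.lmfdb.org/EllipticCurve/Q/338e1}{\texttt{338e1}} illustrates it need not carry a rational $3$-isogeny, so no further argument is needed there.

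Finally, $p=2$ must be handled separately, because by Proposition \ref{prop:D4Classification2} the $2$-primary component is \emph{never} cyclic, so the Galois-stable-cyclic-subgroup argument does not apply. Instead I would invoke Proposition \ref{prop:Triv2TorOrFull4Tor}: a nontrivial $2$-primary component forces $E(\QQ(D_4^\infty))[2]$ to be nontrivial, and the proof of that proposition shows this occurs exactly when $E$ has a point of order $2$ defined over $\QQ$. Such a rational point generates a rational, hence Galois-stable, subgroup of order $2$, giving a rational $2$-isogeny. I do not anticipate a genuine obstacle in this corollary; the only subtleties demanding care are to route the $p=2$ step through the existence of a rational $2$-torsion point rather than through cyclicity of the primary component, and to present the $\ZZ/3\ZZ\oplus\ZZ/3\ZZ$ case as the authentic exception rather than an omission.
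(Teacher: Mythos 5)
Your proof is correct. Strictly speaking, the paper offers no written proof of this corollary: it is presented as an observation to be read off from the propositions of Section \ref{sec:pPrimary}, and the paper's later citations (e.g.\ in the proof of Proposition \ref{prop:13TorsionClassification}) show that its intended mechanism for converting torsion points into isogenies is Lemma \ref{lem:isogp^j-k}. Your route replaces that cited lemma with a self-contained observation: because $\QQ(D_4^\infty)/\QQ$ is Galois, the $p$-primary component is stable under $\Gal(\Qbar/\QQ)$, so whenever it is cyclic its unique order-$p$ subgroup is a Galois-stable kernel, i.e.\ $E$ has a rational $p$-isogeny. Combined with the classifications of Section \ref{sec:pPrimary}, this disposes of $p=7,13$, the cyclic cases of $p=3$, and (via Corollary \ref{cor:Nontriv5ImplesIsog}) $p=5$, leaving $\ZZ/3\ZZ\oplus\ZZ/3\ZZ$ as the stated exception. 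Your handling of $p=2$ is where the extra care genuinely pays off: none of the possible $2$-primary components is cyclic, and Lemma \ref{lem:isogp^j-k} would also give nothing in the self-dual cases $\ZZ/2^a\ZZ\oplus\ZZ/2^a\ZZ$ (it produces only a trivial isogeny when $j=k$), so passing through the proof of Proposition \ref{prop:Triv2TorOrFull4Tor} --- a nontrivial $2$-primary part forces a rational point of order $2$, whose span is Galois-stable --- is exactly the right move, and it makes explicit a subtlety that the paper's unproved observation glosses over. Both arguments reach the same conclusion; yours is more self-contained and slightly more careful at $p=2$.
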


\subsection{The cases when $13$ or $7$ divides $\#E(\QQ(D_4^\infty))_\tor$ }

We group the case of $13$ and $7$ diving \texorpdfstring{$\#E(\QQ(D_4^\infty))_\tor$}{E(D4)} together because their classifications are similar in spirit. 

\begin{prop}\label{prop:13TorsionClassification}
Let $E/\QQ$ be an elliptic curve such that $13$ divides $\#E(\QQ(D_4^\infty))_\tor$. Then it follows that $E(\QQ(D_4^\infty))_\tor$ is isomorphic to $\ZZ/13\ZZ$. 
\end{prop}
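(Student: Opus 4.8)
The plan is to leverage the structural information already obtained in Proposition \ref{prop:D4Classification13} together with the classification of rational isogenies. By Proposition \ref{prop:D4Classification13} we already know $E(\QQ(D_4^\infty))(13)\simeq\ZZ/13\ZZ$ and that $E$ admits a rational $13$-isogeny, and by Proposition \ref{prop:FinitePrimesD4} the only primes that can divide $\#E(\QQ(D_4^\infty))_\tor$ lie in $\{2,3,5,7,13\}$. So it suffices to show that the $2$-, $3$-, $5$-, and $7$-primary components are trivial. As throughout this section, I may assume $E$ has no complex multiplication, the CM curves having been settled in Table \ref{tab:CMTor}.

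The main engine is Corollary \ref{cor:IsogFreeTorsion}: if a prime $p$ divides $\#E(\QQ(D_4^\infty))_\tor$, then either $E$ admits a rational $p$-isogeny, or else $p=3$ and $E(\QQ(D_4^\infty))(3)\simeq\ZZ/3\ZZ\oplus\ZZ/3\ZZ$. Suppose first that for some $p\in\{2,3,5,7\}$ the curve $E$ admits a rational $p$-isogeny, with Galois-stable cyclic kernel $C_p$ of order $p$. Since $E$ also has the rational $13$-isogeny with kernel $C_{13}$, and $\gcd(p,13)=1$, the subgroup $C_p\oplus C_{13}$ is a Galois-stable cyclic subgroup of order $13p$, so $E$ carries a rational $13p$-isogeny. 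But $13p\in\{26,39,65,91\}$, and none of these values appears in the list of permitted isogeny degrees from Theorem \ref{thm:IsoTypes}. This contradiction rules out every case in which the offending prime produces a rational isogeny.

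By Corollary \ref{cor:IsogFreeTorsion}, the only remaining possibility is that $p=3$ contributes with $E(\QQ(D_4^\infty))(3)\simeq\ZZ/3\ZZ\oplus\ZZ/3\ZZ$ and no rational $3$-isogeny, in which case $\Im\bar\rho_{E,3}$ is contained in the normalizer of the split Cartan subgroup of $\GL_2(\ZZ/3\ZZ)$. This is the one configuration the coprime-isogeny argument cannot reach, and I expect it to be the main obstacle. The goal is to show that no non-CM $E/\QQ$ can simultaneously carry a rational $13$-isogeny and have mod-$3$ image inside the normalizer of the split Cartan. One clean route is to form the fiber product over $X(1)$ of $X_0(13)$ with the genus $0$ modular curve parametrizing mod-$3$ image in the normalizer of the split Cartan; this is a modular curve of level $39$ whose genus one computes to be at least $2$, so by Faltings it has finitely many rational points, which one then checks to be cusps or to have CM $j$-invariant. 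Concretely, and perhaps more in the computational spirit of the paper, one can instead equate the $j$-invariant parametrization of Proposition \ref{prop:D4Classification13} with the parametrization $j=27(t+1)^3(t-3)^3/t^3$ from the $\ZZ/3\ZZ\oplus\ZZ/3\ZZ$ case of Proposition \ref{prop:D4Classification3} and verify that the resulting curve has no rational points beyond those giving CM curves. Either way one reaches a contradiction with the non-CM hypothesis, so the $3$-primary component is trivial as well.

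Combining the three cases, every prime $p\neq 13$ contributes trivially, whence $E(\QQ(D_4^\infty))_\tor\simeq E(\QQ(D_4^\infty))(13)\simeq\ZZ/13\ZZ$, as claimed. The routine steps are the coprime-isogeny composition and the citation bookkeeping; the genuine content is confined to excluding the split-Cartan-at-$3$ configuration in the presence of a $13$-isogeny.
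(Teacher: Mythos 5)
Your reduction steps are correct and match the paper's: the appeal to Proposition \ref{prop:D4Classification13} for the $13$-part, the coprime-isogeny composition (a Galois-stable $C_p\oplus C_{13}$ is cyclic of order $13p\in\{26,39,65,91\}$, contradicting Theorem \ref{thm:IsoTypes}), and the identification of the split-Cartan-normalizer configuration at $3$ as the one case this argument cannot reach. This is exactly the skeleton of the paper's proof, which handles the isogeny bookkeeping in one (terse) sentence and then devotes all its effort to the remaining case.

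The gap is in how you dispose of that remaining case. Faltings' theorem is ineffective: it tells you the fiber product (which the paper computes to be a singular curve of geometric genus $13$) has finitely many rational points, but it gives no height bound and no procedure by which "one then checks" that all of them are cusps, singular, or CM. Likewise your alternative phrasing --- equate $j_{13}(x)$ with the $j$-map for full $3$-torsion and "verify that the resulting curve has no rational points beyond those giving CM curves" --- simply restates the problem; that verification is the entire content of the proposition. The paper supplies the missing method: it exploits the symmetries $j_{13}\left(\frac{1}{1-t}\right)=j_{13}(t)$ and $j_3\left(\frac{-3}{t}\right)=j_3(t)$ to produce a group $G$ of automorphisms of the fiber product $X$, passes to the quotient $H=X/G$, which is a genus $2$ hyperelliptic curve $y^2=x^6+10x^3-27$ whose Jacobian has Mordell--Weil group $\ZZ/6\ZZ$ (rank $0$), enumerates $H(\QQ)$ (only the two points at infinity), and pulls these back through the quotient map to conclude that $X(\QQ)$ consists of the $4$ singular points, which correspond to $j=0$ or $1728$ and were handled in the CM case. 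Without some effective technique of this kind --- a rank-$0$ (or Chabauty-accessible) quotient, a sieve, or descent --- your final step is a plan rather than a proof, so as written the argument does not close.
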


\begin{proof}
From Lemma \ref{lem:isogp^j-k} and Proposition \ref{prop:D4Classification13}, the only way that $13$ can divide $\#E(\QQ(D_4^\infty))_\tor$ is if $E$ has a 13-isogeny. This combined with Corollary \ref{cor:IsogFreeTorsion} and Theorem \ref{thm:IsoTypes} shows that the only other possible nontrivial $p$-primary component is $p=3$. Thus, the last case that needs to be ruled out is the case that $E$ also has full 3-torsion defined over $\QQ(D_4^\infty)$. 

In order to rule out this last possibility we construct the modular curve that would parametrize elliptic curves with full 3-torsion and a point of order 13 defined over $\QQ(D_4^\infty)$. We can construct this modular curve by taking the fiber product of the $j$-maps defined in Propositions \ref{prop:D4Classification3} and \ref{prop:D4Classification13}. For shorthand we will denote the relevant $j$-maps by $j_3(t)$ and $j_{13}(t)$ respectively. The resulting modular curve $X$, is genus 13 with 4 singular points. In order to show that there are no elliptic curves defined over $\QQ$ with full 3-torsion and a point of order 13 defined over $\QQ(D_4^\infty)$ we need to show that there are no nonsingular and noncuspidal rational points on $X$. We can restrict to the nonsingular points here and else where in the paper since the singular points can only correspond to $j=0$ or $j=1728$ and those isomorphism classes have been dealt with in Section \ref{subsec:CMCurves}.

Inspecting $j_3$ and $j_{13}$ we see that they are each invariant under a different fractional linear transformation. In particular,
\[
j_{13}\left(\frac{1}{1-t}\right) = j_{13}(t)\hbox{ and } j_3\left(\frac{-3}{t}\right) = j_3(t).
\]
Using these symmetries, we can construct two distinct automorphisms of the curve $X$ and compute the quotient curve of $X$ by the group $G\subseteq \Aut(X)$, that they generate. Let $\pi\colon X \to X/G = H$ be the quotient map. Then $\pi$ is a surjective map whose image, $H$ is a nonsingular genus 2 hyperelliptic curve. The curve $H$ has a simplified model given by 
\[
H\colon y^2 = x^6 + 10x^3 - 27.
\]
Letting $J$ be the jacobian of $H$, we can use Magma to compute that $J(\QQ)\simeq \ZZ/6\ZZ.$ Thus, in order to find all of the points on $H$ we can use the pullback of the natual embedding of $H(\QQ)$ into $J(\QQ)$. Using Magma we can see that the only points on $H$ that are defined over $\QQ$ are the two points at infinity (there are \emph{two} points at infinity since $H$ has the form $y^2 = f(x)$ with $\deg(f)$ even).
Computing the pullback of these two points at infinity under $\pi$, we see that the only points on $X(\QQ)$ are the 4 singular points.
\end{proof}

\begin{prop}\label{prop:7TorsionClassification}
Let $E/\QQ$ be an elliptic curve such that $7$ divides $\#E(\QQ(D_4^\infty))_\tor$. Then it follows that $E(\QQ(D_4^\infty))_\tor$ is isomorphic to $\ZZ/7\ZZ$. 
\end{prop}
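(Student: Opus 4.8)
The plan is to follow the template of the proof of Proposition \ref{prop:13TorsionClassification}, but with more cases to eliminate because $21 = 3\cdot 7$ is a permissible isogeny degree (whereas $39 = 3\cdot 13$ was not). First I would record that, by Lemma \ref{lem:isogp^j-k} and Proposition \ref{prop:D4Classification7}, the hypothesis forces $E$ to admit a rational $7$-isogeny and forces $E(\QQ(D_4^\infty))(7) \simeq \ZZ/7\ZZ$ (there is no rational $49$-isogeny). By Proposition \ref{prop:FinitePrimesD4} the only primes that can divide $\#E(\QQ(D_4^\infty))_\tor$ are $2,3,5,7,13$, so it remains to show that none of $2,3,5,13$ divides it. By Corollary \ref{cor:IsogFreeTorsion}, for $p \in \{2,5,13\}$ a nontrivial $p$-primary component forces a rational $p$-isogeny, which together with the $7$-isogeny yields a rational $7p$-isogeny; since $35$ and $91$ are excluded by Theorem \ref{thm:IsoTypes}, the primes $5$ and $13$ are immediately eliminated. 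This leaves $p = 2$ and $p = 3$.

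For $p = 2$: if $2 \mid \#E(\QQ(D_4^\infty))_\tor$ then, by Proposition \ref{prop:Triv2TorOrFull4Tor}, $E$ has a rational point of order $2$ and hence a rational $2$-isogeny; composing with the $7$-isogeny gives a rational $14$-isogeny. The modular curve $X_0(14)$ has genus $1$ and only finitely many rational points, so there are only finitely many $j$-invariants of elliptic curves over $\QQ$ admitting a $14$-isogeny. By Proposition \ref{prop:twist} the torsion structure over $\QQ(D_4^\infty)$ depends only on $j(E)$ (the cases $j = 0,1728$ are handled in Section \ref{subsec:CMCurves}), so I would compute $E(\QQ(D_4^\infty))_\tor$ for one representative of each such $j$-invariant and verify that $7$ never divides it; this contradicts the standing hypothesis and rules out $p = 2$.

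For $p = 3$: by Proposition \ref{prop:D4Classification3} a nontrivial $3$-primary component is $\ZZ/3\ZZ$, $\ZZ/9\ZZ$, or $\ZZ/3\ZZ\oplus\ZZ/3\ZZ$. The component $\ZZ/9\ZZ$ requires a rational $9$-isogeny, which with the $7$-isogeny would give a $63$-isogeny, excluded by Theorem \ref{thm:IsoTypes}. The component $\ZZ/3\ZZ$ requires a rational $3$-isogeny, giving a rational $21$-isogeny; since $X_0(21)$ also has genus $1$, the same finiteness-plus-direct-computation argument used for $p = 2$ applies. The genuinely delicate case is $\ZZ/3\ZZ\oplus\ZZ/3\ZZ$, where $\Im\bar\rho_{E,3}$ lies in the normalizer of the split Cartan subgroup and $E$ need not have a $3$-isogeny, so the isogeny-degree bound gives nothing. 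Here I would form the fiber product $X$ of the $j$-map of Proposition \ref{prop:D4Classification7} (call it $j_7$) with the split-Cartan $j$-map $j_3$ of Proposition \ref{prop:D4Classification3}, exactly as in Proposition \ref{prop:13TorsionClassification}, exploiting symmetries of $j_7$ and $j_3$ analogous to those in the $p = 13$ case to pass to a lower-genus quotient $H = X/G$, compute $J(H)(\QQ)$, and show by a Jacobian/Chabauty argument that the only rational points of $X$ are singular or cuspidal.

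I expect this last step to be the main obstacle. As in the $p = 13$ case, the fiber product has fairly large genus, and success hinges on locating a quotient whose Jacobian has rank $0$ (or otherwise finitely many rational points that can be provably enumerated) so that the pullback of $H(\QQ)$ meets $X(\QQ)$ only in the singular and cuspidal locus. Once all of $2,3,5,13$ are eliminated, the $7$-primary bound $E(\QQ(D_4^\infty))(7)\simeq\ZZ/7\ZZ$ yields $E(\QQ(D_4^\infty))_\tor\simeq\ZZ/7\ZZ$.
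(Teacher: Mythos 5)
Your proposal follows essentially the same route as the paper's proof: isogeny-degree bounds (Theorem \ref{thm:IsoTypes}) eliminate $5$, $13$, and the $\ZZ/9\ZZ$ case; the $2$-primary and cyclic $3$-primary cases reduce to the finitely many $\Qbar$-isomorphism classes with $14$- or $21$-isogenies followed by a finite check; and the $\ZZ/3\ZZ\oplus\ZZ/3\ZZ$ case is handled exactly as in the paper, by quotienting the fiber product of $j_7$ and $j_3$ by the symmetries to reach a genus $2$ hyperelliptic curve (in the paper, $y^2=x^6+26x^3-27$) whose Jacobian has only finitely many rational points. The only cosmetic difference is that for the $14$- and $21$-isogeny classes the paper checks the more targeted necessary condition that the $7$-torsion point be defined over a quadratic field, rather than computing the full torsion over $\QQ(D_4^\infty)$ as you suggest, but both amount to the same finite verification.
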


\begin{proof}
Again, from Lemma \ref{lem:isogp^j-k} and Proposition \ref{prop:D4Classification7}, the only way that $7$ can divide $\#E(\QQ(D_4^\infty))_\tor$ is if $E$ has a 7-isogeny. 
The difference this time is that according to Theorem \ref{thm:IsoTypes} there are elliptic curves with 7-isogenies and 2- or 3-isogenies as well. Thus our isogeny argument still leaves open the possibility that $E$ has a point of order 14 or 21 defined over $\QQ(D_4^\infty)$. But, looking at the proof of Proposition \ref{prop:D4Classification7} we see that the only way that $7$ can divide $\#E(\QQ(D_4^\infty))_\tor$ is if $E$ has a 7-isogeny defined over a quadratic field. From \cite[Table 4]{lozano1} we can see that up to $\Qbar$-isomorphism there are exactly 2 elliptic curves with 14-isogenies and 4 elliptic curves with 21-isogenies. Checking these curves by hand we see that none of them have a point of order 7 defined over a quadratic field and thus in these cases 7 doesn't divide $\#E(\QQ(D_4^\infty))_\tor$. 

From Corollary \ref{cor:IsogFreeTorsion}, the only other possibility is that $E$ has its full 3-torsion defined over $\QQ(D_4^\infty)$. Let $j_7(t)$ and $j_3(t)$ be the relevant $j$-maps and construct the curve given by the affine equation defined by the numerator of $j_3(x) - j_7(y)$. The curve $X$ that is defined by this equation is a singular genus 7 curve, but again we can construct two automorphisms from the fact that 
\[
j_{7}\left(\frac{1}{1-t}\right) = j_{7}(t)\hbox{ and } j_3\left(\frac{-3}{t}\right) = j_3(t).
\]
Taking the quotient of $X$ by the subgroup generated by these two automorphisms leaves a hyperelliptic curve $H$ with genus 2. The curve $H$ is given by the simplified equation 
\[
H\colon y^2 = x^6 + 26x^3 - 27.
\]
Letting $J$ be the jacobian of $H$ we use Magma to compute that $J(\QQ) \simeq \ZZ/2\ZZ\oplus\ZZ/6\ZZ$ and again using the pullback of the natural embedding of $H(\QQ)$ into $J(\QQ)$ we get that
\[
H(\QQ) = \{ (-3 , 0), (1 , 0), \infty_+,\infty_- \}.
\]
Computing the pullback of these points through the surjective quotient map, we get that the only points in $X(\QQ)$ are the singular ones. Therefore, there are no elliptic curves with a point of order 7 and full 3-torsion defined over $\QQ(D_4^\infty)$. 
\end{proof}

\begin{remark}
Some of these computations could have been avoided by using the modular interpretation of the quotient curves in Proposition \ref{prop:13TorsionClassification} and \ref{prop:7TorsionClassification}. In those Propositions, the curves points on quotient curves $H$ corresponds to elliptic curves who mod 13 (and respectively mod 7) Galois representations are nonsplit, but the image of the mod 3 representation is contained inside of the 2-Sylow subgroup of $\GL_2(\FF_3)$. Since there are no such curves, there are no rational points on each $H$. 
\end{remark}

\subsection{The case when $5$ divides \texorpdfstring{$\#E(\QQ(D_4^\infty)_\tor$}{E(D4)}}

\begin{prop}\label{prop:Full5TorsionClassification}
Suppose that $E/\QQ$ is an elliptic curve such that $E(\QQ(D_4^\infty))_\tor$ contains a subgroup isomorphic to $\ZZ/5\ZZ \oplus \ZZ/5\ZZ$. Then $E(\QQ(D_4^\infty))_\tor$ is isomorphic to $\ZZ/5\ZZ \oplus \ZZ/5\ZZ$. 
\end{prop}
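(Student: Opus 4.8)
The plan is to show that once $E(\QQ(D_4^\infty))_\tor$ contains full $5$-torsion, every other $p$-primary component must vanish, leaving exactly $\ZZ/5\ZZ\oplus\ZZ/5\ZZ$. By Corollary \ref{cor:Nontriv5ImplesIsog} the hypothesis forces $\Im\bar\rho_{E,5}$ into the split Cartan subgroup of $\GL_2(\ZZ/5\ZZ)$, so $E$ carries two independent rational $5$-isogenies, with kernels $C_5$ and $C_5'$; moreover Proposition \ref{prop:D4Classification5} already guarantees that the $5$-primary part contains no point of order $25$, so it is exactly $\ZZ/5\ZZ\oplus\ZZ/5\ZZ$. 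By Proposition \ref{prop:FinitePrimesD4} the only other primes that can divide $\#E(\QQ(D_4^\infty))_\tor$ are $2,3,7,13$, so it suffices to exclude each of these.

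First I would dispose of $7$ and $13$ by composing isogenies. By Corollary \ref{cor:IsogFreeTorsion}, if $7$ (resp.\ $13$) divided the torsion then $E$ would admit a rational $7$-isogeny with kernel $C_7$ (resp.\ a $13$-isogeny with kernel $C_{13}$). Since $\gcd(5,7)=\gcd(5,13)=1$, the subgroup $C_5\oplus C_7$ (resp.\ $C_5\oplus C_{13}$) is a rational cyclic subgroup of order $35$ (resp.\ $65$), yielding a rational $35$- or $65$-isogeny. As neither $35$ nor $65$ occurs in Theorem \ref{thm:IsoTypes}, this is impossible, so $7,13\nmid \#E(\QQ(D_4^\infty))_\tor$.

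Next I would treat $3$, splitting into the cases of Corollary \ref{cor:IsogFreeTorsion}. If $E$ has a rational $3$-isogeny with kernel $C_3$ (this covers the cyclic $3$-parts and the full $3$-part with an isogeny), then $C_5\oplus C_3$ is a rational cyclic subgroup of order $15$, so $E$ has a rational $15$-isogeny and is one of the finitely many curves given by the noncuspidal points of $X_0(15)$, a genus $1$ curve of rank $0$. For each of these finitely many $j$-invariants I would compute $\Im\bar\rho_{E,5}$ and verify it is never contained in the split Cartan, so none has full $5$-torsion over $\QQ(D_4^\infty)$. The one remaining possibility at $3$ is that $E(\QQ(D_4^\infty))(3)\simeq\ZZ/3\ZZ\oplus\ZZ/3\ZZ$ with no rational $3$-isogeny, which by the proof of Proposition \ref{prop:D4Classification3} puts $\Im\bar\rho_{E,3}$ in the normalizer of the split Cartan of $\GL_2(\ZZ/3\ZZ)$. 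Finally, by Proposition \ref{prop:Triv2TorOrFull4Tor}, $2$ divides the torsion if and only if $E$ has a rational point of order $2$, i.e.\ a point on $X_0(2)$.

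For the two surviving cases (normalizer of split Cartan mod $3$, and a rational $2$-torsion point) I would follow the strategy of Propositions \ref{prop:13TorsionClassification} and \ref{prop:7TorsionClassification}: form the fiber product over the $j$-line of the genus-$0$ curve carrying the full $5$-torsion $j$-map of Proposition \ref{prop:D4Classification5} with, respectively, the genus-$0$ $j$-map for the normalizer of the split Cartan mod $3$ and the genus-$0$ $j$-map of $X_0(2)$. Each fiber product is a singular, high-genus curve $X$ whose nonsingular noncuspidal rational points would produce a forbidden curve; using the fractional-linear symmetries of the two $j$-maps to build automorphisms of $X$, I would pass to the quotient, expect to land on a hyperelliptic curve of genus $2$, and determine its rational points by computing the Mordell--Weil group of its Jacobian and pulling back along the quotient map. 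The hard part is precisely this last step: bounding the genus of the fiber products, producing enough automorphisms to reduce to a curve whose Jacobian has provably finite (and computable) Mordell--Weil group, and checking that every rational point that survives is cuspidal or corresponds to $j=0$ or $j=1728$, already handled in Section \ref{subsec:CMCurves}. Excluding all of $2,3,7,13$ then leaves the $5$-part as the only nontrivial primary component, so $E(\QQ(D_4^\infty))_\tor\simeq\ZZ/5\ZZ\oplus\ZZ/5\ZZ$.
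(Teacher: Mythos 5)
Your overall architecture is sound and most cases are handled correctly, but you miss the paper's one key trick, and this leaves a genuine gap at $p=2$. The paper observes that the two independent $5$-isogenies already compose to a cyclic $25$-isogeny (dualize one of them: $E/C_5' \to E \to E/C_5$), so if $E$ had any additional independent rational $n$-isogeny with $n>1$, the isogeny graph would produce a cyclic $25n$-isogeny; since $50,\,75,\,175,\,325$ all violate Theorem \ref{thm:IsoTypes}, every case $n=2,3,7,13$ dies at once. You instead compose with only \emph{one} $5$-isogeny kernel, getting degrees $5n$. That suffices for $n=7,13$ (degrees $35,65$ are forbidden), but fails for $n=3$ and $n=2$, because $15$ and $10$ \emph{are} admissible isogeny degrees. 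For $n=3$ you recover with a legitimate finite check on the four $\Qbar$-classes of curves with $15$-isogenies (this works, though it is heavier than necessary). For $n=2$, however, you propose building the fiber product of the degree-$30$ split-Cartan-mod-$5$ $j$-map with the $X_0(2)$ $j$-map and resolving its rational points via automorphism quotients and Jacobian computations --- and you yourself flag that bounding the genus, finding enough automorphisms, and getting a Jacobian of computable Mordell--Weil rank is "the hard part." That step is not established; nothing in your write-up guarantees the quotient lands on a curve you can handle, so as written the $p=2$ case is open. The $25n$ observation replaces all of this with one line.

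On the genuinely unavoidable case --- full $3$-torsion with no rational $3$-isogeny, where no isogeny argument can apply --- your plan (fiber product of the two $j$-maps, quotient by the automorphisms coming from the fractional-linear symmetries, compute rational points via the Jacobian) is exactly the paper's. One factual correction: the paper's computation starts from a singular genus-$9$ curve, passes to a genus-$4$ intermediate quotient, and lands on a rank-zero \emph{genus-$1$} curve (Cremona \texttt{15a8}, with Mordell--Weil group $\ZZ/4\ZZ$), not on a genus-$2$ hyperelliptic curve as you predict; the points pull back to only singular points of the fiber product, as needed.
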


\begin{proof}
We start by remarking that Corollary \ref{cor:Nontriv5ImplesIsog} says that $E/\QQ$ has its full 5-torsion defined over $\QQ(D_4^\infty)$ if and only if $E$ has two independent 5-isogenies. So first suppose towards a contradiction that $E$ has an $n$-isogeny independent of its two 5-isogenies for some $n>1$. 

If this were the case, the isogeny graph of $E$ would have to contain a subgraph of the form
\[
\xymatrix{
E_1\ar@{<->}[r]^5 \ar@{<->}[d]_n & E \ar@{<->}[r]^5 \ar@{<->}[d]_n & E_2\ar@{<->}[d]^n \\
\widetilde{E}_1  & \widetilde{E} & \widetilde{E}_2 \\  
}
\]
where the number associated to each arrow is the degree of the isogeny. This would imply that there is a $25n$-isogeny between $\widetilde{E}_1$ and $E_2$, but looking at Theorem \ref{thm:IsoTypes} we see that this would force $n$ to be 1, giving us our contradiction. Thus, if $E$ has its full 5-torsion defined over $\QQ(D_4^\infty)$ it can't have any isogenies other than its two independent 5-isogenies. 

From Corollary \ref{cor:IsogFreeTorsion} the last case to rule out is that $E/\QQ$ has its full 15-torsion defined over $\QQ(D_4^\infty)$. To do this let $j_5$ and $j_3$ be the $j$-maps associated with having full 3- and 5-torsion defined over $\QQ(D_4^\infty)$ given in Propositions \ref{prop:D4Classification5} and \ref{prop:D4Classification3}. Next, define $X$ to be the curve given by the affine equation $j_3(x) = j_5(y)$. This time the corresponding curve $X$ is singular with genus 9 and while we are able to compute two separate automorphisms as before, Magma can only easily quotient by one of them at a time. After quotienting out by the first automorphism, we are left with a curve $C$ of genus 4. In this case Magma is able to compute $\Aut(C)$ in a reasonable amount of time, and we get that $\Aut(C) \simeq \ZZ/2\ZZ\oplus\ZZ/2\ZZ$. If we quotient out by the full automorphism group of $C$ we are left with a genus 0 curve and this is not useful. Instead, we pick a subgroup of $\Aut(C)$ of order 2 to quotient out by and we are left with a genus 1 elliptic curve $F/\QQ$. The curve that Magma gives back has large coefficients (the smallest one has over 1000 digits) so it is cumbersome to work with, but with some work we are able to show that $F$ is isomorphic to the curve with Cremona label \href{http://www.lmfdb.org/EllipticCurve/Q/15a8}{\texttt{15a8}} over $\QQ$. The Mordel--Weil group over $\QQ$ of \href{http://www.lmfdb.org/EllipticCurve/Q/15a8}{\texttt{15a8}} is isomorphic to $\ZZ/4\ZZ$, and so we find a point of order 4 on $F$ defined over $\QQ$ and pull its multiples back through the reduction maps to see that $X(\QQ)$ only contains singular points. 
\end{proof}

\begin{prop}\label{prop:C53TorsionClassification}
Suppose that $E/\QQ$ is an elliptic curve such that the $5$-primary component of $E(\QQ(D_4^\infty))_\tor$ is isomorphic to $\ZZ/5\ZZ$. Then the $3$-primary component of $E(\QQ(D_4^\infty))_\tor$ is either trivial or isomorphic to $\ZZ/3\ZZ$ or  $\ZZ/3\ZZ \oplus \ZZ/3\ZZ$. 
\end{prop}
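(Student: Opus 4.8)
The plan is to reduce the question to an impossible isogeny degree and invoke Theorem~\ref{thm:IsoTypes}. By Corollary~\ref{cor:Nontriv5ImplesIsog}, the hypothesis that the $5$-primary component of $E(\QQ(D_4^\infty))_\tor$ is nontrivial together with the standing assumption that $E$ has no complex multiplication forces $E$ to admit a rational $5$-isogeny, whose kernel $C_5\subseteq E[5]$ is a Galois-stable cyclic subgroup of order $5$. On the other side, Proposition~\ref{prop:D4Classification3} tells us that for a non-CM curve the $3$-primary component of $E(\QQ(D_4^\infty))_\tor$ can only be $\ZZ/3\ZZ$, $\ZZ/9\ZZ$, or $\ZZ/3\ZZ\oplus\ZZ/3\ZZ$. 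Since $\ZZ/3\ZZ$ and $\ZZ/3\ZZ\oplus\ZZ/3\ZZ$ already appear among the allowed conclusions, the only case that must be excluded is $E(\QQ(D_4^\infty))(3)\simeq\ZZ/9\ZZ$.

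I would therefore argue by contradiction, assuming $E(\QQ(D_4^\infty))(3)\simeq\ZZ/9\ZZ$. Because this $3$-primary component is cyclic, $E$ does not have its full $3$-torsion defined over $\QQ(D_4^\infty)$; hence, in the notation of Lemma~\ref{lem:isogp^j-k} with $p=3$, the largest integer $k$ for which $E[3^k]\subseteq E(\QQ(D_4^\infty))$ is $k=0$. Applying that lemma with $j=2$ then produces a rational $3^{2-0}=9$-isogeny on $E$, whose kernel $C_9\subseteq E[9]$ is a Galois-stable cyclic subgroup of order $9$.

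Finally I would combine the two isogenies. The kernels $C_5$ and $C_9$ are both Galois-stable and cyclic, and since $\gcd(5,9)=1$ they meet only in $\mathcal{O}$, so $C_5+C_9$ is again Galois-stable and cyclic, now of order $45$. The quotient of $E$ by $C_5+C_9$ is thus a rational cyclic $45$-isogeny. But $45\notin\{1,\dots,19\}\cup\{21,25,27,37,43,67,163\}$, which contradicts Theorem~\ref{thm:IsoTypes}. This rules out the $\ZZ/9\ZZ$ case and leaves exactly the three listed possibilities for the $3$-primary component.

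There is no serious technical obstacle in this argument; unlike the $5$- and $3$-torsion combinations treated via fiber products and Jacobian computations elsewhere in this section, the $\ZZ/9\ZZ$ case is governed by a genuine rational $9$-isogeny and so can be dispatched purely on isogeny degrees. The one point deserving a moment's care is the last step, namely verifying that a rational $5$-isogeny and a rational $9$-isogeny really do compose to a \emph{cyclic} $45$-isogeny rather than to a noncyclic subgroup scheme; this is precisely where the coprimality of the two degrees is used.
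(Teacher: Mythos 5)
Your proof is correct, and its core is exactly the paper's own argument: after invoking Proposition~\ref{prop:D4Classification3} the only case to eliminate is $\ZZ/9\ZZ$, and both you and the paper dispose of it by noting that a rational $9$-isogeny together with the rational $5$-isogeny (Corollary~\ref{cor:Nontriv5ImplesIsog}) would yield a rational cyclic $45$-isogeny, contradicting Theorem~\ref{thm:IsoTypes} --- you simply make explicit, via Lemma~\ref{lem:isogp^j-k} and the coprime-kernel composition, two steps the paper leaves implicit. The only substantive difference is that the paper's proof goes on to a modular-curve computation (a fiber product whose desingularization is the elliptic curve \texttt{15a3}) to determine the two $j$-invariants for which $\ZZ/3\ZZ\oplus\ZZ/15\ZZ$ actually occurs; that is not claimed in the proposition itself, but it is used in the proposition that follows, so the paper packages it here.
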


\begin{proof}
Let $E$ be an elliptic curve such that the $5$-primary component of $E(\QQ(D_4^\infty))_\tor$ is isomorphic to $\ZZ/5\ZZ$. This means that $E$ has a rational 5-isogeny and while we expect that generically this is the only isogeny $E$ has, it is possible that $E$ does in fact also have a 3-isogeny. In this case, $E(\QQ(D_4^\infty))_\tor$ will automatically contain a point of order 15. Since there are no elliptic curves with 45-isogenies, the last possibility for the 3-primary component of $E(\QQ(D_4^\infty))_\tor$ is that it is isomorphic to $\ZZ/3\ZZ\oplus\ZZ/3\ZZ$.

Again, we start by constructing the associated modular curve using the relevant $j$-maps, this time getting a singular curve $X$ that is genus 1. Finding a nonsingular model for this curve, shows that it is isomorphic over $\QQ$ to the elliptic curve $F$ with Cremona reference \href{http://www.lmfdb.org/EllipticCurve/Q/15a8}{\texttt{15a3}}. From \cite{lmfdb} we see that $F(\QQ)\simeq \ZZ/2\ZZ\oplus\ZZ/4\ZZ$ and pulling these points back to $X$ we get that the nonsingular points of $X(\QQ)$ are exactly 
$$\{(9/2 , -5/8 ), (9/32 , -8/25 ), (-32/3 , -8/25 ), (-2/3 , -5/8 )  \}.$$
Plugging the appropriate coordinates into the corresponding $j$-maps, we see that there are exactly 2 elliptic curves up to $\Qbar$-isomorphism such that $E(\QQ(D_4^\infty))_\tor$ contains a subgroup isomorphic to $\ZZ/3\ZZ\oplus\ZZ/15\ZZ$ and they have 
\[
j(E) = -\frac{1680914269}{32768}\hbox{ or }j(E) = \frac{1331}{8}.
\]
\end{proof}

\begin{prop}\label{prop:C52TorsionClassification}
Suppose that $E/\QQ$ is an elliptic curve such that the $5$-primary component of $E(\QQ(D_4^\infty))_\tor$ is isomorphic to $\ZZ/5\ZZ$. Then the $2$-primary component of $E(\QQ(D_4^\infty))_\tor$ is either trivial or isomorphic to $\ZZ/4\ZZ \oplus \ZZ/4\ZZ$. 
\end{prop}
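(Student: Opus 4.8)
The plan is to combine the structural information already obtained separately at the primes $2$ and $5$ with a single fiber-product computation in the style of Propositions \ref{prop:13TorsionClassification}--\ref{prop:C53TorsionClassification}. Since the $5$-primary component of $E(\QQ(D_4^\infty))_\tor$ is $\ZZ/5\ZZ$, Corollary \ref{cor:Nontriv5ImplesIsog} tells us that $E$ admits a rational $5$-isogeny, so $j(E)$ lies on the genus-$0$ $j$-line of $X_0(5)$ recorded in Proposition \ref{prop:D4Classification5}. Meanwhile Proposition \ref{prop:Triv2TorOrFull4Tor} forces $E(\QQ(D_4^\infty))(2)$ to be either trivial or to contain $E[4]$, and by Proposition \ref{prop:D4Classification2} the only possibilities strictly larger than $\ZZ/4\ZZ\oplus\ZZ/4\ZZ$ are the seven groups listed there. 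Thus the whole proposition reduces to showing that none of these seven can occur simultaneously with a rational $5$-isogeny.

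The key simplification is that every one of those seven groups contains $\ZZ/4\ZZ\oplus\ZZ/8\ZZ$ as a subgroup; for instance $2(\ZZ/8\ZZ)\oplus\ZZ/8\ZZ\cong\ZZ/4\ZZ\oplus\ZZ/8\ZZ$ inside $\ZZ/8\ZZ\oplus\ZZ/8\ZZ$, and similarly in the remaining cases. Hence, by the ``if and only if'' of Proposition \ref{prop:D4Classification2}, any elliptic curve realizing one of these larger $2$-primary components yields a rational point on one of the genus-$0$ modular curves $X_{12},X_{13}$ of Table \ref{tab:RZBData}, which parametrize the $E$ with $E(\QQ(D_4^\infty))(2)\supseteq\ZZ/4\ZZ\oplus\ZZ/8\ZZ$. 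So it suffices to rule out the single combination: $E$ has a rational $5$-isogeny and $E(\QQ(D_4^\infty))(2)\supseteq\ZZ/4\ZZ\oplus\ZZ/8\ZZ$. As a sanity check, three of the seven ($\ZZ/4\ZZ\oplus\ZZ/16\ZZ$, $\ZZ/4\ZZ\oplus\ZZ/32\ZZ$, $\ZZ/8\ZZ\oplus\ZZ/32\ZZ$) can in fact be discarded at once, since Lemma \ref{lem:isogp^j-k} produces a rational $4$- or $8$-isogeny, which composed with the $5$-isogeny gives a cyclic isogeny of degree $20$ or $40$, contradicting Theorem \ref{thm:IsoTypes}; but the subgroup reduction lets a single computation dispose of all seven uniformly.

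To carry out that computation I would form the fiber product $X$ of the $X_0(5)$ $j$-line with the $j$-line of $X_{12}$ (and separately of $X_{13}$) over the $j$-line, exactly as in Propositions \ref{prop:13TorsionClassification} and \ref{prop:7TorsionClassification}. Its nonsingular, noncuspidal rational points classify precisely the curves we wish to exclude, and as elsewhere we may ignore the singular points since they correspond to $j\in\{0,1728\}$, which are handled in Section \ref{subsec:CMCurves}. After computing the genus of $X$, I would use the fractional-linear symmetries of the two $j$-maps to generate a nontrivial group $G\subseteq\Aut(X)$ and pass to the quotient $X/G$, aiming to land on a hyperelliptic or elliptic curve $H$ whose rational points are accessible; one then computes $J(\QQ)$ for the Jacobian of $H$ (or the Mordell--Weil group in the elliptic case) in Magma, pulls the finitely many points of $H(\QQ)$ back through the quotient map, and checks that every rational point of $X$ is a cusp or a CM fiber.

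The hard part will be exactly this final rational-points determination. The fiber product is likely to have fairly large genus, so the argument hinges on finding enough automorphisms to descend to a curve on which Magma can \emph{provably} compute the Mordell--Weil group, and on verifying that the pullbacks of its rational points meet $X$ only in cusps and CM fibers. This is the same bottleneck encountered in Propositions \ref{prop:Full5TorsionClassification} and \ref{prop:C53TorsionClassification}, and I expect the delicate step here to be controlling the genus of the quotient and certifying the Jacobian computation rather than anything conceptual.
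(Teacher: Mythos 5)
Your reduction of all seven larger $2$-primary structures to rational points on $X_{12}$ or $X_{13}$ is where the argument breaks. Abstract containment of torsion groups does not translate into covering relations among the RZB modular curves: the condition ``$E(\QQ(D_4^\infty))(2)$ contains $\ZZ/4\ZZ\oplus\ZZ/8\ZZ$'' is a condition on the $2$-adic image of Galois, and it corresponds to the image lying in one of \emph{four} maximal subgroup classes, namely those attached to $X_{12}$, $X_{13}$, $X_8$, and $X_{11}$ --- this is exactly what the paper asserts at the start of its proof. Indeed, by Table \ref{tab:RZBData} and Figure \ref{fig:RZB}, the structures $\ZZ/8\ZZ\oplus\ZZ/8\ZZ$, $\ZZ/8\ZZ\oplus\ZZ/16\ZZ$, and $\ZZ/16\ZZ\oplus\ZZ/16\ZZ$ arise from rational points on $X_8$ or $X_{11}$ (or on the curves $X_{25}$, $X_{58}$, $X_{193}$ lying above them), and none of these curves cover $X_{12}$ or $X_{13}$. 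A curve whose image lands in the subgroup defining $X_{11}$ (full $2$-torsion over $\QQ$) has $2$-primary component $\ZZ/8\ZZ\oplus\ZZ/8\ZZ\supseteq\ZZ/4\ZZ\oplus\ZZ/8\ZZ$ over $\QQ(D_4^\infty)$, yet it need not give a rational point on $X_{12}$ or $X_{13}$. So your single fiber-product computation, even if carried out successfully, would dispose of only two of the four cases and the conclusion would not follow. (Your preliminary steps --- the appeal to Proposition \ref{prop:Triv2TorOrFull4Tor}, and the elimination of $\ZZ/4\ZZ\oplus\ZZ/16\ZZ$, $\ZZ/4\ZZ\oplus\ZZ/32\ZZ$, $\ZZ/8\ZZ\oplus\ZZ/32\ZZ$ via Lemma \ref{lem:isogp^j-k} and Theorem \ref{thm:IsoTypes} --- are fine.)

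The paper handles precisely the two pairs. For the pair $X_{12}$, $X_{13}$: the $X_{13}$ case ($E$ has a rational $4$-isogeny) needs no computation at all, since together with the $5$-isogeny it would yield a $20$-isogeny, contradicting Theorem \ref{thm:IsoTypes}; the $X_{12}$ case (a rational point of order $2$ with $\Delta(E)\equiv\Delta(E_2)\bmod(\QQ^\times)^2$) is handled not by a fiber product of $j$-lines but by parametrizing curves with a $10$-isogeny directly, writing down both discriminants, and showing the square-class condition $s^2=(t+1)/(t^6-4t^5)$ has no relevant rational solutions. For the pair you are missing: $X_{11}$ is again eliminated by Theorem \ref{thm:IsoTypes} (full $2$-torsion plus a $5$-isogeny forces a $20$-isogeny), while $X_8$ (the condition $\Delta(E)\equiv-\Delta(E_2)\bmod(\QQ^\times)^2$) requires the separate genus-one computation with $-s^2=(t+1)/(t^6-4t^5)$. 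Your outline becomes a correct proof once you add arguments covering $X_8$ and $X_{11}$; as written, the key reduction step is false.
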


\begin{proof}
Let $E$ be an elliptic curve such that the $5$-primary component of $E(\QQ(D_4^\infty))_\tor$ is isomorphic to $\ZZ/5\ZZ$. This means that $E$ has a rational $5$-isogeny and if this is the only isogeny that $E$ has, then the 2-primary component of $E(\QQ(D_4^\infty))$ must be trivial. Inspecting Theorem \ref{thm:IsoTypes}, we see that it is possible that $E$ could also have a 2-isogeny, in which case the full 4-torsion of $E$ would be defined over $\QQ(D_4^\infty)$. All that is left to show is that there are no curves that have a 5-isogeny and two primary component of $E(\QQ(D_4^\infty))_\tor$ containing a subgroup isomorphic to $\ZZ/4\ZZ \oplus \ZZ/8\ZZ$.

According to Table \ref{tab:RZBData}, Figure \ref{fig:RZB}, and \cite{RZB} there are four ways that $E(\QQ(D_4^\infty))_\tor$ can contain a subgroup isomorphic to $\ZZ/4\ZZ\oplus \ZZ/8\ZZ$. These 4 possibilities break in to pairs. The first pair is that $E$ comes from a points on either $X_{12}$ or $X_{13}$ which can be interpreted as the following two conditions:
\begin{enumerate}
\item the curve $E$ has a point of order 2 defined over $\QQ$ and $\Delta(E) \equiv \Delta(E_2)\bmod (\QQ^\times)^2$ where $E_2$ is the elliptic curve that is 2-isogenous to $E$, or
\item the curve $E$ has a 4-isogeny. 
\end{enumerate}
Since we are already assuming that $E$ has a $5$-isogeny, $(2)$ cannot happen by Theorem \ref{thm:IsoTypes} since there are no elliptic curves over $\QQ$ with a 20-isogeny. Further, if $E$ has a 5-isogeny and satisfies condition (1) then $E$ is isomorphic to
\begin{align*}
E_t\colon y^2 + xy &= x^3 - \frac{36(t-4)(t+1)^2t^5}{(t^2-2t-4)^2(t^2-2t+2)^2(t^2+4)(t^4-2t^3-6t^2-8t-4)}x\\
&-\frac{(t-4)(t+1)^2t^5}{(t^2-2t-4)^2(t^2-2t+2)^2+(t^2+1)(t^4-2t^3-6t^2-8t-4)}.
\end{align*}
over $\QQ(D_4^\infty)$ for some $t\in\QQ$. This curve has discriminant \[\Delta(E_t) = \frac{(t - 4)t^5(t + 1)^2(t^6 - 4t^5 + 16t + 16)^6}{(t^2 - 2t - 4)^6(t^2 - 2t + 2)^6(t^2 + 4)^3(t^4 - 2t^3 - 6t^2 - 8t - 4)^6}\] 
and its 2-isogenous curve has discriminant \[\Delta(E_{t,2}) = \frac{(t - 4)^2t^{10}(t + 1)(t^6 - 4t^5 + 16t + 16)^6}{(t^2 - 2t - 4)^6(t^2 - 2t + 2)^6(t^2 + 4)^3(t^4 - 2t^3 - 6t^2 - 8t - 4)^6}.\]
These two curves have discriminants in the same square class exactly when $(t + 1)/(t^6 - 4t^5)$ is a square. Considering the curve 
\[C\colon s^2 = (t + 1)/(t^6 - 4t^5)\]
in Magma we show that $C$ has genus 1 and exactly 1 rational point that is singular, $P = (-1,0)$. Since the curve $E_0$ is singular here we have that there are no elliptic curves with a 10-isogeny whose 2-isogenous curve has the same discriminant mod squares, i.e. there are no curves with a point of order 5 over $\QQ(D_4^\infty)$ that satisfy condition (1). 

Lastly from Table \ref{tab:RZBData}, Figure \ref{fig:RZB}, and \cite{RZB} there are two ways that $E(\QQ(D_4^\infty))_\tor$ could also correspond to a rational point on $X_8$ or $X_{11}$. These two possibilities correspond to the following two conditions:
\begin{enumerate}
\item The curve $E$ has a point of order 2 defined over $\QQ$ and $\Delta(E) \equiv -\Delta(E_2)\bmod (\QQ^\times)^2$ where $E_2$ is the elliptic curve that is 2-isogenous to $E$.
\item The curve $E$ its full 2-torsion defined over $\QQ$. 
\end{enumerate}
If $E$ has a 5-isogeny, then $E$ cannot have its full 2-torsion defined over $\QQ$ as this would imply the existence of an elliptic curve with a 20-isogeny. So we can rule out condition (2). To show that there are no elliptic curves with a 5-isogeny that satisfy condition (1), we proceed the same way as before, but this time we consider the curve 
\[C'\colon -s^2 = (t + 1)/(t^6 - 4t^5).\]
The curve $C'$ is also genus 1 and singular and it has only one rationla points in the given affine patch which corresponds to a singluar curve. Thus condition (2) cannot occur for an elliptic curve with a 5-isogeny, completing the proof of the proposition. 

\end{proof}

\begin{prop}
Suppose that $E/\QQ$ is an elliptic curve such that the $5$-primary component of $E(\QQ(D_4^\infty))_\tor$ is isomorphic to $\ZZ/5\ZZ$. Then $E(\QQ(D_4^\infty))_\tor\simeq \ZZ/5\ZZ,\ZZ/15\ZZ$, $\ZZ/3\ZZ\oplus\ZZ/15\ZZ$, or  $\ZZ/4\oplus\ZZ/20\ZZ$.
\end{prop}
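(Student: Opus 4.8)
The plan is to combine the $p$-primary classifications already obtained with a short isogeny-degree argument, reducing everything to deciding which pairs of $2$- and $3$-primary components can coexist. First I would record that, since the $5$-primary component is $\ZZ/5\ZZ$, Corollary \ref{cor:Nontriv5ImplesIsog} furnishes $E$ with a rational $5$-isogeny. If $7$ (resp.\ $13$) divided $\#E(\QQ(D_4^\infty))_\tor$, then by Propositions \ref{prop:D4Classification7} and \ref{prop:D4Classification13} the curve would also admit a rational $7$-isogeny (resp.\ $13$-isogeny); combining this with the $5$-isogeny produces a rational $35$-isogeny (resp.\ $65$-isogeny), neither of which is permitted by Theorem \ref{thm:IsoTypes}. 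By Proposition \ref{prop:FinitePrimesD4} no other primes intervene, so only $2$, $3$, and $5$ can divide the order of the torsion subgroup and
$$E(\QQ(D_4^\infty))_\tor \simeq \ZZ/5\ZZ \oplus E(\QQ(D_4^\infty))(2) \oplus E(\QQ(D_4^\infty))(3).$$

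Next I would invoke Propositions \ref{prop:C52TorsionClassification} and \ref{prop:C53TorsionClassification}, which under the present hypothesis restrict $E(\QQ(D_4^\infty))(2)$ to being trivial or $\ZZ/4\ZZ\oplus\ZZ/4\ZZ$ and $E(\QQ(D_4^\infty))(3)$ to being trivial, $\ZZ/3\ZZ$, or $\ZZ/3\ZZ\oplus\ZZ/3\ZZ$. This leaves six combinations, the four with at least one of the two components trivial giving exactly the groups $\ZZ/5\ZZ$, $\ZZ/15\ZZ$, $\ZZ/3\ZZ\oplus\ZZ/15\ZZ$, and $\ZZ/4\ZZ\oplus\ZZ/20\ZZ$. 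It therefore remains only to rule out the two combinations in which both the $2$- and $3$-primary components are nontrivial. Here I would use that, by Proposition \ref{prop:Triv2TorOrFull4Tor}, a nontrivial $2$-primary component forces $E$ to have a rational point of order $2$, hence a rational $2$-isogeny.

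If in addition the $3$-primary component equals $\ZZ/3\ZZ$, then Corollary \ref{cor:IsogFreeTorsion} gives $E$ a rational $3$-isogeny, and combining the independent $2$-, $3$-, and $5$-isogenies yields a rational $30$-isogeny, impossible by Theorem \ref{thm:IsoTypes}. The remaining case, in which the $3$-primary component is the full $\ZZ/3\ZZ\oplus\ZZ/3\ZZ$, is the one genuine obstacle: full $3$-torsion over $\QQ(D_4^\infty)$ need not arise from a rational $3$-isogeny (cf.\ \href{http://www.lmfdb.org/EllipticCurve/Q/338e1}{\texttt{338e1}}), so the isogeny count above no longer applies. I would circumvent this by noting that the proof of Proposition \ref{prop:C53TorsionClassification} already shows that, among curves with $5$-primary component $\ZZ/5\ZZ$, full $3$-torsion is realized over $\QQ(D_4^\infty)$ by exactly two $\Qbar$-isomorphism classes, those with $j(E) = -1680914269/32768$ and $j(E)=1331/8$. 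Both are nonzero, so Proposition \ref{prop:twist} permits me to test a single representative of each; a direct check that neither has a rational $2$-isogeny (equivalently, no rational point of order $2$) shows via Proposition \ref{prop:Triv2TorOrFull4Tor} that their $2$-primary components are trivial. This eliminates the last combination and leaves precisely the four groups in the statement.
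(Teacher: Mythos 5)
Your proof is correct and follows essentially the same route as the paper's: reduce via Propositions \ref{prop:C52TorsionClassification} and \ref{prop:C53TorsionClassification} to excluding the two cases where both the $2$- and $3$-primary components are nontrivial, kill the $\ZZ/3\ZZ$ case with a $30$-isogeny contradiction against Theorem \ref{thm:IsoTypes}, and handle the $\ZZ/3\ZZ\oplus\ZZ/3\ZZ$ case by checking representatives of the two $j$-invariants $-1680914269/32768$ and $1331/8$ supplied by Proposition \ref{prop:C53TorsionClassification} together with Proposition \ref{prop:twist}. The only differences are cosmetic: you make the exclusion of $7$ and $13$ explicit (the paper leaves it to the earlier propositions), and your final computation verifies merely the absence of a rational $2$-torsion point rather than the full torsion subgroup, which suffices by Proposition \ref{prop:Triv2TorOrFull4Tor}.
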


\begin{proof}
In order to prove this proposition we simply need to show that if the 5-primary component of $E(\QQ(D_4^\infty))_\tor$ is $\ZZ/5\ZZ$ and the 3-primary component of $E(\QQ(D_4^\infty))_\tor$ is nontrivial, then the 2-primary component of $E(\QQ(D_4^\infty))_\tor$ is trivial. 

First, suppose that the 3-primary component of $E(\QQ(D_4^\infty))_\tor$ is nontrivial but $E[3]\not\subseteq E(\QQ(D_4^\infty))$. Then it must be that $E$ has a 15-isogeny and by Theorem \ref{thm:IsoTypes} this means that $E$ cannot also have a 2-isogeny and the 2-primary component of $E(\QQ(D_4^\infty))_\tor$ is trivial. 

Lastly, suppose that the 3-primary component of $E(\QQ(D_4^\infty))_\tor$ is isomorphic to $\ZZ/3\ZZ \oplus \ZZ/3\ZZ$. From Proposition \ref{prop:C53TorsionClassification}, either $j(E) = -\frac{1680914269}{32768}$ or $j(E) = \frac{1331}{8}$. Using Proposition \ref{prop:twist} it is enough to check one curve with each $j$-invariant. Doing so in both cases shows that both curves have torsion subgroup $\ZZ/3\ZZ\oplus\ZZ/15\ZZ$ over $\QQ(D_4^\infty)$. 
\end{proof}

\subsection{The cases when $3$ divides \texorpdfstring{$\#E(\QQ(D_4^\infty))_\tor$}{E(D4)}}

The last thing that we need to determine is what are the possible $2$-primary components of $E(\QQ(D_4^\infty))_\tor$ give that the $3$-primary component is nontrivial. This breaks into $3$ different cases that correspond to the $3$ possible nontrivial $3$-primary components listed in Proposition \ref{prop:D4Classification3}.

\begin{prop}
Suppose that $E/\QQ$ is an elliptic curve such that the $3$-primary component of $E(\QQ(D_4^\infty))_\tor$ is isomorphic to $\ZZ/3\ZZ$. Then the $2$-primary component of $E(\QQ(D_4^\infty))_\tor$ is either trivial or isomorphic to either $\ZZ/4\ZZ \oplus \ZZ/4\ZZ$, $\ZZ/4\ZZ \oplus \ZZ/8\ZZ$, or $\ZZ/8\ZZ \oplus \ZZ/8\ZZ$. 
\end{prop}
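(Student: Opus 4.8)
The plan is to combine the isogeny constraints already in hand with the $2$-primary classification of Proposition~\ref{prop:D4Classification2}, and then to eliminate the surviving possibilities with a fiber-product computation of the kind carried out in Propositions~\ref{prop:13TorsionClassification}--\ref{prop:C52TorsionClassification}. First I would record the starting point: since $E$ is assumed without CM, Proposition~\ref{prop:D4Classification3} together with Corollary~\ref{cor:IsogFreeTorsion} shows that the hypothesis $E(\QQ(D_4^\infty))(3)\simeq \ZZ/3\ZZ$ forces $E$ to admit a rational $3$-isogeny. By Proposition~\ref{prop:D4Classification2} the component $E(\QQ(D_4^\infty))(2)$ is trivial or one of the eight groups listed there, and the three groups in the conclusion (together with the trivial group) are exactly the ones of exponent at most $8$. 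Thus the proposition is equivalent to the assertion that, under the hypothesis, $E(\QQ(D_4^\infty))$ has no point of order $16$; that is, $E$ does not arise from a rational point on any of $X_{25},X_{36},X_{58},X_{92},X_{193},X_{235}$ from Table~\ref{tab:RZBData}.

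Next I would invoke the covering relations of Figure~\ref{fig:RZB} to cut this down to the two minimal order-$16$ curves. Since $X_{58}$ and $X_{193}$ cover $X_{25}$ while $X_{92}$ and $X_{235}$ cover $X_{36}$, a rational point on any of these five would map to a rational point on $X_{25}$ or $X_{36}$; hence it suffices to rule out $X_{25}$ (type $\ZZ/8\ZZ\oplus\ZZ/16\ZZ$) and $X_{36}$ (type $\ZZ/4\ZZ\oplus\ZZ/16\ZZ$) for a curve with a rational $3$-isogeny. As a consistency check one of these exclusions is free: the type $\ZZ/4\ZZ\oplus\ZZ/32\ZZ$ of $X_{235}$ forces a rational $8$-isogeny by Lemma~\ref{lem:isogp^j-k}, which together with the $3$-isogeny would give a rational $24$-isogeny, impossible by Theorem~\ref{thm:IsoTypes}. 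The two minimal curves themselves cannot be excluded this way, since $X_{36}$ and $X_{25}$ only require a $4$-isogeny and a $2$-isogeny respectively, and both a $12$-isogeny and a $6$-isogeny are permitted by Theorem~\ref{thm:IsoTypes}.

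For each of $X_{25}$ and $X_{36}$ I would then form the fiber product of its (genus $0$) $j$-map with the $3$-isogeny $j$-map
\[
j_3(t)=27\frac{(t+1)(t+9)^3}{t^3}
\]
of Proposition~\ref{prop:D4Classification3}, obtaining a curve $X$ whose noncuspidal, nonsingular rational points correspond exactly to the elliptic curves to be excluded; the singular points again only account for $j=0$ and $j=1728$, which were handled in Section~\ref{subsec:CMCurves}. After computing the genus of $X$ in Magma, I would exploit the fractional-linear symmetries of the two $j$-maps to produce automorphisms of $X$, pass to a quotient to reduce the genus, and then determine the rational points of the quotient by computing the Mordell--Weil group of its Jacobian and pulling points back through the quotient map, exactly as in Propositions~\ref{prop:13TorsionClassification} and~\ref{prop:7TorsionClassification}; alternatively, as in Proposition~\ref{prop:C52TorsionClassification}, one can replace the point-of-order-$16$ condition by the explicit discriminant congruences attached to the relevant RZB curves and reduce instead to lower-genus curves $s^2=g(t)$. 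The expected outcome is that the only rational points are cuspidal or singular, giving the claim.

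The main obstacle is this last step: the fiber products are likely to have fairly large genus, so a naive point search is hopeless, and the argument hinges on finding enough automorphisms (coming from the symmetries of $j_3$ and of the $X_{25}$, $X_{36}$ maps) to descend to a curve of genus $\le 2$ or to an elliptic curve of finite, computable Mordell--Weil rank, on which all rational points can be \emph{provably} enumerated. Verifying that every such point pulls back only to cusps or to the CM loci $j=0,1728$, and that no sporadic noncuspidal rational point survives on $X$, is where the real care is required.
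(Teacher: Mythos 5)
Your opening reductions are correct and match the first half of the paper's proof: the hypothesis forces a rational $3$-isogeny, Proposition \ref{prop:D4Classification2} reduces the claim to showing that $E(\QQ(D_4^\infty))$ has no point of order $16$, and the covering relations in Figure \ref{fig:RZB} cut this down to ruling out rational points on $X_{25}$ and $X_{36}$. The trouble is where you go next. Your reason for abandoning the isogeny argument --- that ``$X_{36}$ and $X_{25}$ only require a $4$-isogeny and a $2$-isogeny respectively'' --- is an underestimate: that is what Lemma \ref{lem:isogp^j-k} alone extracts from the abstract group structures $\ZZ/4\ZZ\oplus\ZZ/16\ZZ$ and $\ZZ/8\ZZ\oplus\ZZ/16\ZZ$, but the moduli interpretations of the actual curves in \cite{RZB} are finer, and this finer information is exactly what the paper uses. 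By the RZB data, a point of order $16$ over $\QQ(D_4^\infty)$ forces $E$ to have either an $8$-isogeny or a $4$-isogeny together with an independent $2$-isogeny (for instance, the $j$-map for $X_{36}$ in Table \ref{tab:jMaps} is precisely the classical $j$-map of $X_0(8)$, so those curves carry an $8$-isogeny, not merely a $4$-isogeny). In either case, combining with the rational $3$-isogeny produces a curve in the isogeny class of $E$ admitting a rational $24$-isogeny: directly for $E$ itself when there is an $8$-isogeny, and for the quotient of $E$ by the independent $2$-isogeny kernel in the other case, since the composite of the dual $2$-isogeny with the $4$-isogeny is cyclic of degree $8$ and the $3$-isogeny descends to the quotient. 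This contradicts Theorem \ref{thm:IsoTypes}, and the proof ends there; the route you dismissed as impossible is the paper's entire argument, with no curve computations at all.

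Because you turned away from that route, your proposal instead rests on determining the rational points of the fiber products of $X_{25}$ and $X_{36}$ with $X_0(3)$, and this is where the genuine gap lies: you never carry out those computations. You describe constructing the fiber products, finding automorphisms from the symmetries of the $j$-maps, quotienting to lower genus, and computing Mordell--Weil groups of Jacobians, but the conclusion is only that the ``expected outcome'' is that all rational points are cuspidal or singular, and you yourself flag this step as the place ``where the real care is required.'' The plan is plausible in principle --- it mirrors the technique the paper uses in Propositions \ref{prop:13TorsionClassification}, \ref{prop:7TorsionClassification}, and \ref{prop:C52TorsionClassification} --- but a proof requires actually producing the quotient curves, provably enumerating their rational points, and checking that the pullbacks to the fiber products contain no noncuspidal, nonsingular points. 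Until that is done, the proposition is not established by your argument.
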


\begin{proof}
From Lemma \ref{lem:isogp^j-k} that $E$ must have a $3$-isogeny and since we have examples of the three torsion structures listed above, all that is left to do is rule out the case where the 2-primary component of $E(\QQ(D_4^\infty))$ contains a point of order 16. Looking at Table \ref{tab:RZBData} and the data in \cite{RZB} we see that in order for $E$ to have a point of order 16 defined over $\QQ(D_4^\infty)$ it must be that either $E$ has an 8-isogeny or it must have a 4-isogeny and an independent 2-isogeny. In both of these cases, since $E$ also has a 3-isogeny there would have to exist an elliptic curve (either $E$ itself or another elliptic curve in its isogeny class) that has a 24-isogeny. Once again thanks to Theorem \ref{thm:IsoTypes} we have that this can't happen, and thus if the 3-primary component of $E(\QQ(D_4^\infty))_\tor$ is isomorphic to $\ZZ/3\ZZ$, then $E$ cannot have a point of order 16 defined over $\QQ(D_4^\infty)$. 
\end{proof}

\begin{prop}
Suppose that $E/\QQ$ is an elliptic curve such that the $3$-primary component of $E(\QQ(D_4^\infty))_\tor$ is isomorphic to $\ZZ/9\ZZ$. Then the $2$-primary component of $E(\QQ(D_4^\infty))_\tor$ is trivial. 
\end{prop}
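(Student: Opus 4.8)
The plan is to derive a contradiction with Mazur's theorem (Theorem~\ref{thm:mazur}) by manufacturing a rational point of order $18$ on a suitable quadratic twist of $E$. So suppose, toward a contradiction, that the $2$-primary component of $E(\QQ(D_4^\infty))_\tor$ is nontrivial, and translate each hypothesis into a statement about points defined over small fields.

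First I would use Proposition~\ref{prop:Triv2TorOrFull4Tor}: a nontrivial $2$-primary component forces $E(\QQ(D_4^\infty))[2]$ to be nontrivial, and by the analysis in its proof this happens exactly when $E$ has a point of order $2$ defined over $\QQ$. On the other side, since the $3$-primary component is $\ZZ/9\ZZ$, the argument in the proof of Proposition~\ref{prop:D4Classification3} shows that the generating point of order $9$ is defined over an at most quadratic field $\QQ(\sqrt{d})$, and hence that the quadratic twist $E^{(d)}$ has a \emph{rational} point of order $9$ (with $d=1$ in the case the point is already rational).

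Next I would transport both pieces of structure to the single curve $E^{(d)}$. The key elementary observation is that quadratic twisting by $d$ merely scales the $x$-coordinates of the $2$-torsion by $d$, so it does not change the field of definition of any $2$-torsion point; in particular $E^{(d)}$ still carries a rational point of order $2$. Therefore $E^{(d)}(\QQ)_\tor$ contains both a point of order $9$ and a point of order $2$, and since $\gcd(2,9)=1$ it contains a point of order $18$, i.e. $\ZZ/18\ZZ \hookrightarrow E^{(d)}(\QQ)_\tor$. This is forbidden by Mazur's theorem, giving the desired contradiction and forcing the $2$-primary component to be trivial.

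I do not expect a genuine obstacle here; unlike the classification Propositions~\ref{prop:13TorsionClassification}--\ref{prop:Full5TorsionClassification}, no fiber-product modular curve is needed. The only point requiring care is the bookkeeping that the twist which rationalizes the order-$9$ point is the \emph{same} twist on which the order-$2$ point remains rational, and this is immediate from twist-invariance of $2$-torsion rationality. It is worth remarking why this argument is special to the $\ZZ/9\ZZ$ case: applying the same recipe when the $3$-primary component is only $\ZZ/3\ZZ$ produces a point of order $6$, which Mazur permits, so there the $2$-primary component indeed need not vanish.
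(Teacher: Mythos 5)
Your proposal is correct and follows essentially the same route as the paper: both arguments pass to the quadratic twist on which the order-$9$ point becomes rational, observe that the rational $2$-torsion point survives the twist (the paper does this implicitly via twist-invariance of $E(\QQ(D_4^\infty))_\tor$, you do it explicitly via the $2$-torsion $x$-coordinates), and then contradict Mazur's theorem with a rational point of order $18$. No substantive difference to report.
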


\begin{proof}
Recall from Proposition \ref{prop:D4Classification3} that $E$ has a point of order $9$-defined over $\QQ(D_4^\infty)$ if and only if $E$ has a quadratic twist with a point of order 9 defined over $\QQ$. Since every quadratic extension of $\QQ$ is contained inside of $\QQ(D_4^\infty)$, we can assume that we are working with a model of $E/\QQ$ such that $E$ has a point of order 9 defined over $\QQ$ itself. The only way that the 2-primary component of $E(\QQ(D_4^\infty))_\tor$ is nontrivial is for $E$ to have a rational point of order 2. Thus, in order for $E(\QQ(D_4^\infty))_\tor$ to have a point of order 9 \emph{and} nontrivial 2-primary component, it would have to be that $E$ has a rational point of order 18, but by Theorem \ref{thm:mazur} this cannot happen. 
\end{proof}

\begin{prop}
Suppose that $E/\QQ$ is an elliptic curve such that the $3$-primary component of $E(\QQ(D_4^\infty))_\tor$ is isomorphic to $\ZZ/3\ZZ \oplus \ZZ/3\ZZ$. Then the $2$-primary component of $E(\QQ(D_4^\infty))_\tor$ is either trivial or isomorphic to either $\ZZ/4\ZZ \oplus \ZZ/4\ZZ$, or $\ZZ/4\ZZ \oplus \ZZ/8\ZZ$. 
\end{prop}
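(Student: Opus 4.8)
The plan is to combine Proposition \ref{prop:D4Classification2}, which lists the eight possible nontrivial $2$-primary components, with the covering relations recorded in Table \ref{tab:RZBData} and Figure \ref{fig:RZB}. Each of the six forbidden components either is, or (set-theoretically, hence via the covering maps) dominates, one of $X_8,X_{11}$ (parametrizing $\ZZ/8\ZZ\oplus\ZZ/8\ZZ$) or $X_{36}$ (parametrizing $\ZZ/4\ZZ\oplus\ZZ/16\ZZ$): indeed $\ZZ/8\ZZ\oplus\ZZ/16\ZZ$, $\ZZ/8\ZZ\oplus\ZZ/32\ZZ$, $\ZZ/16\ZZ\oplus\ZZ/16\ZZ$ all contain $\ZZ/8\ZZ\oplus\ZZ/8\ZZ$, while $\ZZ/4\ZZ\oplus\ZZ/32\ZZ$ contains $\ZZ/4\ZZ\oplus\ZZ/16\ZZ$. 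So it suffices, under the hypothesis $E(\QQ(D_4^\infty))(3)\simeq\ZZ/3\ZZ\oplus\ZZ/3\ZZ$, to rule out (A) $\ZZ/8\ZZ\oplus\ZZ/8\ZZ\subseteq E(\QQ(D_4^\infty))(2)$ and (B) a point of order $16$ in $E(\QQ(D_4^\infty))(2)$.

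For (B) I would first reuse the isogeny bookkeeping from the proof of the $\ZZ/3\ZZ$ case: reading off \cite{RZB}, a point of order $16$ over $\QQ(D_4^\infty)$ forces $E$ to have either an $8$-isogeny or a $4$-isogeny together with an independent $2$-isogeny. By Proposition \ref{prop:D4Classification3}, full $3$-torsion over $\QQ(D_4^\infty)$ is cut out by the single genus-$0$ map $j_3(t)=27(t+1)^3(t-3)^3/t^3$, and it splits into two loci according to whether $\Im\bar\rho_{E,3}$ lies in the split Cartan subgroup (so that $E$ has a rational $3$-isogeny) or is the full normalizer of the split Cartan (so that $\Gal(\QQ(E[3])/\QQ)$ is dihedral of order $8$ and $E$ has \emph{no} rational $3$-isogeny). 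On the first locus, exactly as in the $\ZZ/3\ZZ$ case, the $3$-isogeny combines with the $2$-power isogeny to produce a curve in the isogeny class of $E$ admitting a $24$-isogeny, which contradicts Theorem \ref{thm:IsoTypes}; this disposes of (B) whenever $E$ has a rational $3$-isogeny.

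The genuinely harder configurations are (A) and the no-$3$-isogeny locus. Case (A) is invisible to Theorem \ref{thm:IsoTypes}: by Lemma \ref{lem:isogp^j-k} the group $\ZZ/8\ZZ\oplus\ZZ/8\ZZ$ forces no $2$-power isogeny, and the description in the proof of Proposition \ref{prop:C52TorsionClassification} only shows it corresponds to $E$ having full rational $2$-torsion or a rational $2$-torsion point with $\Delta(E)\equiv-\Delta(E_2)\pmod{(\QQ^\times)^2}$—neither of which, combined even with the $3$-isogeny, yields a forbidden composite. For these remaining cases I would proceed as in Propositions \ref{prop:13TorsionClassification}, \ref{prop:7TorsionClassification}, and \ref{prop:Full5TorsionClassification}: form the fiber products of $j_3$ with the $j$-maps of $X_8$, $X_{11}$, and $X_{36}$ taken from \cite{RZB}, whose noncuspidal rational points are exactly the configurations to be excluded, then exploit the symmetry $j_3(-3/t)=j_3(t)$ together with the symmetries of the $2$-adic maps to quotient these curves down to low genus, compute the Mordell--Weil group of the relevant Jacobian in Magma, and pull the rational points back through the quotient maps to conclude that only cusps or the points with $j\in\{0,1728\}$ survive (the latter handled in Section \ref{subsec:CMCurves}). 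A variant, closer to Propositions \ref{prop:D4Classification5} and \ref{prop:D4Classification3}, is to enumerate the subgroups of $\GL_2(\ZZ/16\ZZ)\times\GL_2(\ZZ/3\ZZ)$ that are of generalized $D_4$-type, have surjective determinant and an element of trace $0$ and determinant $-1$, and realize (A) or (B) with full $3$-torsion, and to check that each maximal such group forces a point of order $16$ over a cyclic quartic field, which is impossible by \cite[Theorem 1.2]{chou1}.

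The main obstacle is precisely the no-$3$-isogeny locus (and all of (A)): there the isogeny arguments are unavailable, so the exclusion must rest on the modular-curve or image-enumeration computation, and the real work is in finding enough automorphisms of the fiber-product curves to bring their genus down to where a rank-zero (or otherwise explicitly computable) Jacobian makes the Mordell--Weil pullback effective. Finally, to confirm that the bound is sharp I would exhibit \href{http://www.lmfdb.org/EllipticCurve/Q/14a1}{\texttt{14a1}} and \href{http://www.lmfdb.org/EllipticCurve/Q/256a1}{\texttt{256a1}}, realizing $\ZZ/4\ZZ\oplus\ZZ/4\ZZ$ and $\ZZ/4\ZZ\oplus\ZZ/8\ZZ$ respectively, so that together with the trivial case these are exactly the surviving $2$-primary components.
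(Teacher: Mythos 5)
Your proposal is correct and takes essentially the same approach as the paper: reduce to ruling out full $8$-torsion and a point of order $16$ alongside full $3$-torsion, then eliminate these configurations via fiber products of the relevant $j$-maps, automorphism quotients, and rank-zero Jacobian / Mordell--Weil pullback computations in Magma (the $X_8$ and $X_{11}$ fiber products turn out to be singular genus-$1$ curves containing only singular rational points, and the order-$16$ case reduces, via the $8$-isogeny condition from the Rouse--Zureick-Brown data, to a genus-$3$ hyperelliptic fiber product that quotients to $y^2 = x^6+1$, whose Jacobian has rank $0$). The differences are minor: the paper handles the order-$16$ case uniformly through the $8$-isogeny fiber product rather than splitting on whether $E$ has a rational $3$-isogeny, so your $24$-isogeny shortcut, while valid, is subsumed by the single computation, and your group-enumeration variant is offered only as an aside.
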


\begin{proof}
It is sufficient to show that if $E$ has its full 3-torsion defined over $\QQ(D_4^\infty)$ then it can't have its full 8-torsion or a point of order 16 defined over $\QQ(D_4^\infty)$. 

Assume that $E$ is an elliptic curve such that $E$ has its full 3- and full 8-torsion defined over $\QQ(D_4^\infty)$. From Proposition \ref{prop:D4Classification3} and Table \ref{tab:RZBData} there are two ways in which this can happen. These two possibilities correspond to two different modular curves that can be again constructed setting the two relevant $j$-maps equal to each other. These curves both end up begin singular curves with genus 1 and we can use Magma to show that they both only contain singular points which can only correspond to the curve with $j=0$ or $j=1728$. Thus giving us a contradiction. 

Next, assume that $E$ is an elliptic curve with its full 3-torsion and a point of order 16 defined over $\QQ(D_4^\infty)$. Since we have shown that $E$ cannot have its full 8-torsion defined over $\QQ(D_4^\infty)$ the 2-primary component of $E(\QQ(D_4^\infty))_\tor$ must be either $\ZZ/4\ZZ\oplus\ZZ/16\ZZ$ or $\ZZ/4\ZZ\oplus\ZZ/32\ZZ$. Again, using the data from \cite{RZB} we see that in order for this to be the case $E$ must have an 8-isogeny. Constructing the modular curve $X$ parametrizing elliptic curves with an 8-isogeny and full 3-torsion over $\QQ(D_4^\infty)$, we get a genus 3 singular curve. It turns out that $X$ is a hyperelliptic curve whose jacobian has rank 0, but since much of Magma's functionality is only implemented for genus 2 hyperelliptic curves, we take the extra step to quotient out by a subgroup of $\Aut(X)$ generated by a single automorphism of order 2. The resulting quotient curve $H$ is hyperelliptic, genus 2 and given by the equation $H\colon y^2 = x^6+1$. The jacobian of $H$ again has rank 0 over $\QQ$ and finding all the points on $H$ in Magma and pulling them back to $X$, we se that $X(\QQ)$ only contains singular points and points where both $j$-maps are undefined. 
\end{proof}

\subsection{When only 2 divides \texorpdfstring{$\#E(\QQ(D_4^\infty))_\tor$}{E(D4)}.}

When $\#E(\QQ(D_4^\infty))_\tor$ is a nontrivial power of 2, we know from Table \ref{tab:RZBData} that 
\[E(\QQ(D_4^\infty)_\tor \simeq \begin{cases}
 \ZZ/4\ZZ\oplus\ZZ/2^j\ZZ & j= 2,3,4,5,\hbox{ or}\\
 \ZZ/8\ZZ\oplus\ZZ/2^j\ZZ & j= 3,4,5,\hbox{ or}\\
 \ZZ/16\ZZ\oplus\ZZ/16\ZZ. &\\
\end{cases}\]
Further, the examples listed in Table \ref{tab:Examples} show that each of these possibilities does occur. 

\section{Parameterizations for each possible torsion structure.}\label{sec:Tparam}
Since the torsion structure of $E/\QQ$ base-extended to $\QQ(D_4^\infty)$ only depends on the $j$-invariant of $E$ (unless $j(E) = 0$) to completely parametrize when each torsion structure occurs, it is sufficient to give explicit descriptions of the sets 
\[ 
S_T = \{ j(E) : E(\QQ(D_4^\infty))_\tor \simeq T \},
\]
where $T$ ranges over the 24 possible torsion structures determined in Section \ref{sec:TorsionStructures} and listed in Table \ref{tab:Examples}. From Proposition \ref{prop:twist} if $j(E) \neq 0$, then $j(E)$ is in exactly one set $S_T$.  Further, Lemma \ref{lem:j=0} shows $j(E) = 0$ is in 3 different sets $S_T$ depending which rational model is chosen; namely it is in the sets $S_T$ for $T = \ZZ/3\ZZ$, $\ZZ/3\ZZ\oplus\ZZ/3\ZZ$, and $\ZZ/8\ZZ \oplus\ZZ/24\ZZ$. 

We will describe each set $S_T$ by providing sets $F_T$ of (possibly constant) rational functions $j(t)$ which parameterize the $j$-invariants $j(E)$ of elliptic curves $E/\QQ$ for which $E(\QQ(D_4^\infty))$ contains a subgroup isomorphic to $T$. In order to ease notation, we will let $\mathcal{T}$ be the set of the 24 possible torsion structures for $E(\QQ(D_4^\infty))$ and we will put a partial order on $\mathcal{T}$ given by $T_1\leq T_2$ exactly when $T_2$ has a subgroup isomorphic to $T_1$. Further, for a fixed elliptic curve $E/\QQ$ with $j(E) \neq 0$ we let $\mathcal{T}(E) \subseteq \mathcal{T}$ be the set of groups $T$ for which $j(E)$ is in the image of some function in $F_T$. 

\begin{table}[h!]
\begin{center}
\renewcommand{\arraystretch}{1.6}
\begin{tabular}{l|l}
$T$&$j(t)$\\\hline
$\{\mathcal{O}\} $ &  $t$   \\
$\ZZ/3\ZZ $ &  $\frac{27(t+1)(t+9)^3 }{t^3}$   \\
$\ZZ/5\ZZ $ & $\frac{5^2(t^2+10t+5)^3}{t^5}$    \\
$\ZZ/7\ZZ $ &   $\frac{(t^2-t+1)^3(t^3-8t^2+5t+1)(t^6-11t^5+30t^4-15t^3-10t^2+5t+1)^3}{(t-1)^7t^6}$  \\
$\ZZ/9\ZZ $ & $\frac{(t^3 - 3t^2 + 1)^3  (t^9 - 9t^8 + 27t^7 - 48t^6 + 54t^5 - 45t^4 + 27t^3 - 9t^2 + 1)^3}{(t-1)^9t^9(t^2-t+1)^2(t^3-6t^2+3t+1) }$    \\
$\ZZ/13\ZZ $ &  $\frac{(t^2-t+1)^3( t^{12}-9t^{11}+29t^{10}-40t^9+22t^8-16t^7+40t^6-22t^5-23t^4+25t^3-4t^2-3t+1)^3}{(t-1)^{13}t^{13}(t^3-4t^2+t+1)}$   \\
$\ZZ/15\ZZ $ &  $\{-\frac{25}{2}, -\frac{349938025}{8}, -\frac{121945}{32}, \frac{46969655}{32768}\}$   \\
$\ZZ/3\ZZ \oplus \ZZ/3\ZZ $ &  $\frac{27(t+1)^3(t-3)^3 }{t^3}$   \\
$\ZZ/3\ZZ \oplus \ZZ/15\ZZ $ &   $\{ -\frac{1680914269}{32768}, \frac{1331}{8}\}$  \\
$\ZZ/4\ZZ \oplus \ZZ/4\ZZ $ & $\frac{t^3}{t + 16}$    \\
$\ZZ/4\ZZ \oplus \ZZ/8\ZZ $ & $\frac{(t^2 + 16)^3}{t^2}$, $\frac{(t^2 - 48)^3}{(t - 8)(t + 8)}$    \\
$\ZZ/4\ZZ \oplus \ZZ/12\ZZ $ &  $\frac{(t+6)^3(t^3+18t^2 + 84t +23)^2}{t(t+8)^3(t+9)^2}$   \\
$\ZZ/4\ZZ \oplus \ZZ/16\ZZ $ & $\frac{(t^4 - 16t^2 + 16)^3}{(t - 4)t^2(t + 4)}$    \\
$\ZZ/4\ZZ \oplus \ZZ/20\ZZ $ &  $\frac{(t^6-4t^5+16t+16)^3}{(t+1)^2(t-4)t^5}$   \\
$\ZZ/4\ZZ \oplus \ZZ/24\ZZ $ &  $\frac{(t^2-3)^3(t^6-9t^4+3t^2-3)^3}{t^4(t^2-9)(t^2-1)^3}$   \\
$\ZZ/4\ZZ \oplus \ZZ/32\ZZ $ & $\frac{(t^{16} - 8t^{14} + 12t^{12} + 8t^{10} - 10t^8 + 8t^6 + 12t^4 - 8t^2 + 1)^3}{(t-1)^4  t^{16} (t+1)^4  (t^2 - 2t - 1)  (t^2+1)^2  (t^2 + 2t - 1)}$ \\
$\ZZ/5\ZZ \oplus \ZZ/5\ZZ $ &   $\frac{(t^2+5t+5)^3(t^4+5t^2+25)^3(t^4+5t^3+20t^2+25t+25)^3}{t^5(t^4+5t^3+15t^2+25t+25)^5}$  \\
$\ZZ/8\ZZ \oplus \ZZ/8\ZZ $ &  $\frac{(t^2 + 3)^3}{(t - 1)^2(t + 1)^2}$, $\frac{(t - 4)^3(t + 4)^3}{t^2}$   \\
$\ZZ/8\ZZ \oplus \ZZ/16\ZZ $ & $\frac{(t^4 - 8t^3 + 2t^2 + 8t + 1)^3(t^4 + 8t^3 + 2t^2 - 8t + 1)^3}{(t - 1)^2t^2(t + 1)^2(t^2 + 1)^8}$    \\
$\ZZ/8\ZZ \oplus \ZZ/24\ZZ $ & $\frac{(t^2 - 6t + 21)^3(t^6 - 18t^5 + 75t^4 + 180t^3 - 825t^2 - 2178t + 6861)^3}{(t - 9)^2(t - 5)^6(t - 3)^2(t - 1)^6(t + 3)^2}$    \\
$\ZZ/8\ZZ \oplus \ZZ/32\ZZ $ &  $\frac{(65536t^{16} - 131072t^{14} + 49152t^{12} + 8192t^{10} + 58880t^8 + 512t^6 + 192t^4 - 32t^2 + 1)^3}{256t^8(2t-1)^8(2t+1)^8 (4t^2-4t-1)^2(4t^2+1)^4(4t^2+4t-1)^2}$\\
$\ZZ/12\ZZ \oplus \ZZ/12\ZZ $ &  $\frac{(t^3 - 15t^2 - 33t + 955)^3(t^3 - 15t^2 + 75t - 233)^3}{729(t-11)^3(t-8)^6(t+1)^3}$   \\
$\ZZ/12\ZZ \oplus \ZZ/24\ZZ $ &   $\{8000\}$  \\
$\ZZ/16\ZZ \oplus \ZZ/16\ZZ $ & $\frac{(t^4 - 2t^3 + 2t^2 + 2t + 1)^3(t^4 + 2t^3 + 2t^2 - 2t + 1)^3}{(t - 1)^4t^4(t + 1)^4(t^2 + 1)^4}$    \\\hline
\end{tabular}
\end{center}
\caption{Parameterizations $j(t)$ of the $\Qbar$-isomorphism classes of elliptic curves $E/\QQ$ according to isomorphism type of $E(\QQ(D_4^\infty))_\tor$.}  \label{tab:jMaps}
\end{table}

\begin{thm}\label{thm:jmaps}
Let $E/\QQ$ be an elliptic curve with $j(E) \neq 0$. Then, the set $\mathcal{T}(E)$ has a unique maximal element $T(E)$ with respect to the partial order on $\mathcal{T}$, and $E(\QQ(D_4^\infty))_\tor$ is isomorphic to $T(E)$. 
\end{thm}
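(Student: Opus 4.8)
The plan is to reduce the theorem to a short order-theoretic observation, after isolating its one substantive input: the correctness of Table \ref{tab:jMaps}, discussed in the final paragraph. Write $G := E(\QQ(D_4^\infty))_\tor$. By Proposition \ref{prop:twist} the isomorphism type of $G$ depends only on $j(E)$ (here we use $j(E)\neq 0$). Granting that each $F_T$ parameterizes exactly the $j$-invariants of rational curves whose torsion over $\QQ(D_4^\infty)$ contains a subgroup isomorphic to $T$, we obtain the key equivalence
\[
T \in \mathcal{T}(E) \iff G \text{ contains a subgroup isomorphic to } T,
\]
valid for every $T \in \mathcal{T}$; equivalently, $\mathcal{T}(E) = \{\, T \in \mathcal{T} : T \leq G \,\}$.

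With this identification both assertions of the theorem become formal. The relation $\leq$ is a genuine partial order on isomorphism classes of finite groups: it is reflexive and transitive, and antisymmetric because $T_1\hookrightarrow T_2$ and $T_2\hookrightarrow T_1$ force $|T_1|=|T_2|$ and hence $T_1\simeq T_2$. By Theorem \ref{thm:main}, $G$ is one of the $24$ groups comprising $\mathcal{T}$, so $G\in\mathcal{T}$; since $G$ embeds into itself we have $G\in\mathcal{T}(E)$, while by the equivalence every $T\in\mathcal{T}(E)$ satisfies $T\leq G$. Thus $G$ is a greatest element of the finite poset $\mathcal{T}(E)$, and a greatest element is always the unique maximal element. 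Hence $\mathcal{T}(E)$ has a unique maximal element $T(E)$ and $T(E)=G=E(\QQ(D_4^\infty))_\tor$, which is exactly the claim.

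The entire content of the proof therefore lies in the displayed equivalence, i.e.\ in verifying that Table \ref{tab:jMaps} parameterizes, for each $T$, exactly the $\Qbar$-isomorphism classes of curves whose torsion over $\QQ(D_4^\infty)$ contains a copy of $T$. For the single-prime structures this is precisely what Propositions \ref{prop:D4Classification13}, \ref{prop:D4Classification7}, \ref{prop:D4Classification5}, \ref{prop:D4Classification3}, and \ref{prop:D4Classification2} establish, the maximal $2$-primary pieces being read off Table \ref{tab:RZBData}. For the mixed structures — such as $\ZZ/4\ZZ\oplus\ZZ/12\ZZ$, $\ZZ/4\ZZ\oplus\ZZ/20\ZZ$, and $\ZZ/8\ZZ\oplus\ZZ/24\ZZ$ — each $j(t)$ arises as a rational parameterization of the relevant component of the fiber product of the corresponding single-prime $j$-maps; that such a genus-$0$ component exists at all is governed by the determination in Section \ref{sec:TorsionStructures} of which $p$-primary components may coexist. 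The three finitely occurring structures $\ZZ/15\ZZ$, $\ZZ/3\ZZ\oplus\ZZ/15\ZZ$, and $\ZZ/12\ZZ\oplus\ZZ/24\ZZ$ are instead recorded as explicit finite lists of $j$-invariants. The main obstacle is accordingly not conceptual but the bookkeeping required to assemble these fiber-product parameterizations and to certify that each image contains no spurious $j$-values and omits none; the locus $j(E)=0$ and the CM curves are handled separately by Lemma \ref{lem:j=0} and Table \ref{tab:CMTor}. Once the table is so certified, the order-theoretic argument above completes the proof.
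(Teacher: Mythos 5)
Your order-theoretic skeleton is sound, but the ``key equivalence'' on which you hang everything is false, and the paper itself says so: the remark immediately following Theorem \ref{thm:jmaps} exhibits the curve \texttt{338e1}, whose torsion over $\QQ(D_4^\infty)$ is $\ZZ/3\ZZ\oplus\ZZ/3\ZZ$ (hence contains a subgroup isomorphic to $\ZZ/3\ZZ$), yet whose $j$-invariant is \emph{not} in the image of the function listed for $T=\ZZ/3\ZZ$ in Table \ref{tab:jMaps}, because that function parameterizes curves with a rational $3$-isogeny and \texttt{338e1} has none (its mod $3$ image is the normalizer of the split Cartan subgroup). So $\mathcal{T}(E)$ is \emph{not} equal to $\{T\in\mathcal{T} : T\leq E(\QQ(D_4^\infty))_\tor\}$; it is not even downward closed. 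Consequently the program you describe --- certifying that each image in Table \ref{tab:jMaps} ``contains no spurious $j$-values and omits none'' --- cannot be carried out: the ``omits none'' half (completeness of each $F_T$ for every curve whose torsion merely \emph{contains} $T$) is precisely what fails, and the literal ``exactly when'' phrasing of Proposition \ref{prop:D4Classification3} cannot be invoked to support it, since \texttt{338e1} contradicts that literal reading as well.

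What your argument actually needs is weaker, and this is how the paper (deferring to the proof of Theorem 7.1 of \cite{Q(3)}) proceeds: (i) \emph{soundness} for every $T$ --- if $j(E)$ lies in the image of some function in $F_T$, then $E(\QQ(D_4^\infty))_\tor$ contains a subgroup isomorphic to $T$ --- together with (ii) \emph{realization at the top} --- for $G := E(\QQ(D_4^\infty))_\tor$ itself, $j(E)$ lies in the image of some function in $F_G$, i.e.\ $G\in\mathcal{T}(E)$. These two facts make $G$ a greatest, hence the unique maximal, element of $\mathcal{T}(E)$, which is exactly your closing argument. But (ii) is not bookkeeping deducible from an exact parameterization; it is the substantive output of Sections \ref{sec:pPrimary} and \ref{sec:TorsionStructures}. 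For instance, when the $3$-primary component of $G$ is cyclic and nontrivial, one needs the dichotomy of Corollary \ref{cor:IsogFreeTorsion} (a nontrivial $3$-primary part forces either a rational $3$-isogeny or full $3$-torsion) to conclude that $E$ has a $3$-isogeny and hence that $j(E)$ lies in the image of the $X_0(3)$ map; it is only for the group that the torsion actually equals that membership in the table is guaranteed. Your proposal is repairable along these lines, but as written it asserts, relies on, and proposes to verify a statement that is false.
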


\begin{remark}
The set $\mathcal{T}(E)$ need not contain every $T \leq T(E)$.  For example, the curve \href{http://www.lmfdb.org/EllipticCurve/Q/338e1}{\texttt{338e1}} has $T(E) = \ZZ/3\ZZ \oplus \ZZ/3\ZZ$, but $j(E)$ is not in the image of the unique function $j(t)$ for $\ZZ/3\ZZ$ since it does not have a rational 3-isogeny. 
\end{remark}

The proof of Theorem \ref{thm:jmaps} follows from the exact same argument that is presented in the proof of \cite[Theorem 7.1]{Q(3)} changing a few minor details. For the sake of brevity, we omit its proof here and instead finish by justifying the $j$-maps that appear in Table \ref{tab:jMaps}.
\begin{itemize}
\item Justification for any of the functions for a torsion structure $T$ of the form $\ZZ/p^k\ZZ \oplus \ZZ/p^j\ZZ$, can be found in Section \ref{sec:pPrimary}.
\item $\ZZ/15\ZZ$: In section \ref{sec:pPrimary}, we saw that in order for $E/\QQ$ to have a point of order 3 nd not full level 3 structure or 5 defined over $\QQ(D_4^\infty)$ it is necessary and sufficient for $E$ to have a rational 3- or 5-isogeny respectively. Thus, $E(\QQ(D_4^\infty))_\tor$ is isomorphic to $\ZZ/15\ZZ$ exactly when $E$ has a 15-isogeny and there are exatly 4 $\Qbar$-isomorphism classes such curves. See \cite[Table 4]{lozano1}.
\item $\ZZ/3\ZZ\oplus\ZZ/15\ZZ$: In Section \ref{sec:pPrimary} that $E/\QQ$ has its full 3-torsion defined over $\QQ(D_4^\infty)$ exactly when the mod 3 Galois representation associated to $E$ has its image contained in the normalizer of a nonsplit Cartan subgroup. Thus, $\mathcal{T}(E) = \ZZ/3\ZZ \oplus\ZZ/15\ZZ$ if and only if $\Im\rho_{E,3}$ is conjugate to a subgroup of the normalizer of the split Cartan subgroup of $\GL_2(\ZZ/3\ZZ)$ and $E$ has a 5-isogeny. We can construct the curve $X$ that parameterizes such elliptic curve as the fiber product of the modular curves $X^+_{s}(3)$ and $X_0(5)$. The resulting curve is a genus 1 singular curve whose desingularization is the elliptic curve \href{http://www.lmfdb.org/EllipticCurve/Q/15a3}{\texttt{15a3}}. The curve \href{http://www.lmfdb.org/EllipticCurve/Q/15a3}{\texttt{15a3}} has rank zero and torsion group $\ZZ/2\ZZ\oplus \ZZ/4\ZZ$ over $\QQ$. Computing the pullback of these 8 points back to $X$ in Magma and then applying the $j$-maps from $X$ to $\PP^1$ we see that there are only 2 possible $j$-invariant which are listed in Table \ref{tab:jMaps}.
\item $\ZZ/4\ZZ\oplus\ZZ/12\ZZ$: In Section \ref{sec:pPrimary} we showed that for each of these $p$-primary components to occur it is necessary and sufficient for $E$ to have a rational point of order 2 and a rational 3-isogeny. This is the same as saying that $E$ has a rational 6-isogeny since having a point of order 2 is equivalent to have a 2-isogeny. These curves are parameterized by the genus zero curve $X_0(6)$ whose $j$-map has been taken from \cite[Table 3]{lozano1}.
\item $\ZZ/4\ZZ\oplus\ZZ/20\ZZ$: As in the previous case, Section \ref{sec:pPrimary} shows that these two $p$-primary components occur exactly when $E$ has a rational point of order 2 and a 5-isogeny. Again, this is equivalent to $E/\QQ$ having a 10-isogeny and such curves are parameterized by $X_0(10)$ whose $j$-map is taken from \cite[Table 3]{lozano1}.
\item $\ZZ/4\ZZ\oplus\ZZ/24\ZZ$: This time there are 2 possible ways for this torsion structure to occur and both of them require that $E$ has a 3-isogeny. The two possibilities correspond to the two distinct ways for $E$ to have 2-primary component $\ZZ/4\ZZ\oplus\ZZ/8\ZZ$ (see Table \ref{tab:RZBData}). 
\begin{itemize}
\item The first possibility is that $E$ has a 2-isogeny and that the discriminant of $E$ is equivalent to the discriminant of the 2-isogenous curve modulo squares. Starting with a generic elliptic curve with a 6-isogeny in Magma and computing the 2-isogenous curve we see that there are no elliptic curves over $\QQ$ with a 6-isogeny such that its discriminant is equivalent to that of its two isogenous curve modulo squares.
\item The second possibility is that $E$ has a 4-isogeny. Combining this with a 3-isogeny, $E$ has torsion structure $\ZZ/4\ZZ\oplus\ZZ/24\ZZ$ if and only if $E$ has a 12-isogeny. Again these curves are parameterized by the genus 0 modular curve $X_0(12)$ whose $j$-map is taken from \cite[Table 3]{lozano1}.
\end{itemize}
\item $\ZZ/8\ZZ\oplus\ZZ/24\ZZ$: Again, there are two distinct ways that $E$ can have its full 8-torsion defined over $\QQ(D_4^\infty)$ (see Table \ref{tab:RZBData}) so this breaks into two distinct cases. 
\begin{itemize}
\item The first possibility is that $E$ has a 2-isogeny and the discriminant of $E$ is equivalent to the \emph{negative} of the discriminant of its two isogenous curve. Just as before, we start with a generic elliptic curve with a 6-isogeny and see that it is impossible for such an elliptic curve to be defined over $\QQ$. 
\item The second possibility is that $E$ has its full 2-torsion defined over $\QQ$. In this case, any twist of $E$ also will have its full 2-torsion defined over $\QQ$ and since any elliptic curve with a 3-isogeny has a quadratic twist with a rational point of order $3$, we see that in this case $E$ must have a quadratic twist with torsion subgroup isomorphic to $\ZZ/2\ZZ\oplus\ZZ/6\ZZ$. These curves have been completely parametrized and the $j$-map has been taken from \cite[Figure 2]{lozano2}.
\end{itemize}
\item $\ZZ/12\ZZ\oplus\ZZ/12\ZZ$: This can only occur if $E$ has a point of order 2 defined over $\QQ$ and $E$ is nonsplit at 3. Constructing the modular curve $X$ that parameterizes such elliptic curves by considering the fiber product of the genus 0 modular curves $X_1(2)$ and $X_{s}^+(3)$ we see that the resulting curve has genus 0. The $j$-map of this curve is computed using Magma. \item $\ZZ/12\ZZ\oplus\ZZ/24\ZZ$: Once again there are two cases to consider corresponding to the two distinct ways that $E$ can have 2-primary component $\ZZ/4\ZZ\oplus\ZZ/8\ZZ$. Constructing the fiber product of the two corresponding curves with $X_{ns}^+(3)$ we get two different genus 1 modular curves. In total, these two curves only contain 1 nonsingular and noncuspidal point defined over $\QQ$. This point corresponds to the elliptic curve with $j = 8000$.
\end{itemize}

\end{document}